\documentclass[psamsfonts]{amsart}
\usepackage{amssymb}
\usepackage{graphicx,color}
\usepackage{amssymb,amsfonts}
\usepackage[all,arc]{xy}
\usepackage{enumerate}
\usepackage{mathrsfs}

\newtheorem{thm}{Theorem}[section]
\newtheorem{lemma}[thm]{Lemma}
\newtheorem{prop}[thm]{Proposition}
\newtheorem{cor}[thm]{Corollary}

\theoremstyle{definition}

\theoremstyle{remark}
\newtheorem{rem}[thm]{Remark}
\makeatletter
\let\c@equation\c@thm
\makeatother
\numberwithin{equation}{section}

\bibliographystyle{plain}

\title[Hypersurfaces of Prescribed Curvature and Boundary ]{Strictly Locally Convex Hypersurfaces with Prescribed Curvature and Boundary in Space Forms}

\author{Zhenan Sui}


\address[Zhenan Sui]{Institute for Advanced Study in Mathematics of HIT, Harbin Institute of Technology}

\begin{document}

\begin{abstract}
This paper is devoted to $C^2$ a priori estimates for strictly locally convex radial graphs with prescribed Weingarten curvature and boundary in space forms. By constructing two-step continuity process and applying degree theory arguments,  existence results in space forms are established for prescribed Gauss curvature equation under the assumption of a strictly locally convex subsolution.
\end{abstract}

\maketitle


\section {\large Introduction}

\vspace{4mm}

In $(n+1)$-dimensional space form $N^{n+1}(K)$ with $n \geq 2$, given a disjoint collection $\Gamma = \{ \Gamma_1, \ldots, \Gamma_m \}$
of smooth closed embedded $(n - 1)$-dimensional submanifolds, and a smooth positive function $\psi$, we are concerned with the general Plateau type problem for strictly locally convex hypersurfaces $\Sigma$ determined by the curvature equation
\begin{equation}  \label{eq1-0}
\sigma_k (\kappa [ \Sigma ]) = \psi(V, \,\nu)
\end{equation}
as well as the boundary condition
\begin{equation} \label{eq1-2}
\partial \Sigma = \Gamma
\end{equation}
where $\kappa [\Sigma] = (\kappa_1, \ldots, \kappa_n)$ denotes the principal curvatures of $\Sigma$, $V$ is a conformal Killing field which will be specified later, $\nu$ is the unit outer normal field to $\Sigma$, and
\[\sigma_k (\lambda) = \sum\limits_{1 \leq i_1 < \ldots < i_k \leq n} \,\lambda_{i_1} \cdots \lambda_{i_k}\]
is the $k$-th elementary symmetric function defined on $k$-th G\r arding's cone
\[ \Gamma_k = \{ (\lambda_1, \ldots, \lambda_n) \in \mathbb{R}^n \,\vert \, \sigma_j (\lambda) > 0,\,\, j = 1, \ldots, k \} \]
$\sigma_k (\kappa [\Sigma])$ is called the $k$-th Weingarten curvature of $\Sigma$. In particular, the $1$st, $2$nd and $n$-th Weingarten curvature are the well known mean curvature, scalar curvature and Gauss curvature respectively. We say $\Sigma$ is strictly locally convex if its principal curvatures are all positive everywhere in $\Sigma$, and $\Sigma$ is $k$-admissible if $\kappa [ \Sigma ] \in \Gamma_k$.

The space form $N^{n+1}(K)$ with constant sectional curvature $K = 0$, $1$ or $- 1$ can be modeled as follows (see for example \cite{SX15}). In $\mathbb{R}^{n + 1}$, fix the origin $0$ and let $\mathbb{S}^n$ denote the unit sphere centered at $0$. Choose the spherical coordinates $(z, \rho)$ in $\mathbb{R}^{n + 1}$ with $z \in \mathbb{S}^n$. Define
\[ \bar{g} : = d \rho^2 + \phi^2(\rho)\, \sigma \]
where $\sigma$ is the standard metric on $\mathbb{S}^n$ induced from $\mathbb{R}^{n + 1}$ and
\begin{equation*}
\phi(\rho) = \left\{\begin{aligned} \rho, \quad\,\, \mbox{on} \,\,\, [ \,0, \infty) \\
\sin(\rho), \quad\,\, \mbox{on} \,\,\, [ \,0, \frac{\pi}{2}) \\
\sinh(\rho), \quad\,\, \mbox{on} \,\,\, [ \,0, \infty) \end{aligned}\right.
\end{equation*}
Then $(\mathbb{R}^{n + 1}, \bar{g})$ is a model of $N^{n + 1}(K)$, which is $\mathbb{R}^{n + 1}$, $\mathbb{S}_{+}^{n + 1}$ or $\mathbb{H}^{n + 1}$ depending on $K = 0$, $1$ or $ - 1$. Let
$V = \phi(\rho) \,\frac{\partial}{\partial \rho}$.
It is well know that $V$  is a conformal Killing field in $ N^{n+1}(K)$. In Euclidean space $\mathbb{R}^{n + 1}$, it is just the position vector field.

For starshaped compact hypersurfaces, the early influential work includes \cite{Oliker, CNSIV} for Euclidean space,  Jin-Li \cite{JL05} for hyperbolic space, \cite{BLO, LO02} for elliptic space. If $\psi$ is allowed to depend arbitrarily on $\nu$,
the most current breakthrough is due to Guan-Ren-Wang \cite{GRW15}, where the authors studied Weingarten curvature in Euclidean space (see also Spruck-Xiao \cite{SX15} for scalar curvature in space forms and Chen-Li-Wang \cite{CLW18} for Weingarten curvature in warped product spaces).

For Dirichlet problem in $\mathbb{R}^{n+1}$, Caffarelli-Nirenberg-Spruck \cite{CNSV} initiated  the study of vertical graphs over strictly convex domains in $\mathbb{R}^n$. Later, Guan-Spruck \cite{GS93} studied radial graphs in $\mathbb{R}^{n+1}$ of constant Gauss-Kronecker curvature, where they removed the convexity assumption of the domain, but instead proposed a subsolution condition. This subsolution assumption is later widely used when discussing Dirichlet boundary value problems for general curvature equations (as well as Hessian type equations), see for instance \cite{Guan95, Guan98, GS04, Su16, Cruz}.

A strictly locally convex hypersurface with boundary may not be convex globally; it locally lies on one side of its tangent plane at any point, which may be very complicated in general.
In this paper, we focus on those that can be represented as radial graphs over some domain in $\mathbb{S}^n$. Therefore we assume $\Gamma$ to be the boundary of a smooth positive radial graph $\varphi$ defined on a smooth domain $\Omega \subset \mathbb{S}^n$, i.e., $\Gamma = \{ (z, \varphi(z))\, | z \in \partial \Omega \}$.  We seek a smooth strictly locally convex radial graph $\Sigma = \{ (z, \rho(z))\, | z \in \Omega \}$ satisfying the Dirichlet problem
\begin{equation} \label{eq1-1}
\left\{\begin{aligned}
\sigma_k(\kappa [ \rho ])\, = \,\,&  \psi(z, \rho, \nabla' \rho) \quad & \mbox{in} \quad \Omega
\\
\rho \, = \,\, & \varphi \quad \quad & \mbox{on} \quad \partial \Omega
\end{aligned}\right.
\end{equation}
where $\nabla'$ is the Levi-Civita connection on $\mathbb{S}^n$ with respect to $\sigma$ and we use the same $\psi$ for the right hand side.

The first main result in this paper is the following $C^2$ estimate, which
is a crucial step for proving existence and higher order regularity of solutions.

\begin{thm} \label{Theorem1-1}
In space form $N^{n+1}(K)$, suppose that
\begin{equation} \label{eq1-9}
\Omega \,\,\, \mbox{does}\,\,\mbox{not}\,\,\mbox{contain}\,\,\mbox{any}\,\,\mbox{hemisphere}
\end{equation}
and $\Gamma$ can span a positive radial graph $\overline{\rho} \in C^2 (\overline{\Omega})$ in $N^{n + 1}(K)$  which is strictly locally convex in a neighborhood of $\Gamma$. Then for any strictly locally convex  radial graph $\rho \in C^4(\Omega) \cap C^2(\overline{\Omega})$ satisfying \eqref{eq1-1}  with $\rho \leq \overline{\rho}$ in $\Omega$, we have
\[ \Vert \rho \Vert_{C^2(\overline{\Omega})} \,\leq \, C    \]
where $C$ depends only on $n$, $k$, $\Omega$, $\Vert\psi\Vert_{C^2}$, $\Vert \overline{\rho}\Vert_{C^1(\overline{\Omega})}$,  $\Vert \varphi \Vert_{C^4(\overline{\Omega})}$, $\inf \psi$, $\inf_{\partial\Omega}\overline{\rho}$ and the convexity of $\overline{\rho}$.
\end{thm}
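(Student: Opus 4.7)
The plan is to follow the classical three-step strategy ($C^0$, $C^1$, $C^2$) adapted to radial graphs in the space form $N^{n+1}(K)$, exploiting the conformal Killing structure of $V = \phi(\rho)\partial_\rho$ throughout. Since $\rho \leq \overline{\rho}$ is hypothesis, the $C^0$ estimate reduces to a lower bound. I would test at an interior minimum $p_0$ of $\rho$, where $\nabla'\rho(p_0) = 0$ and $\nabla'^2\rho(p_0) \geq 0$ translate, via the standard formula for the second fundamental form of a radial graph, into an upper bound on the shape operator of $\Sigma$ at the corresponding point. Because strict local convexity forces all $\kappa_i > 0$, the equation $\sigma_k(\kappa) = \psi \leq \sup\psi$ combined with the inequality $\sigma_k \geq c_{n,k}\kappa_{\min}^k$ yields a positive lower bound for $\rho(p_0)$; together with $\rho = \varphi \geq \inf_{\partial\Omega}\overline{\rho}$ on $\partial\Omega$, this gives the uniform $C^0$ lower bound.

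For the $C^1$ estimate I would introduce the height function $u = g(\rho)$ with $g' = 1/\phi$ to linearize the ambient warping, then apply the maximum principle to an auxiliary function of the form $w = \log(1 + |\nabla' u|^2) + \mu(u)$ with $\mu$ adjusted to the sign of $K$. The hypothesis that $\Omega$ does not contain any hemisphere is used here: it allows one to fix a linear function on $\mathbb{S}^n$ that is bounded away from zero on $\overline{\Omega}$, producing the good sign needed to close the maximum principle argument. The boundary portion of the gradient bound follows from $\rho \leq \overline{\rho}$ together with $\rho = \varphi = \overline{\rho}\vert_{\partial\Omega}$, which controls the normal derivative on $\partial\Omega$.

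The $C^2$ estimate reduces to an upper bound for the largest principal curvature $\kappa_{\max}$, since strict local convexity controls the shape operator from below and the formula for $h_{ij}$ in terms of $\rho$, $\nabla'\rho$, $\nabla'^2\rho$ then bounds $\|\nabla'^2\rho\|_{C^0}$. For the boundary $C^2$ estimate I would fix $p_0 \in \partial\Omega$ and choose tangent/normal coordinates. The pure tangential-tangential second derivatives are controlled by differentiating $\rho = \varphi$ twice along $\partial\Omega$ together with the convexity of $\overline{\rho}$ near $\Gamma$. The mixed tangential-normal derivatives are handled by applying the linearized operator $F^{ij}\partial_{ij}$ to a barrier of the form $\pm (\overline{\rho} - \rho) + A\,d + B\,d^2$, where $d$ is the distance to $\partial\Omega$; here one exploits $\rho \leq \overline{\rho}$, the concavity of $F = \sigma_k^{1/k}$ on $\Gamma_k$, and the previously established $C^1$ bounds. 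The double-normal derivative is the most delicate: one argues by contradiction, showing that if $h_{NN}(p_0)$ were arbitrarily large then the positivity of the $(n-1)\times(n-1)$ tangential block (guaranteed by the convexity of $\overline{\rho}$ near $\Gamma$) together with Garding's inequality would force $\sigma_k(h) > \sup \psi$.

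For the interior $C^2$ estimate, I would apply the maximum principle to a test function of the type $W = \log \kappa_{\max} + \Phi(\rho, |\nabla'\rho|^2)$ with $\Phi$ tuned to dominate the error terms; at an interior maximum of $W$, differentiating the equation twice and invoking the concavity of $F = \sigma_k^{1/k}$ together with the Codazzi and Gauss equations in $N^{n+1}(K)$ (which contribute controlled lower order terms involving $K$) yields the desired bound on $\kappa_{\max}$. I expect two main obstacles. The first, and the main one, is the double-normal boundary estimate, where the quantitative convexity of $\overline{\rho}$ near $\Gamma$ is essential and the barrier construction must be executed carefully to respect the space-form geometry. The second is handling the general $\psi(V, \nu)$ dependence in the interior estimate, where the $\nu$-derivative of $\psi$ generates third-order terms that do not obviously cancel; following Guan-Ren-Wang \cite{GRW15}, one absorbs these by a judicious choice of $\Phi$ that generates a matching good term from the concavity inequality.
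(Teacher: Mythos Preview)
Your $C^0$ lower bound argument does not work. At an interior minimum of $\rho$ one has $\nabla'\rho = 0$ and $\nabla'^2\rho \geq 0$, which via the second fundamental form formula $h_{ij} = -\nabla'_{ij}\rho + \phi\phi'\delta_{ij}$ (at that point) yields $\kappa_i \leq \phi'/\phi$. Feeding this into $\sigma_k(\kappa) = \psi \geq \inf\psi$ gives $\phi'/\phi \geq c$, which for each of $K = 0, 1, -1$ is an \emph{upper} bound on $\rho$, not a lower bound; the upper bound is already free from $\rho \leq \overline{\rho}$. The paper obtains the lower bound by a completely different mechanism: after the change of variable $\rho = \zeta(u)$ (so that small $\rho$ corresponds to large $u$ and strict local convexity becomes $\nabla'_{ij}u + u\,\delta_{ij} > 0$), one restricts to a geodesic from an interior maximum of $u$ to $\partial\Omega$ and integrates the resulting ODE inequality $u'' + u > 0$. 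This is precisely where the hemisphere hypothesis \eqref{eq1-9} enters: it guarantees the geodesic has length $l \leq \tfrac{\pi}{2} - \epsilon$, so that $((u/\cos s)'\cos^2 s)' > 0$ gives $u(P) \leq u(Q)/\cos l$. You have misplaced this hypothesis into the $C^1$ step; in the paper the interior gradient bound is in fact a one-line consequence of convexity (the function $\sqrt{u^2 + |\nabla' u|^2}$ can have an interior maximum only where $\nabla' u = 0$).

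Your double-normal sketch also hides a genuine difficulty. You assert that the positivity of the tangential $(n-1)\times(n-1)$ block is ``guaranteed by the convexity of $\overline{\rho}$ near $\Gamma$'', but what is needed is a uniform quantitative lower bound on $\mbox{dist}(\kappa'[u], \partial\Gamma'_{k-1})$ for the \emph{solution} $u$, not for $\overline{\rho}$. The paper establishes this by the Caffarelli--Nirenberg--Spruck device: one minimizes $\tilde{d}(z) = \tfrac{w}{-\zeta'\phi}\,\mbox{dist}(\kappa'[u](z),\partial\Gamma'_{k-1})$ over $\partial\Omega$, uses the convexity of $\underline{u}$ to extract a positive lower bound on a weighted Christoffel sum, and then runs a second barrier argument with the auxiliary function $\Phi$ to bound $\nabla'_{nn}u$ at the minimizing point $z_1$; only then does the asymptotic expansion of $\kappa$ in terms of $\kappa'$ and $h_{nn}$ close the contradiction. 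Without this step, your contradiction argument does not get off the ground for general $k$.
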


When $k = n$, $C^2$ estimates have been derived in $\mathbb{R}^{n+1}$ by \cite{Su16, GS93, Guan95} and in $\mathbb{S}^{n+1}_+$ by Lim \cite{Lim}, while for all $k$ when $\psi$ does not depend on $\nu$ in $\mathbb{R}^{n+1}$  by Cruz \cite{Cruz}. In \cite{GRW15}, Guan-Ren-Wang solved the longstanding problem on global $C^2$ estimates for convex hypersurfaces in $\mathbb{R}^{n+1}$ subject to equation \eqref{eq1-0}. They also removed the convexity assumption and instead considered starshaped $2$-admissible hypersurfaces in the case $k = 2$, the proof of which was later simplified by Spruck-Xiao \cite{SX15} in space forms. On the other hand, \cite{GRW15} provides counterexamples to indicate that global $C^2$ estimates for general $\psi$ depending on both $V$ and $\nu$ do not hold for curvature quotient equations. In this paper, we extend the estimates in \cite{GRW15}
to space forms. For $C^2$ boundary estimates, it is necessary in Theorem \ref{Theorem1-1} to assume $\overline{\rho}$ to be strictly locally convex near its boundary, for otherwise there are topological obstructions to the existence of strictly locally convex hypersurfaces spanning a given $\Gamma$ (see \cite{Ro93}); besides, the convexity assumption on the prescribed hypersurfaces can not be weakened, even for the case $k = 2$, or when $\psi$ does not depend on $\nu$ (see section 3).
The significance of Theorem \ref{Theorem1-1} lies in the arbitrary dependence of $\psi$ on $\nu$ for all $k$ as well as a unified approach in different space forms via change of variable for Plateau type problems.

To establish existence results, we confine ourselves to prescribed Gauss curvature equation, i.e. the case $k = n$, because for general Weingarten curvature equation, a positive lower bound of principal curvatures may not be obtained and the convexity may not be preserved during the continuity process. Applying Theorem \ref{Theorem1-1}, we can prove the following existence results.

\begin{thm} \label{Theorem1-2}
Under condition \eqref{eq1-9}, assume that there exists a positive strictly locally convex radial graph  $\overline{\rho} \in C^2 (\overline{\Omega})$ in $N^{n+1}(K)$ satisfying
\begin{equation} \label{eq1-4}
\left\{ \begin{aligned}
\sigma_n(\kappa [ \overline{\rho} ]) & \, \geq \, \psi( z, \overline{\rho}, \nabla' \overline{\rho}) \quad & \mbox{in} \quad \Omega
\\
\overline{\rho} & \, =  \,  \varphi \quad\quad & \mbox{on} \quad \partial \Omega
\end{aligned} \right.
\end{equation}
Then there exists a smooth strictly locally convex radial graph $\Sigma = \{ (z, \rho(z))\,\vert\, z \in \Omega \} \subset N^{n+1}(K)$ satisfying the Dirichlet problem \eqref{eq1-1} for $k = n$ with $\rho \leq \overline{\rho}$ in $\overline{\Omega}$ and uniformly bounded principal curvatures
\[ 0 < K_0^{-1} \leq \kappa_i \leq K_0 \quad \mbox{on} \quad \Sigma, \]
where $K_0$ is a uniform positive constant depending only on $n$, $\Omega$, $\Vert\psi\Vert_{C^2}$, $\Vert \overline{\rho}\Vert_{C^1(\overline{\Omega})}$,  $\Vert \varphi \Vert_{C^4(\overline{\Omega})}$, $\inf \psi$, $\inf_{\partial\Omega}\overline{\rho}$ and the convexity of $\overline{\rho}$.
\end{thm}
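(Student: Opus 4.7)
\emph{Plan.} The plan is to combine the $C^2$ estimate of Theorem \ref{Theorem1-1} with a regularity bootstrap and a two-step continuity/degree argument, exploiting the fact that $k=n$ automatically provides a positive lower bound on the principal curvatures.

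\emph{From $C^2$ to smoothness.} When $k=n$, equation \eqref{eq1-1} reads $\kappa_1\cdots\kappa_n=\psi$. Theorem \ref{Theorem1-1} supplies $\kappa_i\le K_0$, and the identity $\kappa_j=\psi/\prod_{i\ne j}\kappa_i$ together with $\psi\ge\inf\psi>0$ then forces a two-sided bound
\[
0<K_0^{-(n-1)}\inf\psi \;\leq\; \kappa_i\;\leq\; K_0,
\]
so the linearization of $\sigma_n$ is uniformly elliptic along any strictly locally convex solution with $\rho\le\overline{\rho}$. Since $\log\sigma_n$ is concave on the positive cone, Evans--Krylov yields a uniform $C^{2,\alpha}$ estimate, and Schauder theory bootstraps this to $C^{k,\alpha}$ for every $k$, hence to $C^\infty$.

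\emph{Two-step continuity and degree.} I would embed \eqref{eq1-1} in a family of Dirichlet problems
\[
\sigma_n(\kappa[\rho])=\Psi_{s,t}(z,\rho,\nabla'\rho)\text{ in }\Omega,\qquad \rho=\varphi\text{ on }\partial\Omega,
\]
parametrized by $(s,t)\in[0,1]^2$, arranged so that: (i) $\Psi_{0,0}(z,\rho,p)=\sigma_n(\kappa[\overline{\rho}])(z)$, for which $\overline{\rho}$ is itself a solution; (ii) $\Psi_{1,1}=\psi$; (iii) $\inf\Psi_{s,t}>0$ and $\overline{\rho}$ remains a strictly locally convex subsolution for every $(s,t)$; and (iv) the hypotheses of Theorem \ref{Theorem1-1} hold uniformly in $(s,t)$. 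The first leg $s\colon 0\to1$ switches on the $(\rho,p)$-dependence while keeping the right-hand side independent of $\nu$; the second leg $t\colon 0\to 1$ activates the full dependence on $\nu$. The estimates above confine all solutions of the parametrized family to a relatively compact open subset of strictly locally convex graphs $\rho\le\overline{\rho}$ in $C^{4,\alpha}(\overline{\Omega})$. Applying Y.~Y.~Li's degree for fully nonlinear elliptic operators on this open set, homotopy invariance along the two legs transports the degree at $(0,0)$ --- which is $\pm 1$ since $\overline{\rho}$ is the unique solution there and its linearization is a uniformly elliptic invertible operator --- to the degree at $(1,1)$; nonvanishing of the latter yields the desired solution $\rho$.

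\emph{Main obstacle.} The delicate step is designing the path $\Psi_{s,t}$ so that strict local convexity, the subsolution property of $\overline{\rho}$, and a uniform lower bound $\inf\Psi_{s,t}\ge c_0>0$ are all preserved simultaneously. The equation $\sigma_n=\psi$ is elliptic only on the positive cone, so any homotopy that allows a principal curvature to approach zero leaves the regime in which Theorem \ref{Theorem1-1} delivers uniform estimates; and if the subsolution property of $\overline{\rho}$ fails at some intermediate $(s,t)$, the barrier $\rho\le\overline{\rho}$ and therefore the lower curvature bound are lost. This is also the reason the existence argument is restricted to the Gauss curvature case: for general $k$, no positive lower bound on individual principal curvatures is available along the continuity process, and convexity can degenerate.
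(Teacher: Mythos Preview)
Your regularity bootstrap and the lower curvature bound coming from $k=n$ are correct and match the paper. The genuine gap is the degree computation at the starting point. You assert that at $(s,t)=(0,0)$, with right-hand side $\sigma_n(\kappa[\overline\rho])(z)$ depending only on $z$, the solution $\overline\rho$ is unique and its linearization is invertible. This is precisely where radial-graph problems differ from vertical-graph problems, and the paper's Introduction flags it explicitly: ``The main issue in proving existence for radial graphs is due to the nontrivial kernel of the linearized operator, since continuity method can not be applied directly.'' Concretely, after the change of variable $u=\eta(v)$ the paper computes (proof of Lemma~\ref{Lemma6-1}) that
\[
\mathcal G_v \;=\; \frac{K}{w\,\eta'(v)}\sum_i f_i \;+\; \frac{\eta(v)}{\eta'(v)}\sum_i f_i\kappa_i ,
\]
so when the right-hand side carries no $v$-dependence the zeroth-order coefficient of the linearized operator is this $\mathcal G_v$, which for $K\ge 0$ is nonnegative. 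No sign condition for the maximum principle is available, and neither uniqueness of $\overline\rho$ nor invertibility of the linearization follows; your initial degree is therefore not $\pm 1$ by the argument you give.

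The paper's fix is to manufacture a starting equation whose right-hand side carries an artificial growth factor $\xi(v)$ (namely $e^{2v}$ for $K=0$ and $\sinh v$ for $K=-1$) chosen so that $\mathcal G_v - \psi(z)\,\xi'(v)<0$ (Lemma~\ref{Lemma6-1}); this forces invertibility and a comparison principle (Lemma~\ref{Lemma6-2}), after which a first continuity step produces a \emph{unique} solution (Theorem~\ref{Theorem6-1}) and a second step runs the degree argument to reach $\psi$ (Theorem~\ref{Theorem6-2}). In $\mathbb S^{n+1}_+$ no such $\xi$ works, so the paper instead deforms the ambient metric from Euclidean to spherical and anchors the degree computation in $\mathbb R^{n+1}$ (Section~6). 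Your ``main obstacle'' paragraph worries about preserving convexity and the subsolution along the path; that can indeed be arranged, but the harder obstruction is earlier: without a starting problem whose linearization is \emph{provably} invertible, the degree is never pinned down to begin with.
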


The existence results in $\mathbb{R}^{n+1}$ are proved in \cite{Su16, GS93, Guan95}. The main issue in proving existence for radial graphs is due to the nontrivial kernel of the linearized operator, since continuity method can not be applied directly. In \cite{GS93} for prescribed constant Gauss curvature, the authors used monotone iteration approach to overcome this difficulty and hence uniform $C^2$ estimates for the monotone sequence are conducted. Based on this result, in \cite{Guan95}, more $C^2$ estimates are derived for a wider class of auxiliary equations in order to obtain existence results for general $\psi$. In contrast, Su \cite{Su16} provided a more efficient way by observing that there exist auxiliary equations in $\mathbb{R}^{n+1}$ with invertible linearized operators which can be found after a change of variable. The author then constructed a two-step continuity process for applying continuity method and degree theory.

In this paper, we generalize Su's idea to $\mathbb{H}^{n+1}$.
In $\mathbb{S}^{n+1}_+$, however, there is no auxiliary equation with invertible linearized operator, neither can we apply monotone iteration approach since global $C^2$ estimates for the monotone sequence are too cumbersome to derive. In this paper, we create a new continuity process starting from an auxiliary equation in $\mathbb{R}^{n+1}$ with invertible linearized operator and ending up with our concerned equation in $\mathbb{S}^{n+1}_+$.
In more curved ambient spaces, this paper is not the first to study the Dirichlet problem for prescribed curvature equations. In fact, \cite{Neh98, Neh99} initiated the study in general Riemannian manifolds, where the author assumed the subsolution to be part of a special closed convex hypersurface.
Theorem \ref{Theorem1-2} is brand new in space forms in the sense that, the assumption of a strictly locally convex subsolution is much weaker than \cite{Neh98, Neh99}, and our proof is completely different and global, which is based on the geometry of space forms and the degree theory developed in \cite{Li89}.

This paper is organized as follows: in section 2, we reformulate equation \eqref{eq1-1} by change of variable in two different ways: \eqref{eq3-14} is designed for deriving $C^2$ boundary estimates in section 3 while \eqref{eq6-2} is for proving existence in section 5 and 6. Section 4 is devoted to global curvature estimates.

The author would like to thank Dr. Wei Sun for enlightening discussions. The author also thanks the reviewer for careful reading and insightful comments and suggestions, which resulted in improvement of Theorem \ref{Theorem1-1}. This work is supported by the grant (no. AUGA5710000618) from Harbin Institute of Technology.

\vspace{5mm}

\section{Strictly locally convex radial graphs in space forms}

\vspace{3mm}

Throughout this paper, we focus on hypersurface $\Sigma \subset N^{n + 1} (K)$ that can be represented as a radial graph over a smooth domain $\Omega \subset \mathbb{S}^n$, i.e. $\Sigma$ can be expressed as
\[ \Sigma  = \{ (z, \rho(z))\,| z \in \Omega \subset \mathbb{S}^n \} \]
The range for $\rho = \rho(z)$ is $ (0, \rho_U^K)$ where
\begin{equation} \label{eq3-15}
\rho_U^K = \left\{ \begin{aligned} & \infty,\quad\quad\mbox{if} \quad K = 0 \quad \mbox{or}\quad - 1 \\
& \frac{\pi}{2},\quad\quad\mbox{if} \quad K = 1
\end{aligned}
\right.
\end{equation}

First recall the related geometric objects on $\Sigma$. Let $e_1, \ldots, e_n$ be a local orthonormal frame on $\mathbb{S}^n$,
following the notations in \cite{SX15}, the induced metric, its inverse, unit normal, and second fundamental form on $\Sigma$ are given respectively by
\begin{equation} \label{eq3-1}
 g_{ij} = \phi^2\, \delta_{ij} + \rho_i \rho_j
\end{equation}
\begin{equation} \label{eq3-2}
 g^{ij} = \frac{1}{\phi^2} \big( \delta_{ij} - \frac{\rho_i \rho_j}{\phi^2 + |\nabla'\rho|^2} \big)
\end{equation}
\begin{equation} \label{eq3-4}
 \nu = \frac{- \nabla' \rho + \phi^2 \,\frac{\partial}{\partial \rho}}{\sqrt{\phi^4 + \phi^2 |\nabla' \rho|^2}}
\end{equation}
\begin{equation} \label{eq3-5}
 h_{ij} = \frac{\phi}{\sqrt{\phi^2 + |\nabla' \rho|^2}} \,\big( - \nabla'_{ij} \rho + \frac{2 \phi'}{\phi}\,\rho_i \rho_j + \phi \phi' \delta_{ij} \big)
\end{equation}
where $\rho_i = \rho_{e_i} = \nabla'_{e_i} \rho = \nabla'_{i} \rho $, $\rho_{ij} = \nabla'_{e_j} \nabla'_{e_i} \rho =  \nabla'_{e_j e_i} \rho = \nabla'_{j\,i} \rho$, etc. All other covariant derivatives are interpreted in this manner.
Thus $\nabla'\rho = \rho_k \, e_k$.

The principal curvatures $\kappa_1, \ldots, \kappa_n$ of the radial graph $\rho$ are the eigenvalues of the symmetric matrix $ \{ a_{ij} \}$:
\[ a_{ij} = \gamma^{i k} \,h_{k l}\, \gamma^{l j} \]
where $\{ \gamma^{ik} \}$ and its inverse $\{ \gamma_{ik} \}$ are given respectively by
\begin{equation} \label{eq3-6}
 \gamma^{ik} = \frac{1}{\phi} ( \delta_{ik} - \frac{\rho_i \,\rho_k}{\sqrt{\phi^2 + |\nabla' \rho|^2} ( \phi + \sqrt{\phi^2 + |\nabla' \rho|^2} )} )
\end{equation}
\begin{equation} \label{eq3-7}
\gamma_{ik} = \phi \,\delta_{ik} + \frac{\rho_i \rho_k}{ \phi + \sqrt{\phi^2 + |\nabla' \rho|^2} }
\end{equation}
In fact, $\{ \gamma_{ik} \}$ is the square root of the metric, i.e., $\gamma_{ik} \gamma_{kj} = g_{ij}$.

Obviously, $\Sigma$ is strictly locally convex
if and only if the symmetric matrix $\{ a_{ij }\}$ or $\{ h_{ij} \}$ is positive definite everywhere in $\Omega$.
For simplicity, we say a $C^2$ function $\rho$ is strictly locally convex if the hypersurface $\Sigma$ represented by $\rho$ is strictly locally convex.
Also, $a_{ij} > 0 $ (or $\geq 0$ ) means that the symmetric matrix $\{ a_{ij} \}$ is positive definite (or positive semi-definite); and $a_{ij} \geq b_{ij}$ means that the symmetric matrices $\{a_{ij}\}$ and $\{b_{ij}\}$ satisfy $ a_{ij} - b_{ij} \geq 0 $.

\vspace{5mm}

\subsection{Transformation for deriving a priori estimates}~

\vspace{3mm}

Now we change $\rho$ into $u$ for deriving $C^2$ boundary estimates in section 3.
Set
\begin{equation} \label{eq3-14}
\rho = \zeta ( u ) \, = \,\left\{ \begin{aligned} & \frac{1}{u},\quad\quad & \mbox{if} \quad  K = 0  \\
& \mbox{arccot}\, u,\quad\quad & \mbox{if} \quad  K = 1  \\
& \frac{1}{2} \ln \big( \frac{ u + 1 }{u - 1} \big) ,\quad\quad & \mbox{if} \quad  K = - 1
\end{aligned}
\right.
\end{equation}
According to \eqref{eq3-15}, the range for $u$ is  $(u_L^K, \infty)$ where
\begin{equation} \label{eq3-16}
u_L^K = \left\{ \begin{aligned} & 0,\quad\quad\mbox{if} \quad K = 0 \quad \mbox{or}\quad 1 \\
& 1,\quad\quad\mbox{if} \quad K = -1
\end{aligned}
\right.
\end{equation}
The formulas \eqref{eq3-1}, \eqref{eq3-2}, \eqref{eq3-6}, \eqref{eq3-7} and \eqref{eq3-5} can be expressed in terms of $u$,
\begin{equation} \label{eq3-8}
g_{ij} = \phi^2 \,\delta_{ij} + \zeta'^2(u)\, u_i u_j
\end{equation}

\begin{equation} \label{eq3-9}
g^{ij} = \frac{1}{\phi^2} \Big( \delta_{ij} - \frac{\zeta'^2(u) u_i u_j}{\phi^2 + \zeta'^2(u) |\nabla' u|^2} \Big)
\end{equation}

\begin{equation} \label{eq3-10}
\begin{aligned} 
\gamma^{ik} =  & \,\frac{1}{\phi}\, \Big( \delta_{ik} - \frac{ \zeta'^2(u) u_i u_k }{\sqrt{\phi^2 + \zeta'^2(u) |\nabla' u|^2 } ( \phi + \sqrt{\phi^2 + \zeta'^2(u) |\nabla' u|^2 } )}\Big) \\  = & \left\{ \begin{aligned} & u \,\big( \delta_{ik} - \frac{ u_i u_k }{\sqrt{u^2 +  |\nabla' u|^2 } ( u + \sqrt{ u^2 +  |\nabla' u|^2 } )} \big)  ,&\quad\mbox{if} \,\, K = 0  \\
& \sqrt{ 1 + u^2 } \,\big( \delta_{ik} - \frac{ u_i u_k }{\sqrt{1 + u^2 +  |\nabla' u|^2 } ( \sqrt{ 1 +  u^2 } + \sqrt{ 1 + u^2 +  |\nabla' u|^2 } )} \big)  ,&\quad\mbox{if} \,\, K = 1 \\
& \sqrt{ u^2 - 1 } \,\big( \delta_{ik} - \frac{ u_i u_k }{\sqrt{ u^2 - 1 +  |\nabla' u|^2 } ( \sqrt{ u^2 - 1 } + \sqrt{ u^2 - 1 +  |\nabla' u|^2 } )} \big) , &\quad\mbox{if} \,\, K = - 1
\end{aligned}
\right.
\end{aligned}
\end{equation}

\begin{equation} \label{eq3-11}
 \gamma_{ik} =  \phi \,\delta_{ik} + \frac{\zeta'^2(u) u_i u_k}{ \phi + \sqrt{\phi^2 + \zeta'^2(u) |\nabla' u|^2 } }
\end{equation}

\begin{equation} \label{eq3-13} 
\begin{aligned}
h_{ij} = \,& \frac{- \zeta'(u) \phi}{\sqrt{\phi^2 + \zeta'^2 |\nabla' u|^2}} ( \nabla'_{ij} u + u \,\delta_{ij} ) \\
       = \,& \left\{ \begin{aligned} & \frac{1}{ \sqrt{u^4 + u^2 |\nabla' u|^2}} (\nabla'_{ij} u + u \,\delta_{ij}),&\quad\mbox{if} \quad K = 0  \\
& \frac{1}{\sqrt{( 1 + u^2)^2 +  ( 1 + u^2 )  |\nabla' u|^2} } (\nabla'_{ij} u + u \,\delta_{ij}),&\quad\mbox{if} \quad K = 1 \\
& \frac{1}{\sqrt{ ( u^2 - 1 )^2 + ( u^2 - 1 ) |\nabla' u|^2} } (\nabla'_{ij} u + u \,\delta_{ij}), &\quad\mbox{if} \quad K = - 1
\end{aligned}
\right.
\end{aligned}
\end{equation}
Hence
\begin{equation} \label{eq3-18}
a_{ij} = \frac{- \zeta'(u) \phi}{\sqrt{\phi^2 + \zeta'^2 |\nabla' u|^2}} \gamma^{ik}\, ( \nabla'_{kl} u + u \,\delta_{kl} )\,\gamma^{lj}
\end{equation}
It is easy to see that  $\Sigma$ (or $u$) is strictly locally convex if and only if
\begin{equation} \label{eq3-17}
 \nabla'_{ij} u + \, u \,\delta_{ij}  > 0 \quad\mbox{in} \quad \Omega
\end{equation}

\vspace{5mm}

\subsection{ Transformation for proving existence }~

\vspace{3mm}

We further change $u$ into $v$ for proving existence in section 5 and 6.
Set
\begin{equation} \label{eq6-2}
u = \eta ( v ) \, = \,\left\{ \begin{aligned} & e^v,\quad\quad & \mbox{if} \quad  K = 0  \\
& \sinh v,\quad\quad & \mbox{if} \quad  K = 1  \\
& \cosh v,\quad\quad & \mbox{if} \quad  K = - 1
\end{aligned}
\right.
\end{equation}
According to \eqref{eq3-16}, the range for $v$ is $(v_L^K, \infty)$ where
\begin{equation} \label{eq6-19}
v_L^K = \left\{ \begin{aligned} & - \infty,\quad\quad &\mbox{if} \quad K = 0  \\
& 0,\quad\quad &\mbox{if} \quad K = 1 \,\, \mbox{or}\,\, -1
\end{aligned}
\right.
\end{equation}
The formula \eqref{eq3-10} and \eqref{eq3-13} become
\begin{equation} \label{eq6-3}
\gamma^{ik} =  \,\eta'(v) \,\Big( \delta_{ik} - \frac{ v_i v_k }{\sqrt{1 +  |\nabla' v|^2 } ( 1 + \sqrt{ 1 +  |\nabla' v|^2 } )} \Big)
\end{equation}
\begin{equation} \label{eq6-4}
h_{ij} = \, \frac{1}{ \eta'^2(v) \sqrt{1 + |\nabla' v|^2}} \Big( \eta'(v) \nabla'_{ij} v + \eta(v) v_i v_j +  \eta(v) \,\delta_{ij} \Big)
\end{equation}
Denoting
\[ w = \sqrt{1 + |\nabla' v|^2}\quad \mbox{and} \quad \tilde{\gamma}^{ik} = \delta_{ik} - \frac{ v_i v_k }{w ( 1 + w )},\]
we have
\begin{equation} \label{eq6-1}
\begin{aligned}
a_{ij} =  & \, \frac{1}{ w } \, \tilde{\gamma}^{ik} \Big( \eta'(v) \nabla'_{kl} v + \eta(v) v_k v_l +  \eta(v) \,\delta_{kl}\Big)\, \tilde{\gamma}^{lj} \\
= & \, \frac{1}{w} \Big(\, \eta(v)\, \delta_{ij} \,+ \,\eta'(v)\, \tilde{\gamma}^{ik}\, \nabla'_{kl} v  \,\tilde{\gamma}^{lj} \,\Big)
\end{aligned}
\end{equation}

\vspace{5mm}

\subsection{Reformulation of equation \eqref{eq1-1} under transformation \eqref{eq3-14}}~

\vspace{3mm}

Let $f = \sigma_k^{1/k}$. Note that $f$ satisfies the following properties (see \cite{CNSIII, CNSV}):
\begin{equation} \label{eq1-5}
\frac{\partial f}{\partial \lambda_i} > 0 \quad \mbox{in} \quad \Gamma_k,\quad i = 1, \ldots, n
\end{equation}
\begin{equation} \label{eq1-6}
f \,\,\mbox{is} \,\,\mbox{concave}\,\, \mbox{in} \,\, \Gamma_k
\end{equation}
\begin{equation} \label{eq1-7}
f > 0 \quad\,\, \mbox{in} \,\,\, \Gamma_k \quad\quad \mbox{and} \quad\quad  f = 0 \quad\,\,\mbox{on} \,\,\, \partial\Gamma_k
\end{equation}

Under transformation \eqref{eq3-14}, the Dirichlet problem \eqref{eq1-1}
is equivalent to
\begin{equation} \label{eq2-11}
\left\{ \begin{aligned}
f(\kappa[ u ]) \, = \, \,&  \psi(z, u, \nabla' u)  \quad  & \mbox{in} \quad \Omega
\\
u \, =  \, \,&\varphi  \quad \quad & \mbox{on} \quad \partial \Omega
\end{aligned} \right.
\end{equation}
where we use the same $\psi$ for the function on the right hand side, and $\varphi$ for the boundary value.
Denote $A[u] = \{ a_{ij} \}$ where $a_{ij}$ is given by \eqref{eq3-18},
$F( A ) = f(\lambda( A ))$ where $\lambda(A)$ denotes the eigenvalues of $A$,
and
\[ G(r, p, u) \,= \, F ( A ( r, p, u ) )\]
where $A( r, p, u )$ is obtained from $A[u]$ with $( r, p, u )$ in place of $(\nabla'^2 u, \nabla' u, u)$. Therefore, $\kappa[u] = \lambda( A [ u ] )$ and
equation \eqref{eq2-11} is equivalent to
\begin{equation} \label{eq2-13}
\left\{ \begin{aligned}
G(\nabla'^2 u, \nabla' u, u) \,=  \,\,& \psi(z, u, \nabla' u)  \quad  &\mbox{in} \quad \Omega
\\
u \, =  \, \,&\varphi  \quad \quad & \mbox{on} \quad \partial \Omega
\end{aligned} \right.
\end{equation}
We  recall some properties of the function $F$ and $G$ (see \cite{GS04} for instance).
Denote
\[ F^{ij} ( A ) = \frac{\partial F}{\partial a_{ij}} ( A ), \quad F^{ij, kl} (A) = \frac{\partial^2 F}{\partial a_{ij} \partial a_{kl}} (A),\]
\[ G^{ij}(r, p, u) = \frac{\partial G}{\partial r_{ij}}(r, p, u), \quad G^i(r, p, u) = \, \frac{\partial G}{\partial p_i}(r, p, u),  \quad G_u(r, p, u) = \, \frac{\partial G}{\partial u}(r, p, u),\]
\[\psi_u(z, u, p) =  \frac{\partial \psi}{\partial u}(z, u, p), \quad \psi^i(z, u, p) = \,\frac{\partial \psi}{\partial p_i} (z, u, p)  \]
The matrix $\{ F^{ij} (A) \}$ is symmetric with eigenvalues $f_1, \ldots, f_n$;
by \eqref{eq1-5}, $F^{ij} (A) > 0$ whenever $\lambda(A) \in \Gamma_k$; by \eqref{eq1-6}, $F$ is a concave function of $A$, i.e., the symmetric matrix $F^{ij, kl} (A) \leq 0 $ whenever $\lambda(A) \in \Gamma_k$.
The function $G$ has similar properties. In fact, from \eqref{eq3-18} we have
\begin{equation} \label{eq2-14}
 G^{ij} = \frac{\partial G}{\partial u_{ij}}  = \frac{\partial F}{\partial a_{kl}} \frac{\partial a_{kl}}{\partial u_{ij}} =  \frac{ - \phi \zeta'(u)}{\sqrt{\phi^2 + \zeta'^2(u) |\nabla' u|^2}} F^{kl} \gamma^{ik} \gamma^{jl}
\end{equation}
Thus the symmetric matrix $G^{ij} > 0$ if and only if $F^{ij} > 0$, which in particular implies that equation \eqref{eq2-13} is elliptic for strictly locally convex solutions.
Also by \eqref{eq3-18} we can calculate
\[ \frac{\partial^2 G}{\partial u_{ij} \partial u_{kl}} = \frac{\partial a_{pq}}{\partial u_{ij}} \,\frac{\partial^2 F}{\partial a_{pq} \partial a_{rs}}\,\frac{\partial a_{rs}}{\partial u_{kl}}\]
which implies that $G$ is concave with respect to $\{ u_{ij} \}$ for strictly locally convex $u$.

We next compute $G^s$ and $G_u$.

\vspace{2mm}

\begin{lemma} \label{Lemma2-1}
Denote $w = \sqrt{ \phi^2  + \zeta'^2(u) |\nabla' u|^2}$. Then
\begin{equation*}
 G^s \, = \, - \frac{2 \zeta'^2 ( w \,\gamma^{is}\, u_q + \phi \,\gamma^{q s} u_i )}{ w ( \phi + w )} F^{ij} a_{q j} - \frac{\zeta'^2\, u_s}{w^2} \, F^{ij} a_{ij}
 \end{equation*}
\begin{equation*}
\begin{aligned}
G_u = -2 \Big( \phi \phi' \zeta' g^{iq} + \frac{\zeta' \zeta'' u_i u_q}{w^2}\Big) F^{ij} a_{q j} + \Big( \frac{\phi' \zeta'}{\phi} - \frac{\phi \phi' \zeta'}{w^2} + \frac{\phi^2 \zeta''}{\zeta'\, w^2} \Big) F^{ij} a_{ij} - \frac{\phi \zeta'}{w} F^{ij} g^{ij}
\end{aligned}
\end{equation*}
\end{lemma}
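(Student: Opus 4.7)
The plan is to apply the chain rule $G^s = F^{ij}\,\partial a_{ij}/\partial u_s$ and $G_u = F^{ij}\,\partial a_{ij}/\partial u$, and then to differentiate the factored form $a_{ij} = \gamma^{ik} h_{kl} \gamma^{lj}$ in \eqref{eq3-18} by the product rule, using the explicit expressions \eqref{eq3-10} for $\gamma^{ik}$ and \eqref{eq3-13} for $h_{kl}$. Each individual differentiation is mechanical; the real work is to regroup the resulting sum so that pairs of $\gamma$'s collapse back into $a_{ij}$, into $g^{ij}$ via $\gamma^{ik}\gamma^{kj} = g^{ij}$ (which follows from the identity $\gamma_{ik}\gamma_{kj} = g_{ij}$ stated right after \eqref{eq3-7}), or into a single $\gamma^{qs}$ carrying a free index.

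For $G^s$ the only $u_s$-dependent pieces are the weight $-\phi\zeta'/w$ inside $h_{kl}$ and the factors $\gamma^{ik}$; the key auxiliary computation is $\partial w/\partial u_s = \zeta'^2 u_s/w$. Differentiating the weight and then substituting back $(-\phi\zeta'/w)\,\gamma^{ik}(\nabla'_{kl}u + u\delta_{kl})\gamma^{lj} = a_{ij}$ produces directly the $-\zeta'^2 u_s F^{ij} a_{ij}/w^2$ contribution. The remaining piece is $(\partial\gamma^{ik}/\partial u_s)\,h_{kl}\,\gamma^{lj}$ plus its symmetric counterpart, which by the symmetry of $\{F^{ij}\}$ combines into twice a single trace; differentiating the numerator $\zeta'^2 u_i u_k$ and the denominator $w(\phi+w)$ in \eqref{eq3-10}, then reindexing one inner slot as $q$ and absorbing $h_{kl}\gamma^{lj}$ into $\gamma^{-1}a$-type contractions, yields the two $F^{ij}a_{qj}$ terms with $\gamma^{is}u_q$ and $\gamma^{qs}u_i$.

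For $G_u$ the dependence is richer, as $\phi$ enters via $\partial\phi/\partial u = \phi'\zeta'$ and $\zeta'$ itself contributes through $\zeta''$. Writing $h_{kl} = \alpha(\nabla'_{kl}u + u\delta_{kl})$ with $\alpha = -\phi\zeta'/w$, one has $\partial h_{kl}/\partial u = (\partial\log|\alpha|/\partial u)\,h_{kl} + \alpha\delta_{kl}$; after contracting with $\gamma^{ik}\gamma^{lj}$ the first piece gives a scalar multiple of $a_{ij}$ and the second yields the distinguished $\alpha g^{ij} = -\phi\zeta' g^{ij}/w$ that matches the last term of the lemma. Computing $\partial\log|\alpha|/\partial u = \phi'\zeta'/\phi + \zeta''/\zeta' - (\phi\phi'\zeta' + \zeta'\zeta''|\nabla'u|^2)/w^2$ produces the coefficient of the $F^{ij}a_{ij}$ block; the remaining $\partial\gamma^{ik}/\partial u$ contributions (and their symmetric partners), after absorbing $\gamma$-pairs into $g^{iq}$ and isolating the $u_i u_q/w^2$ terms from the $u_i u_k$ pieces in the numerator of \eqref{eq3-10}, build up the $F^{ij}a_{qj}$ block with its stated coefficient.

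The main obstacle I anticipate is purely organizational rather than conceptual: the product rule generates on the order of a dozen terms with coefficients built from $\phi$, $\phi + w$, $w$, $\zeta'$, and $\zeta''$, and the stated answer emerges only after careful reindexing and cancellation of the $\phi+w$ denominators introduced by $\gamma^{ik}$. To keep the bookkeeping manageable I would abbreviate $\alpha = -\phi\zeta'/w$ from the start, use the symmetry of $\{F^{ij}\}$ to pair each $\partial\gamma^{ik}/\partial(\cdot)$ with its transpose immediately so only one derivative needs to be computed, and defer the substitution $\gamma^{ik}h_{kl}\gamma^{lj} = a_{ij}$ until the very last step.
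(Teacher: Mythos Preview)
Your plan is correct and follows the same overall strategy as the paper: chain rule $G^s = F^{ij}\partial a_{ij}/\partial u_s$, $G_u = F^{ij}\partial a_{ij}/\partial u$, then product rule on $a_{ij} = \gamma^{ik}h_{kl}\gamma^{lj}$, exploiting the symmetry of $F^{ij}$ to double one $\gamma$-derivative.

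The one point of divergence is how the $\gamma$-derivative is handled. You propose to differentiate $\gamma^{ik}$ directly from \eqref{eq3-10}, and you correctly anticipate that the resulting $w(\phi+w)$ denominators will make the regrouping laborious. The paper instead uses the inverse-matrix identity $\partial\gamma^{ik} = -\gamma^{ip}(\partial\gamma_{pq})\gamma^{qk}$ and differentiates the simpler $\gamma_{pq}$ from \eqref{eq3-11}. Because $\gamma_{pq}$ has only $\phi+w$ in the denominator, $\partial\gamma_{pq}/\partial u_s$ and $\partial\gamma_{pq}/\partial u$ come out cleanly; in particular the paper obtains the closed form $\partial\gamma_{ik}/\partial u = \phi\phi'\zeta'\,\gamma^{ik} + \zeta'\zeta'' u_i u_k/w$. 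Combined with the contraction identity $\gamma^{ip}u_p = u_i/w$, this makes the conversion $(\partial\gamma^{ik})h_{kl}\gamma^{lj} = (\partial\gamma^{ik})\gamma_{kq}\,a_{qj} = -\gamma^{ik}(\partial\gamma_{kq})\,a_{qj}$ collapse immediately into the $g^{iq}$ and $u_iu_q/w^2$ pieces of the stated formula, with no $\phi+w$ cancellations needed. Your route reaches the same destination, but adopting this inverse-matrix trick would eliminate precisely the organizational obstacle you flagged.
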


\begin{proof}
\begin{equation}  \label{eq2-16}
G^s \, = \frac{\partial F}{\partial a_{ij}} \frac{\partial a_{ij}}{\partial u_s} = F^{ij} \big( 2\, \frac{\partial \gamma^{ik}}{\partial u_s} \, h_{kl}\,\gamma^{lj} + \gamma^{ik}\,\frac{\partial h_{kl}}{\partial u_s}\,\gamma^{lj} \big)
\end{equation}
where
\begin{equation} \label{eq2-17}
\frac{ \partial \gamma^{ik}}{\partial u_s} = - \gamma^{ip}\, \frac{\partial \gamma_{pq}}{\partial u_s} \, \gamma^{qk}
\end{equation}
From \eqref{eq3-11} and \eqref{eq3-10},
\begin{equation} \label{eq2-18}
\frac{\partial \gamma_{pq}}{\partial u_s} \,=
   \frac{ \zeta'^2(u) ( \delta_{ps} u_q + \delta_{q s} u_p ) }{ \phi + w } - \frac{\zeta'^4( u )\, u_p u_q u_s}{( \phi + w )^2 w}
= \frac{\zeta'^2(u) ( \delta_{ps} u_q + \phi\, u_p \gamma^{q s}) }{ \phi + w  }
\end{equation}
\begin{equation} \label{eq2-19}
\gamma^{ip} \,u_p =  \frac{ u_i }{w}
\end{equation}
From \eqref{eq3-13} and \eqref{eq3-18},
\begin{equation} \label{eq2-20}
\gamma^{ik}\,\frac{\partial h_{kl}}{\partial u_s}\,\gamma^{lj} = - \frac{\zeta'^2(u) \,u_s}{ w^2 } \, a_{ij}
\end{equation}
Taking \eqref{eq2-17}--\eqref{eq2-20} into \eqref{eq2-16}, we proved the first formula.

For the second formula,
\begin{equation}  \label{eq2-36}
G_u \, = \frac{\partial F}{\partial a_{ij}} \frac{\partial a_{ij}}{\partial u} = F^{ij} \big( 2\, \frac{\partial \gamma^{ik}}{\partial u} \, h_{kl}\,\gamma^{lj} + \gamma^{ik}\,\frac{\partial h_{kl}}{\partial u}\,\gamma^{lj} \big)
\end{equation}
where
\begin{equation*}
\frac{ \partial \gamma^{ik}}{\partial u} = - \gamma^{ip}\, \frac{\partial \gamma_{pq}}{\partial u} \, \gamma^{qk}
\end{equation*}
From \eqref{eq3-11},
\begin{equation*}
\begin{aligned}
 \frac{\partial \gamma_{ik}}{\partial u} = & \, \phi' \zeta' \delta_{ik} + \frac{2 \zeta' \zeta'' u_i u_k}{\phi + w } - \frac{\zeta'^2(u) u_i u_k}{( \phi + w )^2} \big( \phi' \zeta'(u) + \frac{\phi \phi' \zeta' + \zeta'\zeta''|\nabla' u|^2}{w} \big)
 \\
= & \,\phi' \zeta' \delta_{ik} + \frac{ \zeta' u_i u_k}{\phi + w} \big( 2 \zeta'' - \frac{\zeta'}{\phi + w} \big( \phi' \zeta'  +  \frac{\phi \phi' \zeta' + \zeta'\zeta''|\nabla' u|^2}{w}   \big)\, \big) \\
= & \,\phi' \zeta' \delta_{ik} + \frac{ \zeta' u_i u_k}{\phi + w} \big( 2 \zeta'' - \frac{\zeta'^2 \zeta'' |\nabla' u|^2}{(\phi + w) w} - \frac{\phi' \zeta'^2}{w} \big) \\
= & \,\phi' \zeta' \delta_{ik} + \frac{ \zeta' u_i u_k}{\phi + w} \big(  \frac{w + \phi}{w} \zeta''  - \frac{\phi' \zeta'^2}{w} \big)
\end{aligned}
\end{equation*}
In view of \eqref{eq3-10}, the above formula becomes
\begin{equation} \label{eq2-37}
 \frac{\partial \gamma_{ik}}{\partial u} = \phi \phi' \zeta' \gamma^{ik} + \frac{\zeta' \zeta'' u_i u_k}{w}
\end{equation}
Direct calculation from \eqref{eq3-13} yields
\begin{equation} \label{eq2-38}
\frac{\partial h_{ij}}{\partial u} = ( - \frac{\phi' \zeta'^2}{w} + \frac{\phi^2 \phi' \zeta'^2}{w^3} - \frac{\phi^3 \zeta''}{w^3} ) (\nabla'_{ij} u + u \delta_{ij}) - \frac{\phi\, \zeta'}{w} \delta_{ij}
\end{equation}
Inserting \eqref{eq2-37}, \eqref{eq2-38} into \eqref{eq2-36} and in view of \eqref{eq3-18}, \eqref{eq2-19} we proved the second formula.
\end{proof}

\vspace{2mm}

\begin{cor} \label{GsGu}
Suppose that we have the $C^1$ bounds for strictly locally convex solutions $u$ of \eqref{eq2-11}:
$ u_L^K < C_0^{-1} \leq u \leq C_0$ and  $|\nabla' u| \leq C_1$ in $\overline{\Omega}$.
Then
\[| G^s | \, \leq \, C \quad \mbox{and} \quad
| G_u | \,\leq \, C ( 1 + \sum G^{ii} )
\]
\end{cor}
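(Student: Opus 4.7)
The plan is to read off both bounds directly from the explicit formulas in Lemma \ref{Lemma2-1} using two organizing identities. First, since $f = \sigma_k^{1/k}$ is homogeneous of degree one, Euler's identity gives $F^{ij}(A)\,a_{ij} = f(\lambda(A)) = \psi$; hence every occurrence of $F^{ij}a_{ij}$ is uniformly bounded by $\|\psi\|_{C^0}$. Second, for a mixed contraction $T^{iq} F^{ij} a_{qj}$ appearing in either formula, I would diagonalize $A$: in an orthonormal frame where $a_{ij} = \kappa_i \delta_{ij}$, one has $F^{ij} = f_i \delta_{ij}$, so
\[
T^{iq} F^{ij} a_{qj} \,=\, \sum_i T^{ii} f_i \kappa_i,
\]
which is bounded in absolute value by $(\max_i |T^{ii}|) \sum_i f_i \kappa_i = (\max_i |T^{ii}|)\,\psi$, since $f_i, \kappa_i > 0$ for strictly locally convex admissible $u$.

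Under the assumed $C^1$ bounds $u_L^K < C_0^{-1} \leq u \leq C_0$ and $|\nabla' u| \leq C_1$, each prefactor $\phi, \phi', \zeta'(u), \zeta''(u), w, w^{-1}, \gamma^{ij}, g^{ij}$ is uniformly bounded; this is where the strict inequality $u > u_L^K$ is essential, keeping $\zeta'$ and $\zeta''$ away from their singularities in the spherical and hyperbolic cases. Applying the two identities term by term to the formula for $G^s$ then yields $|G^s| \leq C$ directly.

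For $G_u$, every term of the form $F^{ij} a_{ij}$ or $T^{iq} F^{ij} a_{qj}$ is controlled in the same way. The remaining term is $-\frac{\phi\,\zeta'}{w}\,F^{ij} g^{ij}$, which I would identify with $\sum_i G^{ii}$ as follows: since $\{\gamma_{ij}\}$ is the symmetric square root of $\{g_{ij}\}$, one has $\gamma^{ik}\gamma^{il} = g^{kl}$, and tracing the expression $G^{ij} = -\frac{\phi\,\zeta'}{w}F^{kl}\gamma^{ik}\gamma^{jl}$ from \eqref{eq2-14} gives
\[
\sum_i G^{ii} \,=\, -\frac{\phi\,\zeta'}{w}\,F^{kl}\,g^{kl},
\]
a positive quantity because a direct check from \eqref{eq3-14} yields $\zeta' < 0$ in each of the three space forms. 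This is precisely the outstanding term in $G_u$, so $|G_u| \leq C + \sum_i G^{ii} \leq C(1 + \sum G^{ii})$. The argument is essentially algebraic once Lemma \ref{Lemma2-1} is available; the only care needed is the sign verification for $\zeta'$ in each model and the bookkeeping that confirms each coefficient tensor is uniformly bounded under the $C^1$ hypotheses.
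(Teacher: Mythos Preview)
Your proposal is correct and follows essentially the same route as the paper: bound the mixed contractions $b_{iq}F^{ij}a_{qj}$ by diagonalizing $A$ (the paper phrases this as ``$\{F^{ij}\}$ and $A$ can be diagonalized simultaneously''), control $F^{ij}a_{ij}=\sum f_i\kappa_i$, and absorb the $F^{ij}g^{ij}$ term into $\sum G^{ii}$. Your two tweaks---invoking Euler's identity for the degree-one homogeneous $f$ to get $\sum f_i\kappa_i=\psi$ exactly (the paper uses concavity and $f(0)=0$ to get $\leq\psi$), and recognizing $-\tfrac{\phi\zeta'}{w}F^{ij}g^{ij}=\sum_i G^{ii}$ directly from tracing \eqref{eq2-14} rather than passing through $\sum f_i\leq C\sum G^{ii}$---are slightly sharper but not a different argument.
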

\begin{proof}
Note that $\{ F^{ij} (A) \}$ and $A$ can be diagonalized simultaneously by an orthonormal transformation. Consequently, the eigenvalues of the matrix $\{ F^{ij}(A)\} A$, which is not necessarily symmetric, are given by
\[ \lambda( \{ F^{ij}(A)\} A ) = ( f_1 \kappa_1, \ldots, f_n \kappa_n ) \]
In particular we have
\[ F^{ij}\, a_{ij} = \sum f_i \kappa_i \]
In addition,
for a bounded matrix $B = \{b_{ij}\}$, i.e. $|b_{ij}| \leq C$ for all $1 \leq i, j \leq n$ we have
\[ \vert b_{i k} F^{i j } a_{k j} \vert \, \leq C \sum f_i \kappa_i \]
Thus by Lemma \ref{Lemma2-1} we have
\[ \vert G^s \vert \,\leq \, C  \sum f_i \kappa_i\quad \mbox{and} \quad \vert G_u \vert \,\leq \,C ( \sum f_i \kappa_i + \sum f_i ) \]
Finally, by the concavity of $f$ and $f(0) = 0$ we can derive that $ \sum f_i \kappa_i \leq \psi \leq C$. Also, in view of \eqref{eq2-14} we have
$\sum f_i \leq C \sum G^{ii}$. Hence the corollary is proved.
\end{proof}

\vspace{5mm}

\subsection{Reformulation of equation \eqref{eq2-11} under transformation \eqref{eq6-2}}~

\vspace{3mm}

Under transformation \eqref{eq6-2}, the Dirichlet problem \eqref{eq2-11} has the following form
\begin{equation} \label{eq2-24}
\left\{ \begin{aligned}
f(\kappa[ v ])\,  = \,\,& \psi(z, v, \nabla' v)  \quad  & \mbox{in} \quad \Omega
\\
v \, =  \,\,& \varphi  \quad \quad & \mbox{on} \quad \partial \Omega
\end{aligned} \right.
\end{equation}
where $\kappa[v] = \lambda( A [ v ] )$ and $A[v] = \{ a_{ij} \}$ with $a_{ij}$ given by \eqref{eq6-1}.
Define $\mathcal{G}$ by
\[ \mathcal{G}(r, p, v) \,= \, F ( A ( r, p, v ) )\]
where $A( r, p, v )$ is obtained from $A[v]$ with $( r, p, v )$ in place of $(\nabla'^2 v, \nabla' v, v)$.
Therefore equation \eqref{eq2-24} is equivalent to
\begin{equation} \label{eq2-26}
\left\{ \begin{aligned}
\mathcal{G}(\nabla'^2 v, \nabla' v, v)\, =  \,\,& \psi(z, v, \nabla' v)   \quad  & \mbox{in} \quad \Omega
\\
v \, =  \,\,& \varphi  \quad \quad & \mbox{on} \quad \partial \Omega
\end{aligned} \right.
\end{equation}
The function $\mathcal{G}$ has similar properties as $F$. Denote
\[ \mathcal{G}^{ij}(r, p, v) = \frac{\partial \mathcal{G}}{\partial r_{ij}} (r, p, v), \quad \mathcal{G}^{i} (r, p, v) = \frac{\partial \mathcal{G}}{\partial p_{i}}(r, p, v), \quad \mathcal{G}_v (r, p, v) = \, \frac{\partial \mathcal{G}}{\partial v} (r, p, v) \]
By \eqref{eq6-1}, we can see that equation \eqref{eq2-26} is elliptic for strictly locally convex $v$, and $\mathcal{G}$ is concave with respect to $\nabla'^2 v $ for strictly locally convex $v$.

\vspace{5mm}

\section{A priori estimates}

\vspace{6mm}

In this section we derive a priori $C^2$ estimates for strictly locally convex solution $u$ to the Dirichlet problem \eqref{eq2-13} with $u \geq \underline{u}$ in $\Omega$.
\begin{equation} \label{eq4-2}
\Vert u \Vert_{C^{2}(\overline{\Omega})} \,\leq\, C
\end{equation}

The $C^1$ bound follows directly from the convexity of the radial graph $u$ with $u \geq \underline{u}$ in $\Omega$ and $u = \underline{u}$ on $\partial\Omega$. In section 4, we will derive global curvature estimates, which is equivalent to the global bound for $|\nabla'^2 u|$ on $\overline{\Omega}$ from its bound on $\partial\Omega$. Therefore in this section we focus on the boundary estimate
\begin{equation} \label{eq3-32}
| \nabla'^2 u | \leq C \quad\mbox{on} \quad \partial\Omega
\end{equation}

\vspace{3mm}

\subsection{$C^1$ estimates}

\vspace{3mm}

The $C^1$ estimate for the case $K = 0$ is established in \cite{GS93}. The method turns out to work in space forms. For the sake of completeness, we provide the proof.

\begin{lemma} \label{Lemma5-1}
Under assumption \eqref{eq1-9}, for any strictly locally convex function $u$ with $ u \geq \underline{u}$ in $\Omega$ and $u = \underline{u}$ on $\partial\Omega$ we have
\begin{equation} \label{eq5-1}
 u_L^K < C_0^{-1} \leq u \leq C_0, \quad\quad |\nabla' u| \leq C_1 \quad\mbox{in} \quad \overline{\Omega}
\end{equation}
where $C_0$ depends only on $\Omega$, $\sup_{\partial\Omega}\underline{u}$ and $\inf_{\Omega}\underline{u}$; $C_1$ depends in addition on $\sup_{\partial\Omega}\vert\nabla'\underline{u}\vert$.
\end{lemma}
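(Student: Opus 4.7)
For the $C^0$ lower bound, the inequality $u\geq\underline u$ together with $\underline u:=\zeta^{-1}(\overline\rho)\in C^2(\overline\Omega)$ bounded strictly above $u_L^K$ (by positivity of $\overline\rho$, and by $\overline\rho<\pi/2$ when $K=1$) yields $u\geq\inf_\Omega\underline u$, so one may take $C_0^{-1}:=\inf_\Omega\underline u>u_L^K$. For the upper bound, the key input is the strict local convexity, which by \eqref{eq3-17} is exactly $\nabla'^2 u + u\,I>0$ in $\Omega$. Restricted to any unit-speed geodesic $\gamma\colon[0,L]\to\overline\Omega$ in $\mathbb{S}^n$, the scalar function $v(t):=u(\gamma(t))$ satisfies $v''+v>0$. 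Assumption \eqref{eq1-9} guarantees, by a compactness argument on $\mathbb{S}^n$, the existence of $L_0=L_0(\Omega)<\pi$ such that every $z\in\Omega$ lies on such a geodesic segment with endpoints on $\partial\Omega$ and of length $L\leq L_0$; testing $v''+v$ against the positive first Dirichlet eigenfunction of $\partial_t^2+1$ on $[0,L]\subset[0,\pi)$ then forces the sinusoidal comparison
\[ v(t)\,\leq\,\tfrac{\sin(L-t)}{\sin L}\,v(0)+\tfrac{\sin t}{\sin L}\,v(L)\,\leq\,\tfrac{\sup_{\partial\Omega}\underline u}{\cos(L_0/2)},\]
which produces the desired upper bound $u\leq C_0$.

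For the gradient estimate I would exploit the classical equivalence between the spherical inequality $\nabla'^2 u + u\,I\geq0$ on $\mathbb{S}^n$ and pointwise nonnegativity of the Euclidean Hessian of the $1$-homogeneous extension $\tilde u(x):=|x|\,u(x/|x|)$ on the cone $C(\Omega)\subset\mathbb{R}^{n+1}\setminus\{0\}$. The interior $C^1$ estimate then reduces to a standard convex-analysis fact: locally around each point of $C(\Omega)$, $\tilde u$ is a bounded convex function on a Euclidean ball (boundedness from the $C^0$ estimate just obtained), hence locally Lipschitz, which controls $|\nabla'u|$ on compact subsets of $\Omega$ with a constant depending on the distance to $\partial\Omega$. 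For the boundary $C^1$ estimate at $z_0\in\partial\Omega$, the tangential component of $\nabla'u$ coincides with that of $\nabla'\underline u$ and is bounded by $\sup_{\partial\Omega}|\nabla'\underline u|$; the sub-barrier inequality $u\geq\underline u$ with equality on $\partial\Omega$ supplies one side of the normal-derivative bound, and for the opposite side I would compare $v(t):=u(\gamma(t))$ along a suitable geodesic $\gamma$ entering $\Omega$ from $z_0$ with the sinusoidal solution of $w''+w=0$ matching $v$'s value and derivative at $t=0$: the estimate $v(T)\geq v(0)\cos T+v'(0)\sin T$, combined with the $C^0$ bound on $v(T)$, upper-bounds $v'(0)$ provided $\sin T$ is controlled from below.

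The main technical difficulty is precisely this last step: the geodesic used for the comparison must be chosen so that the comparison interval $[0,T]$ has $T$ uniformly bounded away from both $0$ and $\pi$ as $z_0$ ranges over $\partial\Omega$, so that $\sin T$ is uniformly bounded below. This is a purely geometric condition on $(\Omega,\partial\Omega)$ using the quantitative content of \eqref{eq1-9} together with the smoothness of $\partial\Omega$. Once this geometric input is secured the remaining work is purely one-dimensional, and since all dependence on the sectional curvature $K$ has already been absorbed into the change of variables \eqref{eq3-14} (so that the inequalities $\nabla'^2 u+u\,I>0$ and $v''+v>0$ are themselves $K$-independent), the proof of Guan--Spruck \cite{GS93} for $K=0$ transfers verbatim to the space-form setting.
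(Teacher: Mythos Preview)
Your $C^0$ argument is a correct variant of the paper's: both exploit the one-dimensional inequality $v''+v>0$ along geodesics. The paper is slightly more economical---it starts from an interior maximum $P$ (where $v'(0)=0$) and runs a single geodesic to some $Q\in\partial\Omega$ at distance $l\le\pi/2-\epsilon$ (which is exactly what ``no hemisphere'' provides), then the identity $\bigl((v/\cos s)'\cos^2 s\bigr)'=(v''+v)\cos s>0$ yields $u(P)\le u(Q)/\cos l$ directly. Your two-endpoint sinusoidal comparison also works, though it requires the somewhat stronger geometric input that every point of $\Omega$ lies on a boundary-to-boundary geodesic of length uniformly less than $\pi$.

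For the gradient estimate the paper's route is different, and your sketch as written has a genuine gap. The paper does two things: (i) trace the convexity inequality to get $\Delta'u+nu>0$ and compare with the solution $\bar u$ of $\Delta'\bar u+nC_0=0$, $\bar u=\underline u$ on $\partial\Omega$, which sandwiches $\underline u\le u\le\bar u$ and hence bounds $|\nabla'u|$ on $\partial\Omega$; then (ii) consider the test function $w=\sqrt{u^2+|\nabla'u|^2}$: at any interior maximum one has $0=ww_i=(\nabla'_{ik}u+u\,\delta_{ik})u_k$, and positive-definiteness of $\nabla'^2u+uI$ forces $\nabla'u=0$ there, so $\sup_{\overline\Omega}|\nabla'u|\le\sup_{\overline\Omega}w\le\max\bigl(\sup_{\partial\Omega}w,\ \sup_{\overline\Omega}u\bigr)$. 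This gives a global bound in one stroke, with no dependence on distance to the boundary. Your geodesic-ODE bound for the boundary normal derivative is a valid alternative to (i), but your interior argument via the $1$-homogeneous convex extension yields only a Lipschitz constant that blows up as $\operatorname{dist}(z,\partial\Omega)\to 0$; you never explain how to fuse this with the pointwise boundary estimate into a bound uniform on all of $\overline\Omega$. Your closing appeal to \cite{GS93} is well-placed---their method is exactly the test-function trick (ii), and it does transfer verbatim---but it is not the argument you actually sketched.
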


\begin{proof}
Assume that $u$ achieves its maximum at $P \in \Omega$. Then there exists $Q \in \partial \Omega$ and a geodesic in $\Omega$ joining from $P$ to $Q$, with a total length $l \leq \frac{\pi}{2} - \epsilon$ for some $\epsilon > 0$. Since $u$ is strictly locally convex, i.e. $u$ satisfies \eqref{eq3-17}, we have on the geodesic
\[ u'' + u > 0 \]
if we use arc length $s$ as the parameter. It follows that
\[ \Big( \big( \frac{u}{\cos s} \big)' \cos^2 s \Big)' = ( u'' +  u ) \cos s > 0 \quad \mbox{for} \quad 0 \leq s \leq l  \]
Hence
\[ \big( \frac{u}{\cos s} \big)' \cos^2 s \geq u'(0) = 0 \]
and therefore
\[ u(P) \leq \frac{u(Q)}{\cos l} \leq \frac{\sup_{\partial \Omega} u}{\cos(\frac{\pi}{2} - \epsilon)} =  \frac{\sup_{\partial \Omega} \underline{u}}{\cos(\frac{\pi}{2} - \epsilon)}\]
A lower bound for $u$ can be seen directly from
\[u \,\geq\, \underline{u} \,\geq\, \inf_{\Omega}\underline{u} \,>\, u_L^K \quad \mbox{in} \,\,\Omega \]

For the gradient estimate, note that by \eqref{eq3-17} we have
\begin{equation} \label{eq3-26}
\Delta' \,u + n u > 0 \quad \mbox{in} \quad \Omega
\end{equation}
where $\Delta'$ is the Laplace-Beltrami operator on $\mathbb{S}^n$. Let $\bar{u}$ be the solution of
\begin{equation*}
\left\{\begin{aligned}
\Delta'\, \bar{u} + n C_0 & = 0 \quad &\mbox{in} \quad \Omega, \\
\bar{u} & = \underline{u} \quad &\mbox{on} \quad \partial\Omega
\end{aligned}\right.
\end{equation*}
By comparison principle, we have $\underline{u} \leq u \leq \overline{u} $ in $\overline{\Omega}$. Since the tangential derivatives of $u$ on $\partial\Omega$ are known, we obtain
\begin{equation} \label{eq5-5}
| \nabla' u | \leq C_1 \quad \mbox{on} \quad \partial \Omega
\end{equation}

Now we estimate the gradient $\nabla' u$ on $\overline{\Omega}$. Consider the test function
\[ w = \sqrt{u^2 + |\nabla' u|^2} \]
Assume $w$ attains its maximum at $z_0 \in \Omega$. Choose a local orthonormal frame $e_1, \ldots, e_n$ around $z_0$.
At $z_0$, there holds
\[ w w_i =  ( u_{ik} + u \,\delta_{ik} ) \,u_k = 0, \quad i = 1, \ldots, n \]
By \eqref{eq3-17} we have $\nabla' u (z_0) = 0$ and hence
\[ \sup\limits_{\overline{\Omega}} | \nabla' u | \leq w(z_0) \leq \sup\limits_{\overline{\Omega}} u  \]
We thus obtain the estimate
\begin{equation} \label{eq5-6}
| \nabla' u | \leq C_1  \quad \mbox{in} \quad \overline{\Omega}
\end{equation}
\end{proof}

\vspace{3mm}

\subsection{Boundary estimates for second derivatives}~

\vspace{3mm}

Consider any fixed point $z_0 \in \partial \Omega$. Choose a local orthonormal frame field $e_1, \ldots, e_n$ around $z_0$ on $\Omega$, which is obtained by parallel translation of a local orthonormal frame field on $\partial\Omega$ and the interior, unit, normal vector field to $\partial \Omega$, along the geodesics perpendicular to $\partial \Omega$ on $\Omega$. Assume that $e_n$ is the parallel translation of the unit normal field on $\partial \Omega$.

Since $u = \varphi$ on $\partial \Omega$,
\[ \nabla'_{\alpha\beta} (u - \varphi) = - \nabla'_n ( u - \varphi )\,\Gamma_{\alpha\beta}^n, \quad \alpha, \beta < n \quad\mbox{on}\quad \partial\Omega \]
where $\Gamma_{ij}^k$ are the Christoffel symbols of $\nabla'$ with respect to the frame $e_1, \ldots, e_n$ on $\mathbb{S}^n$. We thus obtain
\begin{equation} \label{eq3-27}
| \nabla'_{\alpha \beta} u (z_0) | \leq C, \quad \alpha, \beta < n
\end{equation}
In what follows, the Greek letters $\alpha$, $\beta$, $\ldots$ indicate the indices from $1$ to $n - 1$.

Let $\rho(z)$ and $d(z)$ denote the distances from $z \in \overline{\Omega}$ to $z_0$ and $\partial \Omega$ on $\mathbb{S}^n$, respectively.
Set
\[ \Omega_\delta = \{ z \in \Omega : \rho(z) < \delta \}  \]
Choose $\delta_0 > 0$ sufficiently small such that $\rho$ and $d$ are smooth in $\Omega_{\delta_0}$, on which, we have
\[ |\nabla' d| = 1, \quad\quad - C\, I \leq \nabla'^2\, d \leq C \,I, \quad
\quad |\nabla' \rho| = 1, \quad\quad I \,\leq \,\nabla'^2 \,\rho^2 \,\leq \, 3 I \]
where $C$ depends only on $\delta_0$ and the geometric quantities of $\partial \Omega$, and
\[ \nabla'^2 \underline{u} + \underline{u} \, I \geq 4 \, c_0 \, I  \]
for some constant $c_0 > 0$ because of the strict local convexity  of $\underline{u}$ near $\partial\Omega$.

We will need the following barrier function
\[ \Psi = A v + B \rho^2 \]
with
\[ v = u - \underline{u} + \epsilon\, d - \frac{N}{2} \,d^2 \]
and the linearized operator associated with equation \eqref{eq2-13}
\begin{equation} \label{eq2-21}
 L  = G^{ij}\, \nabla'_{i j} + ( G^i - \psi^i ) \,\nabla'_i
\end{equation}
to estimate the mixed tangential normal and pure normal second derivatives at $z_0$.
By direct calculation and Corollary \ref{GsGu} we have

\begin{equation} \label{eq3-20}
\begin{aligned}
 L v = \,\,& \Big( G^{ij} \nabla'_{ij} + (G^i - \psi^i) \nabla'_i \Big) \Big( u - \underline{u} + \epsilon \,d - \frac{N}{2} d^2 \Big) \\
     = \,\, & G^{ij} \nabla'_{ij} \Big( u - \underline{u} - \frac{N}{2} \,d^2 \Big) + \epsilon \,G^{ij} \nabla'_{ij}  d + (G^i - \psi^i) \nabla'_i \Big( u - \underline{u} + \epsilon\, d - \frac{N}{2} \,d^2 \Big) \\
     \leq \,\, & G^{ij} \Big( \nabla'_{ij} u - \,\big( \nabla'_{ij}(\underline{u} + \frac{N}{2} \,d^2 ) - 2 c_0 \delta_{ij} \big) \Big) \\ & - 2 c_0 \sum G^{ii}
       + C \epsilon \sum G^{ii} + C  ( 1 +  \epsilon +  N \delta )
\end{aligned}
\end{equation}
Since $G(\nabla'^2 u, \nabla' u, u)$ is concave  with respect to $\nabla'^2 u$,
\begin{equation} \label{eq3-21}
\begin{aligned}
& G^{ij} \Big( \nabla'_{ij} u -\, \big( \nabla'_{ij} (\underline{u} + \frac{N}{2} \,d^2 ) - 2 c_0 \delta_{ij}\big) \Big) \\ \leq &\,\, G( \nabla'^2 u, \nabla' u, u ) - G\Big( \nabla'^2\big( \underline{u} + \frac{N}{2} \,d^2 \big) - 2 c_0 I, \nabla' u, u \Big)
\end{aligned}
\end{equation}
Note that
\begin{equation*}
\begin{aligned}
&  \nabla'^2\big( \underline{u} + \frac{N}{2} \,d^2 \big ) - 2 c_0 I + u I \\
= \quad &  \nabla'^2 \underline{u} + \underline{u}\, I +  N d \nabla'^2 d + N \nabla' d \otimes \nabla' d - 2 c_0 I + ( u - \underline{u} ) I \\
\geq \quad & 2 c_0 I - C N \delta I + N \nabla' d \otimes \nabla' d := \mathcal{H}
\end{aligned}
\end{equation*}
Denote $g^{-\frac{1}{2}} = \{\gamma^{ik}\}$.
We thus have
\begin{equation} \label{eq3-22}
\begin{aligned}
& G\Big( \nabla'^2\big( \underline{u} + \frac{N}{2} \,d^2 \big) - 2 c_0 I, \nabla' u, u \Big)  \\
= \,\, & F \Big( \frac{- \phi \,\zeta'(u)}{\sqrt{\phi^2 + \zeta'^2 |\nabla' u|^2 }}\, g^{-1/2} \big( \nabla'^2( \underline{u} + \frac{N}{2} \,d^2 ) - 2 c_0 I + u I \big)\,\, g^{-1/2} \Big) \\
\geq  &\,\, F \Big( \frac{- \phi\, \zeta'(u)}{\sqrt{\phi^2 + \zeta'^2 |\nabla' u|^2 }} \,g^{-1/2} \mathcal{H}\, g^{-1/2} \Big)
\\ =  & \,\,F \Big( \frac{- \phi\, \zeta'(u)}{\sqrt{\phi^2 + \zeta'^2 |\nabla' u|^2 }} \,\mathcal{H}^{1/2} \,g^{-1} \,\mathcal{H}^{1/2} \Big) \\
\geq & \,\,F \Big( \frac{- \phi\, \zeta'(u)}{\sqrt{\phi^2 + \zeta'^2 |\nabla' u|^2 }}\,\mathcal{H}^{1/2} \, \frac{1}{ \phi^2 + \zeta'^2(u) |\nabla' u|^2} \,I \,\mathcal{H}^{1/2} \Big) \\
=  & \,\,F \Big( \frac{- \phi\, \zeta'(u)}{(\phi^2 + \zeta'^2 |\nabla' u|^2 )^{3/2}} \,\mathcal{H} \Big)\,
\geq \, \, F ( \tilde{c}\, \mathcal{H} )
\end{aligned}
\end{equation}
where $\tilde{c}$ is a positive constant depending only on $C_0$ and $C_1$.
Combining \eqref{eq3-20}--\eqref{eq3-22} we have
\begin{equation} \label{eq3-23}
 L v \leq   \, - F ( \tilde{c}\,\mathcal{H} )   + (   C \epsilon  - 2 c_0 ) \,\sum G^{ii} + C ( 1 + \epsilon + N \delta )
\end{equation}
where $\mathcal{H} = \mbox{diag} \Big( 2 c_0 - C N \delta,\,\, \ldots,\,\, 2 c_0 - C N \delta,\,\, 2 c_0 - C N \delta + N\Big)$.
Choose $N$ sufficiently large and $\epsilon$, $\delta$ sufficiently small with $\delta$ depending on $N$ such that
\[  C \epsilon  \leq c_0, \quad  C N \delta \leq c_0, \quad  -F ( \tilde{c}\,\mathcal{H} )   + C  + 2 c_0  \leq - 1, \]
Therefore, \eqref{eq3-23} becomes
\begin{equation} \label{eq3-25}
 L v \leq - c_0 \sum G^{ii} - 1
\end{equation}
We then choose $\delta \leq \frac{2 \epsilon}{N}$ such that
\[ v \,\geq \, 0 \quad \mbox{in} \quad \Omega_\delta \]

A direct consequence of \eqref{eq3-25} is
\begin{equation} \label{eq3-29}
L \Psi \, = \, A \,L v \,+\, B\,L( \rho^2 ) \,\leq \,A ( - c_0 \sum G^{ii} - 1 ) + B C ( 1 + \sum G^{ii} ) \quad\mbox{in}\,\,\Omega_{\delta}
\end{equation}
which will be used later.  Besides, we also need to estimate $L ( \nabla'_k u )$. For this, we first apply the formula
\begin{equation*}
\nabla'_{ij} ( \nabla'_k u ) = \,\nabla'_k \nabla'_{ij} u + \Gamma^{l}_{ik} \nabla'_{jl} u + \Gamma_{jk}^l \nabla'_{il} u + \nabla'_{k} \Gamma_{ij}^l \,u_l
\end{equation*}
to obtain
\begin{equation} \label{eq3-30}
\begin{aligned}
L ( \nabla'_k u ) =\,& G^{ij} \nabla'_{ij} ( \nabla'_k u ) + ( G^i - \psi^i ) \nabla'_i (\nabla'_k u) \\
=  \,& \big( G^{ij}\nabla'_k \nabla'_{ij} u + ( G^i - \psi^i ) \nabla'_{ik} u \big) \\ & + G^{ij} \Gamma^{l}_{ik} \nabla'_{jl} u + G^{ij} \Gamma_{jk}^l \nabla'_{il} u + G^{ij} \nabla'_{k} \Gamma_{ij}^l \,u_l   + ( G^i - \psi^i ) \Gamma_{ik}^l \,u_l
\end{aligned}
\end{equation}
By \eqref{eq2-14} and \eqref{eq3-18} we have
\begin{equation*}
G^{ij} \Gamma^{l}_{ik} ( \nabla'_{jl} u + \,u \,\delta_{jl}) =\, F^{st} \gamma^{is} \gamma^{jt} \Gamma^{l}_{ik}  \cdot  \gamma_{jp}\, a_{pq}\,\gamma_{ql}\\
=\,( \gamma^{is} \Gamma^{l}_{ik} \gamma_{q l } ) \,F^{st} \,a_{t q}
\end{equation*}
The term $G^{ij} \Gamma_{jk}^l \nabla'_{il} u$ can be computed similarly. Taking the covariant derivative of \eqref{eq2-13} and applying Corollary \ref{GsGu} we have
\[ | G^{ij}\nabla'_k \nabla'_{ij} u +  ( G^i - \psi^i ) \, \nabla'_{k i} u | \leq C + | (\psi_u - G_u) u_k | \leq C (1 + \sum G^{ii}) \]
From all these above, \eqref{eq3-30} can be estimated as
\begin{equation} \label{eq3-19}
\vert L ( \nabla'_k u ) \vert \leq  \, C ( 1 + \sum G^{ii} )
\end{equation}

For fixed $\alpha < n$, choose $B$ sufficiently large such that
\[ \Psi \pm \nabla'_\alpha (u - \varphi) \geq 0 \quad \mbox{on} \quad \partial\Omega_\delta   \]
From  \eqref{eq3-29} and \eqref{eq3-19}
\[ L ( \Psi \pm \nabla'_\alpha (u - \varphi) ) \leq A ( - c_0 \sum G^{ii} - 1 ) + B C ( 1 + \sum G^{ii} )  \]
Then choose $A$ sufficiently large such that
\[ L ( \Psi \pm \nabla'_\alpha (u - \varphi) ) \leq 0 \quad \mbox{in} \quad \Omega_\delta \]
Applying the maximum principle we have
\[ \Psi \pm \nabla'_\alpha (u - \varphi) \geq 0  \quad \mbox{in} \quad \Omega_\delta \]
which implies
\begin{equation} \label{eq3-28}
\vert \nabla'_{\alpha n} u (z_0) \vert \leq C
\end{equation}

It remains to estimate the double normal derivative $\nabla'_{n n} u $ on $\partial\Omega$. Since $\sigma_1( \kappa[u] ) > 0$, it suffices to derive an upper bound
\[ \nabla'_{n n} u  \leq C \quad \mbox{on} \quad \partial\Omega \]
In \cite{Cruz}, the author gives a proof (see also \cite{Tru, Guan99}). For the sake of consistency and also to show some details, we provide a complete proof which is slightly different.
Let $\kappa' = (\kappa'_1, \ldots, \kappa'_{n-1})$ be the roots of
\[ \det (  \kappa' \, g_{\alpha\beta} - h_{\alpha \beta}  ) = 0, \quad \quad \alpha,\, \beta < n \]
By definition of $\Gamma_k$, we can verify that the projection of $\Gamma_k \subset \mathbb{R}^n$ onto $\mathbb{R}^{n - 1}$ is exactly
\[ \Gamma'_{k - 1} : = \,\{ \lambda' = (\lambda_1, \ldots, \lambda_{n-1}) \in \mathbb{R}^{n - 1} \,| \, \sigma_{j} (\lambda' ) > 0,\,\, \, j = 1, \ldots, k - 1 \} \]
Since $u$ is $k$-admissible, i.e., $\kappa [u] \in \Gamma_k$, it follows that $\kappa'[u] \in \Gamma'_{k - 1}$.  Note that $\kappa'[u]$ may not be $(\kappa_1, \ldots, \kappa_{n-1})[u]$.
For $z \in \partial \Omega$, define
\[ \tilde{d} (z) \,: =\, \frac{w}{- \zeta' \,\phi } \,\,\mbox{dist} ( \kappa'[u](z), \,\partial \Gamma'_{k - 1} ) \]
where $w = \sqrt{\phi^2 + \zeta'^2 |\nabla' u|^2}$. We want to prove that
$ \tilde{d} (z_1) := \min\limits_{z \in \partial\Omega}\,\tilde{d} (z)$ has a positive uniform lower bound.

Let $\tau_1, \ldots, \tau_{n-1}, e_n$ be a local frame field around $z_1$ on $\Omega$, obtained by parallel translation of a local frame field $\tau_1, \ldots, \tau_{n-1}$ around $z_1$ on $\partial\Omega$ satisfying
\[ g_{\alpha \beta} = \delta_{\alpha\beta}, \quad\quad h_{\alpha \beta}(z_1) = \kappa'_{\alpha}(z_1) \, \delta_{\alpha\beta}, \quad\quad \kappa'_1 (z_1) \leq \ldots \leq \kappa'_{n-1} (z_1) \]
and the interior, unit, normal vector field $e_n$ to $\partial \Omega$, along the geodesics perpendicular to $\partial \Omega$ on $\Omega$. Note that  $\tau_1, \ldots, \tau_{n - 1}$ depend on $\varphi$ and $\nabla'_{e_1} \varphi, \ldots, \nabla'_{e_{n - 1}} \varphi$ on $\partial \Omega$. This can be seen if we let $\tau_{\alpha} = \sum_{i < n} \eta_{\alpha i} \,e_i$ and observe that
\[ \eta_{\alpha i} \,\big( \phi^2 \,\delta_{ij} + \zeta'^2(u)\, \nabla'_{e_i} u \, \nabla'_{e_j} u \big) \,\eta_{\beta j } = \delta_{\alpha \beta} \quad\quad\mbox{on} \quad\partial\Omega \]
which implies that all elements of the invertible matrix $\eta = \{ \eta_{\alpha i} \}$ depend only on $\varphi$ and $\nabla'_{e_1} \varphi, \ldots, \nabla'_{e_{n - 1}} \varphi$ on $\partial \Omega$.

By Lemma 6.1 of \cite{CNSIII}, there exists $\gamma' = (\gamma_1, \ldots, \gamma_{n-1}) \in \mathbb{R}^{n - 1}$ with $\gamma_1 \geq \ldots \geq \gamma_{n - 1} \geq 0$ and $\sum \gamma_{\alpha}^2 = 1$ such that $\Gamma'_{k - 1} \subset \{ \lambda' \in \mathbb{R}^{n-1} | \,\gamma' \cdot \lambda' > 0 \}$ and
\begin{equation} \label{eq3-35}
\tilde{d}(z_1) =   \,\frac{w}{- \zeta' \,\phi } \, \sum\limits_{\alpha < n} \gamma_{\alpha} \,\kappa'_{\alpha} (z_1) =  \,\sum\limits_{\alpha < n} \,\gamma_{\alpha} \,\big( \nabla'_{\alpha \alpha} u + u \,\sigma_{\alpha \alpha} \big) (z_1)
\end{equation}
Note that $\gamma'$ depends on $u$ and $\sum \gamma_{\alpha} \geq 1$.

Since $\underline{u}$ is strictly locally convex near $\partial\Omega$,
\[\sum\limits_{\alpha < n}  \gamma_{\alpha} \big( \nabla'_{\alpha \alpha}  \underline{u} + \underline{u} \,\sigma_{\alpha\alpha} \big) (z_1)  \,\geq \,  2 \, c_1  \]
where $c_1$ is a uniform positive constant.  Hence,
\[\begin{aligned}
& \nabla'_n ( u - \underline{u} ) (z_1) \sum\limits_{\alpha < n} \gamma_{\alpha} \tilde{\Gamma}_{\alpha\alpha}^n (z_1) = \sum\limits_{\alpha < n} \gamma_{\alpha} \nabla'_{\alpha \alpha} ( \underline{u} - u ) (z_1) \\ = \,\, & \sum\limits_{\alpha < n}  \gamma_{\alpha} \big( \nabla'_{\alpha \alpha}  \underline{u} + \underline{u} \,\sigma_{\alpha\alpha} \big) (z_1) - \sum\limits_{\alpha < n}  \gamma_{\alpha} \big( \nabla'_{\alpha \alpha}  u + u\, \sigma_{\alpha\alpha} \big) (z_1) \\
\geq  \,\,& 2 \, c_1 - \,\tilde{d}(z_1)
\end{aligned}
\]
where $\tilde{\Gamma}_{ij}^k$ are the Christoffel symbols of $\nabla'$ with respect to the local frame $\tau_1, \ldots, \tau_{n-1}, e_n$ on $\mathbb{S}^n$.
We may assume
$\tilde{d} (z_1) \leq  \, c_1$, for, otherwise we are done. Then
\[ \nabla'_n ( u - \underline{u} ) (z_1) \sum\limits_{\alpha < n} \gamma_{\alpha} \tilde{\Gamma}_{\alpha\alpha}^n (z_1)
\geq   \, c_1 \]
Note that  $0 <  \nabla'_n( u - \underline{u} ) (z_1) \leq C$. Thus,
\[ \sum\limits_{\alpha < n} \gamma_{\alpha} \tilde{\Gamma}_{\alpha\alpha}^n (z_1)
\geq   \,2 \, c_2 > 0 \]
A straightforward calculation shows that
\begin{equation} \label{eq3-36}
\tilde{\Gamma}_{\alpha \beta}^n = \sum_{i, j  < n} \eta_{\alpha i}\, \eta_{\beta j} \,\Gamma_{ij}^n, \quad\quad \alpha, \,\beta < n
\end{equation}
 Thus by continuity of $\tilde{\Gamma}_{\alpha\alpha}^n (z)$ and $0 \leq \gamma_{\alpha} \leq 1$,
\begin{equation} \label{eq3-33}
 \sum\limits_{\alpha < n} \gamma_{\alpha} \tilde{\Gamma}_{\alpha\alpha}^n (z)  >  \,\sum\limits_{\alpha < n} \gamma_{\alpha} \tilde{\Gamma}_{\alpha\alpha}^n (z_1) - c_2 \geq c_2
\end{equation}
on $\Omega_\delta = \{ z \in \Omega \,| \,\mbox{dist}_{\mathbb{S}^n}(z_1, z) < \delta  \}$
for some small uniform $\delta > 0$ and a uniform positive constant $c_2$.

On the other hand, by Lemma 6.2 of \cite{CNSIII}, for any $z \in \partial \Omega$ near $z_1$,
\[ \,\sum\limits_{\alpha < n} \,\gamma_{\alpha} \,\big( \nabla'_{\alpha \alpha} u + u \,\sigma_{\alpha \alpha} \big) (z) \,\geq \, \frac{w}{- \zeta' \,\phi } \,\sum\limits_{\alpha < n} \,\gamma_{\alpha} \,\kappa'_{\alpha} [ u ] (z) \,\geq \,\tilde{d} (z)\, \geq\, \tilde{d} (z_1) \]
and consequently,
\begin{equation}  \label{eq3-34}
\begin{aligned}
& \nabla'_n ( u - \varphi ) (z) \sum\limits_{\alpha < n} \gamma_{\alpha} \tilde{\Gamma}_{\alpha\alpha}^n (z) = \sum\limits_{\alpha < n} \gamma_{\alpha} \nabla'_{\alpha \alpha} ( \varphi - u ) (z) \\ = \,\, & \sum\limits_{\alpha < n}  \gamma_{\alpha} \big( \nabla'_{\alpha \alpha}  \varphi + \varphi \,\sigma_{\alpha\alpha} \big) (z) - \sum\limits_{\alpha < n}  \gamma_{\alpha} \big( \nabla'_{\alpha \alpha}  u + u\, \sigma_{\alpha\alpha} \big) (z) \\
\leq  \,\,& \sum\limits_{\alpha < n}  \gamma_{\alpha} \big( \nabla'_{\alpha \alpha}  \varphi + \varphi \,\sigma_{\alpha\alpha} \big) (z) -  \,\tilde{d}(z_1)
\end{aligned}
\end{equation}
In view of \eqref{eq3-33}, we define in $\Omega_{\delta}$,
\[ \Phi \, =  \,  \frac{1}{\sum\limits_{\alpha < n} \gamma_{\alpha} \tilde{\Gamma}_{\alpha\alpha}^n } \,\left(\sum\limits_{\alpha < n}  \gamma_{\alpha} \big( \nabla'_{\alpha \alpha}  \varphi + \varphi \,\sigma_{\alpha\alpha} \big)  -  \,\tilde{d}(z_1)\right)   - \nabla'_n ( u - \varphi ) \]
By \eqref{eq3-34},  $\Phi \geq 0$ on $\partial\Omega \cap \overline{\Omega_\delta}$.
In view of \eqref{eq3-19} and \eqref{eq3-36}, we have
\[  L(\Phi) \leq   C \,( 1 + \sum G^{ii} ) \]
Now choose $B$ large such that $\Psi + \Phi \geq 0$ on $\partial \Omega_\delta$. In view of \eqref{eq3-29},  we then choose $A$ sufficiently large such that
$L(\Psi + \Phi) \leq 0$ in $\Omega_{\delta}$. By \eqref{eq3-35}, $(\Psi + \Phi) (z_1) = 0$. It follows that $\nabla'_n (\Psi + \Phi) (z_1) \geq 0$ and hence
\[ \nabla'_{nn} u (z_1) \leq C. \]
Along with \eqref{eq3-27} and \eqref{eq3-28}, we thus have a bound $|\nabla'^2 u (z_1)| \leq C$, equivalently by \eqref{eq3-18}, a bound for all the principal curvatures of the radial graph  at $z_1$. By \eqref{eq1-7},
\[ \mbox{dist} ( \kappa[u](z_1), \partial\Gamma_k )  \geq c_3  \]
and consequently on $\partial \Omega$,
\[ \tilde{d}(z) \geq \tilde{d}(z_1) = \frac{w}{- \zeta' \,\phi }\,\mbox{dist} ( \kappa'[u](z_1), \partial\Gamma'_{k - 1} )   \geq c_4  \]
where $c_3$ and $c_4$ are positive uniform constants.
By a proof similar to Lemma 1.2 of \cite{CNSIII}, we know that there exists $R > 0$ depending on the bounds in \eqref{eq3-27} and \eqref{eq3-28} such that if $\nabla'_{nn}u(z_0) \geq R$ and $z_0 \in \partial \Omega$, then the principal curvatures $(\kappa_1, \ldots, \kappa_n)$ at $z_0$ satisfy
\[ \kappa_{\alpha} = \kappa'_{\alpha} + o(1), \quad\quad \alpha < n \]

\[ \kappa_n = \frac{ h_{nn} - g_{1n} h_{n1} - \ldots - g_{n n-1} h_{n n-1} }{g_{n n} - g_{1 n}^2 - \ldots - g_{n n-1}^2} \left( 1 + \mathcal{O} \Big( \frac{g_{n n} - g_{1 n}^2 - \ldots - g_{n n-1}^2}{ h_{nn} - g_{1n} h_{n1} - \ldots - g_{n n-1} h_{n n-1} } \Big) \right) \]
in the local frame $\tau_1, \ldots, \tau_{n-1}, e_n$  around $z_0$.   However, when $R$ is sufficiently large,
\[ G( \nabla'^2 u, \nabla' u, u ) (z_0) = f( \kappa [u] ) (z_0) \, > \, \psi(z_0, u, \nabla'u) \]
contradicting with equation \eqref{eq2-13}. Hence $\nabla'_{nn} u \leq C$ on $\partial\Omega$ and therefore we proved \eqref{eq3-32}.

\vspace{5mm}

\section{Global curvature estimates}

\vspace{4mm}

Our main result on global curvature estimates can be stated as follows. The following proof is motivated by the work \cite{GRW15, SX15}.

\begin{thm} \label{Theorem 1}
Let $ \Sigma = \{ ( z, \rho (z) ) \,\vert\, z \in \Omega \subset \mathbb{S}^n \}$ be a strictly locally convex $C^4$ hypersurface in $N^{n + 1}(K)$ satisfying \eqref{eq1-0} for some positive function $\psi(V, \nu) \in C^2 (\Gamma)$, where $\Gamma$ is an open neighborhood of the unit normal bundle of $\Sigma$ in $ N^{n+1}(K) \times \mathbb{S}^n$. Suppose
\[ 0 < C_0^{-1} \leq \rho(z) \leq C_0 < \rho_U^K \quad\quad \mbox{and}\quad\quad |\nabla' \rho| \leq C_1\quad \mbox{on} \quad \overline{\Omega}\]
where $C_0$ and $C_1$ are positive constants.
Then there exists a positive constant $C$ depending only on $n$, $k$, $C_0$, $C_1$, $\inf \psi$ and $\Vert\psi\Vert_{C^2}$ such that
\[ \sup\limits_{\substack{ z \in \Omega \\  i = 1, \ldots, n}}  \kappa_i (z)  \leq C \,( 1 +  \sup\limits_{\substack{z \in \partial \Omega\\ i = 1, \ldots, n}}  \kappa_i (z) )  \]
\end{thm}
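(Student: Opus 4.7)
The plan is to run a maximum principle argument on the unit tangent bundle of $\Sigma$ with a test function built from the largest principal curvature together with certain geometric scalars adapted to the space form setting, following the template of \cite{GRW15} and \cite{SX15}. The key auxiliary quantity is the support-function-like scalar $u := \langle V, \nu\rangle$, which by \eqref{eq3-4} is strictly positive on $\Sigma$ when $\Sigma$ is strictly locally convex and admits uniform two-sided bounds from the hypotheses on $\rho$; I would also use $|V|^2 = \phi(\rho)^2$, which is likewise controlled. The conformal Killing identity $\overline{\nabla}_X V = \phi'(\rho)\,X$ translates on $\Sigma$ into
\[ \nabla_i u = \langle V, \tau_k\rangle\, h_{ik}, \qquad \nabla_{ij} u = \phi'\, h_{ij} + \langle V, \tau_k\rangle\, \nabla_k h_{ij} - u\, h_{ik} h_{kj}, \]
together with analogous expressions for $|V|^2$. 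Applying the linearized operator $L = F^{ij}\nabla_{ij}$ (where $F = \sigma_k^{1/k}$ is evaluated on the Weingarten matrix) to $-\log u$ and substituting $\nabla_k h_{ij}$ via the once-differentiated equation $\nabla_k \psi(V,\nu) = F^{ij}\nabla_k h_{ij}$ produces a coercive term comparable to $\sum F^{ii}\kappa_i^2$, which will drive the estimate.

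The test function I would consider is
\[ W(z, \xi) = \log h_{\xi\xi} - \log u + a|V|^2 \]
with $a$ a large constant and $\xi$ ranging over unit tangent vectors to $\Sigma$. Assuming the maximum is attained at an interior point $z_0$ in a direction $\xi_0$ (otherwise the conclusion is immediate), I would rotate so that $\tau_1 = \xi_0$ and $\{h_{ij}\}$ is diagonal at $z_0$ with $\kappa_1 \geq \dots \geq \kappa_n > 0$. The scalar function $\tilde W = \log h_{11} - \log u + a|V|^2$ then has a local maximum at $z_0$, so $\tilde W_i = 0$ and $L\tilde W \leq 0$ there. Using the Simons-type identity for $L h_{11}$ in the space form, which carries the curvature correction $h_{11,ii} - h_{ii,11} = (\kappa_i - \kappa_1)(K + \kappa_i \kappa_1)$, and twice differentiating the equation, one arrives at the familiar inequality involving the concavity term $F^{pq,rs}h_{pq,1}h_{rs,1}$ and the bad third-order terms $\sum F^{ii}h_{11,i}^2/h_{11}^2$.

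The main obstacle is absorbing these bad terms together with the contributions from the $\nu$-dependence of $\psi$, which on differentiation generate expressions of order $|\nabla h|\cdot\kappa_1$ and $\kappa_1^2$; without convexity these cannot in general be controlled, as the counterexamples of \cite{GRW15} illustrate. Here strict local convexity enables the standard splitting of indices into $\{i : \kappa_i \geq \delta \kappa_1\}$ and its complement, so that $F^{pq,rs}h_{pq,1}h_{rs,1}$ compensates $\sum F^{ii}h_{11,i}^2/h_{11}^2$ on the bad set, while the first-order $\nu$-derivative terms are handled by substituting from $\tilde W_i = 0$. The coercive term $\sum F^{ii}\kappa_i^2$ coming from $L(-\log u)$, strengthened by $a L|V|^2$ for $a$ chosen large enough, dominates the remaining bounded errors once $\kappa_1(z_0)$ is large, contradicting $L\tilde W \leq 0$ at $z_0$. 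This forces $\kappa_1(z_0) \leq C$, and unwinding $W$ then yields the claimed global bound.
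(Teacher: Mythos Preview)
Your overall strategy (maximum principle on a test function built from the largest curvature, the support function $u=\langle V,\nu\rangle$, and a radial term) matches the paper's, but the weighting in your test function is wrong in a way that makes the argument fail.

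In the paper the test function is $\frac{1}{2}\ln P(\kappa) - N\ln\tau + \beta\Phi$ with $P=\sum\kappa_i^2$, $\tau=\langle V,\nu\rangle$, $\Phi=\int_0^\rho\phi$; the \emph{large} constant is $N$, multiplying $-\ln\tau$, while $\beta$ is fixed (equal to $0$ for $K\ge 0$ and $1$ for $K=-1$). The point is that $L(-N\ln\tau)$ produces the term $+N\sum\sigma_k^{ii}\kappa_i^2$, which after combining with the $-\sum\sigma_k^{ii}\kappa_i^2$ from the commuted Simons identity leaves the coercive piece $(N-1)\sum\sigma_k^{ii}\kappa_i^2$. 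This is what absorbs the unavoidable error $-C\kappa_1$ coming from $(d_\nu^2\psi)(\nu_1,\nu_1)/\kappa_1$: one uses $(N-1)\sum\sigma_k^{ii}\kappa_i^2\ge (N-1)\sigma_k^{11}\kappa_1^2\ge (N-1)\frac{k}{n}\psi\,\kappa_1$, and then chooses $N$ large.

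In your test function $W=\log h_{\xi\xi}-\log u + a|V|^2$ the coefficient on $-\log u$ is $1$, so the $+\sum F^{ii}\kappa_i^2$ you get from $L(-\log u)$ is exactly cancelled by the $-\sum F^{ii}\kappa_i^2$ from Simons. What survives is only $\kappa_1\sum F^{ii}\kappa_i=\kappa_1\psi$ (or $k\psi\kappa_1$ if you work with $\sigma_k$), and the balance becomes $(\inf\psi - C)\kappa_1\le C'$ with $C$ depending on $\|d_\nu^2\psi\|$. Nothing in the hypotheses forces $\inf\psi>C$, so the inequality gives no bound. Your claim that ``$aL|V|^2$ for $a$ large strengthens the coercive term $\sum F^{ii}\kappa_i^2$'' is where the argument breaks: a direct computation of $\nabla_{ii}|V|^2=2\phi''\rho_i\langle E_i,V\rangle+2\phi'^2\delta_{ii}+2\phi'\tau h_{ii}$ shows $aL|V|^2$ contributes only $a\sum F^{ii}$- and $O(a)$-type terms, never $\sum F^{ii}\kappa_i^2$. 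So the large parameter is on the wrong piece; it must multiply $-\log u$.

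A secondary but nontrivial difference: the paper uses $\frac{1}{2}\ln P(\kappa)$ rather than $\log h_{\xi\xi}$. This makes the test function smooth (no eigenvalue-multiplicity issue) and, more importantly, puts the third-order terms exactly in the form required by the Guan--Ren--Wang lemma (Lemma~\ref{GRW} here), which then reduces the argument to the clean dichotomy ``either some $\kappa_i/\kappa_1\le\delta_i$, apply the lemma'' or ``$\kappa_k\ge\delta_k\kappa_1$, hence $\sigma_k\ge\delta_k^k\kappa_1^k$.'' Your index-splitting for the concavity term is in the right direction but is only sketched; with the $\log h_{11}$ formulation one has to separately control $F^{11}h_{11,1}^2/\kappa_1^2$ and the ``large $\kappa_i$'' contributions via the critical-point equation, and the bookkeeping is more delicate than you indicate.
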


\begin{proof}
It suffices to estimate from above for the largest principal curvature $\kappa_{\max} = \max_{1 \leq i \leq n} \kappa_i$ of $\Sigma$.
To construct a test function, we will make use of the following ingredients:
\[  \Phi(\rho) = \int_0^{\rho} \phi(r) \,d r \]
and the support function
\[ \tau = \bar{g}( V,\, \nu ) = \langle V,\, \nu \rangle \,=\,\Big\langle \phi(\rho) \frac{\partial}{\partial \rho}, \frac{- \nabla' \rho + \phi^2 \,\frac{\partial}{\partial \rho}}{\sqrt{\phi^4 + \phi^2 |\nabla' \rho|^2}} \Big\rangle   \]
Note that $\tau$ has a positive lower bound.
Now define the test function
\begin{equation} \label{eq2-2}
\Theta = \frac{1}{2} \ln P(\kappa) - N \,\ln \tau + \beta\, \Phi
\end{equation}
where
\[ P( \kappa ) = \kappa_1^2 + \cdots + \kappa_n^2,\]
$\beta = u_L^K$ and $N$ is a positive constant to be determined later.

Assume that $\Theta$ achieves its maximum value at ${\bf x}_0 = (z_0, \rho(z_0))\in \Sigma$.
Choose a local orthonormal frame $E_1, \ldots, E_n$ around ${\bf x}_0$ such that $h_{ij}({\bf x}_0) = \kappa_i \,\delta_{ij}$, where $\kappa_1, \ldots, \kappa_n$ are the principal curvatures of $\Sigma$ at ${\bf x}_0$ with $\kappa_1 \geq \ldots \geq \kappa_n > 0$.  Let $\nabla$ denote the Levi-Civita connection on $\Sigma$ with respect to the metric $g$. Then, at ${\bf x}_0$,
\begin{equation} \label{eq2-3}
\frac{1}{P} \,\sum\limits_l \kappa_l \,h_{lli} - N \, \frac{{\tau}_i}{\tau} + \beta \,\Phi_i = 0
\end{equation}

\begin{equation} \label{eq2-4}
\frac{1}{P} (\sum\limits_{pq} h_{pqi}^2 + \sum\limits_l \kappa_l h_{llii}) - \frac{2}{P^2}(\sum\limits_l \kappa_l h_{lli})^2 - N\, \frac{{\tau}_{ii}}{\tau} + N \,\frac{{\tau}_i^2}{{\tau}^2} \,+ \beta \,\Phi_{ii}\, \leq 0
\end{equation}

In space forms, the Codazzi equation is
\begin{equation} \label{eq2-9}
\nabla_l h_{ij} = \nabla_j h_{il}
\end{equation}
and by Gauss equation we have
\begin{equation} \label{eq2-5}
 h_{iill} = h_{llii} + \kappa_l \kappa_i^2 - \kappa_l^2 \kappa_i + K (\kappa_i - \kappa_l)
\end{equation}

Covariantly Differentiating \eqref{eq1-0} twice yields

\begin{equation} \label{eq2-6}
 \sigma_k^{ii} h_{iil} = \phi'\, d_V\psi (E_l) + \kappa_l \,d_\nu\psi (E_l)
\end{equation}

\begin{equation} \label{eq2-7}
\sigma_k^{ii} h_{iill} + \sigma_k^{pq,\, rs} h_{pql} h_{rsl} \geq - C - C \kappa_l^2 + \sum\limits_m h_{mll} \,d_{\nu}\psi (E_m)
\end{equation}
Note that  we have used the property of the conformal Killing field $V$
\[ \nabla_{E_l} V = \phi' \,E_l  \]

By \eqref{eq2-3}, \eqref{eq2-4}, \eqref{eq2-9}, \eqref{eq2-5}, \eqref{eq2-7}  as well as $\sigma_k^{ii} \kappa_i = k \,\psi$ and
\begin{equation*}
 - \sigma_k^{pq, rs} h_{pql} h_{rsl} = - \sigma_k^{pp, qq} h_{ppl} h_{qql} + \sigma_k^{pp, qq} h_{pql}^2
\end{equation*}
we have
\begin{equation} \label{eq2-8}
\begin{aligned}
& \frac{1}{P} \sum\limits_{i p q} \sigma_k^{ii} h_{pqi}^2 - \frac{2}{P^2} \sum\limits_i \sigma_k^{ii} (\sum\limits_l \kappa_l h_{lli})^2
  - \frac{1}{P} \sum\limits_{pql} \kappa_l \sigma_k^{pp, qq} h_{ppl} h_{qql} + \frac{1}{P} \sum\limits_{pql} \kappa_l \sigma_k^{pp, qq} h_{pql}^2 \\
& - \sum\limits_i \sigma_k^{ii} \kappa_i^2 - \frac{N}{\tau} \sum\limits_i \sigma_k^{ii} {\tau}_{ii} + \frac{N}{{\tau}^2} \sum\limits_i \sigma_k^{ii} {\tau}_i^2 + \beta
   \sigma_k^{ii}\,\Phi_{ii} \\
& + K \, \sum\limits_i \sigma_k^{ii} + \frac{N}{\tau} \sum\limits_m {\tau}_m d_{\nu}\psi (E_m) - \beta\,\sum\limits_m \Phi_m\,d_{\nu} \psi (E_m) - \frac{C}{P} \sum\limits_l \kappa_l - \frac{C}{P} \sum\limits_l \kappa_l^3 \leq 0
\end{aligned}
\end{equation}
Applying \eqref{eq2-6} as well as the following equations which can be derived by straight forward calculation (see Lemma 2.2 and Lemma 2.6 in \cite{GL13} for the proof)
\[ \Phi_i = \phi(\rho) \,\rho_i, \quad\quad \Phi_{ii} = \phi' - \tau \,\kappa_i\]
\[ {\tau}_i = \phi(\rho)\, \rho_i\, \kappa_i \]
\[ {\tau}_{ii} = \phi(\rho)\,\sum\limits_m \rho_m\, h_{iim} + \phi'(\rho)\,\kappa_i - \tau \,\kappa_i^2 \]
\eqref{eq2-8} becomes

\begin{equation} \label{eq2-10}
\begin{aligned}
& \frac{1}{P} \sum\limits_{i p q} \sigma_k^{ii} h_{pqi}^2
   -\frac{2}{P^2} \sum\limits_i \sigma_k^{ii} (\sum\limits_l \kappa_l h_{lli})^2
   - \frac{1}{P} \sum\limits_{pql} \kappa_l \sigma_k^{pp, qq} h_{ppl} h_{qql} + \frac{1}{P} \sum\limits_{pql} \kappa_l \sigma_k^{pp, qq} h_{pql}^2 \\
& + (N - 1) \sum\limits_i \sigma_k^{ii} \kappa_i^2 + N \frac{\phi^2}{{\tau}^2} \sum\limits_i \sigma_k^{ii} \rho_i^2 \kappa_i^2 - N \frac{k \psi \phi'}{\tau} - \beta \,\tau\, k \, \psi + ( \beta \phi' + K ) \sum\limits_i \sigma_k^{ii} \\
& - N \frac{\phi \,\phi'}{\tau} \sum\limits_m \rho_m d_{V}\psi (E_m) - \beta \,\phi(\rho)\,\sum\limits_m \rho_m \,d_{\nu}\psi(E_m) - \frac{C}{P} \sum\limits_l \kappa_l - \frac{C}{P} \sum\limits_l \kappa_l^3 \leq 0
\end{aligned}
\end{equation}
Now we apply a result from \cite{GRW15} (see Lemma 2.2 and Corollary 4.4 in \cite{GRW15}) for tackling third order derivatives.

\begin{lemma} \label{GRW}
There exists a positive constant $A$ and a finite sequence of positive numbers $\{ \delta_i \}_{i = 1}^k$ such that if the inequality
$\kappa_i/\kappa_1 \leq \delta_i$ holds for some $1 \leq i \leq k$, then
\begin{equation*}
0 \leq \frac{1}{P} \big[ \sum\limits_l \kappa_l ( A (\sigma_k)_l^2 - \sigma_k^{pp, qq} h_{ppl} h_{qql} + \sigma_k^{pp, qq} h_{pql}^2 ) + \sum\limits_{ipq} \sigma_k^{ii} h_{pqi}^2 \big] - \frac{2}{P^2} \sum\limits_i \sigma_k^{ii}(\sum\limits_l \kappa_l h_{lli})^2
\end{equation*}
\end{lemma}

Let $A$ and $\{ \delta_i \}_{i = 1}^k$ be given as in Lemma \ref{GRW}.
We divide our discussion into two cases.

Case (i):  If there exists some $2 \leq i \leq k$ such that $\kappa_i \leq \delta_i \,\kappa_1$, by \eqref{eq2-6} and Lemma \ref{GRW}, \eqref{eq2-10} reduces to

\[ (N - 1)\,\sum\limits_i \sigma_k^{ii} \kappa_i^2 - C N - C \beta - \frac{C (A + 1)}{P} \sum\limits_l \kappa_l - \frac{C (A + 1)}{P} \sum\limits_l \kappa_l^3 \,\leq 0 \]
Here we have used the fact that the support function $\tau$ has a positive lower bound.

Note that
\[ \sigma_k^{11} \,\kappa_1 \,\geq\, \frac{k}{n} \,\sigma_k  \]
It follows that
\[ ((N - 1)\, \frac{k}{n}\, \psi - C)\,\kappa_1 \leq C\,N \]
Choose $N$ sufficiently large we obtain $\kappa_1 \leq C(N)$.

Case (ii): If case (i) does not hold, which means $\kappa_k \geq \delta_k\,\kappa_1$, then
\[ \sigma_k \geq \kappa_1 \kappa_2 \cdots \kappa_k \geq \delta_k^k \kappa_1^k\]
and an upper bound of $\kappa_1$ follows.
\end{proof}

\begin{rem}
After the proof of Theorem \ref{Theorem 1}, the author noticed \cite{CLW18} for closed hypersurfaces in warped product spaces, where global curvature estimates for convex hypersurfaces are also derived. Though our test function appears the same as \cite{CLW18}, the choice of the coefficients is different. In space forms, the Gauss equation and Codazzi equation are simpler and hence $\beta$ can be chosen to be a fixed number depending on the sectional curvature $K$. In particular, when $K \geq 0$, $\beta$ can be zero. In \cite{CLW18}, $N$ is chosen to be large, and $\beta$ is chosen to be further large.
\end{rem}

\vspace{6mm}

\section{Existence in $\mathbb{R}^{n+1}$ and $\mathbb{H}^{n+1}$}

\vspace{4mm}

In this section and the next section, we confine ourselves to prescribed Gauss curvature equation (the case when $k = n$).
We will use classical continuity method and degree theory developed by Y. Y. Li \cite{Li89} to prove the existence of solution to the Dirichlet problem \eqref{eq2-26}.

Under the transformation $\overline{\rho} = \zeta ( \underline{u} )$ and $\underline{u} = \eta ( \underline{v} )$, the subsolution condition \eqref{eq1-4} becomes
\begin{equation} \label{eq2-29}
\left\{ \begin{aligned}
\mathcal{G}(\nabla'^2\underline{v}, \nabla' \underline{v}, \underline{v}) \,\, \geq & \,\,\psi(z,  \underline{v},  \nabla'\underline{v})  \quad  & \mbox{in} \quad  \Omega\\
\underline{v} \, = & \,\, \varphi  \quad \quad & \mbox{on} \quad  \partial \Omega \end{aligned} \right.
\end{equation}
For convenience, denote $\mathcal{G} [ v] \, = \, \mathcal{G}(\nabla'^2 v, \nabla' v,  v)$.
Consider the following two auxiliary equations.
\begin{equation} \label{eq6-8}
\left\{ \begin{aligned} \mathcal{G} [ v ] \,\, =  & \,\, \Big( ( 1 - t ) \frac{\mathcal{G}[\underline{v}]}{ \xi (\underline{v}) } + t \,\epsilon  \Big)  \,\xi (v) \quad\quad & \mbox{in} \quad \Omega \\
v \,\,  = & \,\, \underline{v} \quad \quad & \mbox{on} \quad \partial\Omega \end{aligned} \right.
\end{equation}
and
\begin{equation} \label{eq6-9}
\left\{ \begin{aligned} \mathcal{G} [ v ] \,\, =  & \,\,( 1 - t ) \,\epsilon \,\xi (v) +  t \, \psi(z, v, \nabla' v) \quad\quad & \mbox{in} \quad \Omega \\
 v \,\,  = & \,\, \underline{v} \quad \quad & \mbox{on} \quad \partial\Omega \end{aligned} \right.
\end{equation}
where $t \in [0, 1]$, $\epsilon$ is a small positive constant such that
\begin{equation} \label{eq6-20}
\mathcal{G} [\underline{v}]  \, > \, \epsilon \,\,\xi(\underline{v}) \quad\mbox{in}\quad \Omega
\end{equation}
and $\xi(v) = e^{2v}$ if $K = 0$ while $\xi(v) = \sinh v$ if $K = - 1$.

The existence result in $\mathbb{R}^{n+1}$ was given in \cite{Su16} where the author assumed the existence of a strict subsolution. In this section, we will consider the cases when $K = 0$ and $K = -1$ assuming a subsolution.

\begin{lemma} \label{Lemma6-1}
Let $\psi(z)$ be a positive function defined on $\Omega$. For $z \in \Omega$ and a strictly locally convex function $v$ near $z$, if
\[\mathcal{G} [ v ] (z) =  F(a_{ij}[v])(z) = f (\kappa [v])(z) = \psi(z) \,\xi(v)(z) \]
then
\[ \mathcal{G}_v [ v ] (z) - \,\psi(z) \,\xi'(v)(z)  < 0\]
\end{lemma}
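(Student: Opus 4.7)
The plan is to compute $\mathcal{G}_v[v]$ directly from formula \eqref{eq6-1} by chain rule and then to invoke the degree-one homogeneity of $F=\sigma_n^{1/n}$ via Euler's identity
\[ F^{ij}(A)\,a_{ij} \;=\; F(A) \;=\; \mathcal{G}[v]. \]
The argument naturally splits into the cases $K=0$ and $K=-1$ because $\eta$ and $\xi$ take different forms, but the homogeneity of $F$ is the common engine.

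In the Euclidean case, $\eta(v)=e^v$ satisfies $\eta=\eta'=\eta''$, so inspection of \eqref{eq6-1} gives immediately $\partial a_{ij}/\partial v = a_{ij}$. Euler's identity then yields $\mathcal{G}_v[v]=F^{ij}a_{ij}=\mathcal{G}[v]=\psi e^{2v}$, and since $\xi'(v)=2e^{2v}$ one obtains $\mathcal{G}_v[v]-\psi\xi'(v)=-\psi e^{2v}<0$.

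In the hyperbolic case $\eta(v)=\cosh v$ and $\eta'(v)=\sinh v$, so $\eta''=\eta$ but $\eta\neq\eta'$. I would split the matrix \eqref{eq6-1} into its two pieces
\[ B_{ij} \;=\; \frac{\eta(v)}{w}\,\delta_{ij}, \qquad Q_{ij} \;=\; \frac{\eta'(v)}{w}\,\tilde{\gamma}^{ik}\,\nabla'_{kl}v\,\tilde{\gamma}^{lj}, \]
so that $a_{ij}=B_{ij}+Q_{ij}$. Using $\eta''=\eta$, a direct differentiation shows
\[ \frac{\partial a_{ij}}{\partial v} \;=\; \tanh v\cdot B_{ij} \;+\; \coth v\cdot Q_{ij}, \]
i.e., the hyperbolic prefactors on the two pieces are exactly swapped. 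Contracting with $F^{ij}$, using Euler's identity $F^{ij}B_{ij}+F^{ij}Q_{ij}=\mathcal{G}[v]=\psi\sinh v$ to eliminate $F^{ij}Q_{ij}$, and noting $\xi'(v)=\cosh v$ then collapses everything to
\[ \mathcal{G}_v[v]-\psi\xi'(v) \;=\; (\tanh v-\coth v)\,F^{ij}B_{ij}. \]
Since $v>v_L^{-1}=0$ one has $\tanh v<\coth v$, and since $F^{ij}>0$ on $\Gamma_n$ by \eqref{eq1-5} while $B_{ij}>0$, the right-hand side is strictly negative.

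The only real obstacle is organizational: keeping careful track of which powers of $\eta$ sit on which piece of $a_{ij}$ when differentiating. Conceptually, the degree-one homogeneity of $\sigma_n^{1/n}$ is what collapses the computation to a one-line comparison of elementary functions, which is exactly the sign needed to make the linearized operators associated with the auxiliary equations \eqref{eq6-8}--\eqref{eq6-9} invertible in the existence argument that follows.
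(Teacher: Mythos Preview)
Your proof is correct and follows essentially the same route as the paper: compute $\partial a_{ij}/\partial v$ from \eqref{eq6-1}, contract with $F^{ij}$, and compare with $\psi\,\xi'$. The only organizational differences are that the paper derives a single unified formula
\[
\frac{\partial a_{ij}}{\partial v}=\frac{K}{w\,\eta'(v)}\,\delta_{ij}+\frac{\eta(v)}{\eta'(v)}\,a_{ij}
\]
(using $\eta'^2-\eta^2=K$) rather than splitting into the cases $K=0$ and $K=-1$, and it invokes the concavity inequality $\sum f_i\kappa_i\le f(\kappa)$ (from $f$ concave with $f(0)=0$) in place of your Euler identity $\sum f_i\kappa_i=f(\kappa)$. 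Since $f=\sigma_k^{1/k}$ is homogeneous of degree one, these coincide here; the paper's phrasing would extend to non-homogeneous concave $f$, while yours gives a slightly cleaner collapse in the hyperbolic case.
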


\begin{proof}
From \eqref{eq6-1} we have
\begin{equation*}
\begin{aligned}
\frac{\partial a_{ij}}{\partial v}\, =  & \, \frac{1}{w} \big(\, \eta'(v)\, \delta_{ij} \,+ \,\eta(v) \tilde{\gamma}^{ik} \nabla'_{kl} v  \,\tilde{\gamma}^{lj}\big) \\
\, = & \, \frac{\eta'^2(v) - \eta^2(v)}{w \eta'(v)} \,\delta_{ij} + \frac{\eta(v)}{\eta'(v)} a_{ij} \,=\,\frac{K}{w \eta'(v)}\, \delta_{ij} + \frac{\eta(v)}{\eta'(v)} a_{ij}
\end{aligned}
\end{equation*}
Therefore
\begin{equation*}
\mathcal{G}_v =  \,\frac{K}{w \eta'(v)}\, \sum f_i + \frac{\eta(v)}{\eta'(v)} \,F^{ij} a_{ij} \\
=  \,\frac{K}{w \eta'(v)}\, \sum f_i + \frac{\eta(v)}{\eta'(v)} \,\sum f_i \kappa_i
\end{equation*}
Since $ \sum f_i \kappa_i \leq \psi(z) \,\xi(v)$ by the concavity of $f$ and $f(0) = 0$,
\[ \mathcal{G}_v [ v ] - \,\psi(z) \,\xi'(v) \leq  \frac{K}{w \eta'(v)}\, \sum f_i + ( \frac{\eta(v)}{\eta'(v)} - \frac{\xi'(v)}{\xi(v)} ) \,\sum f_i \kappa_i  < 0 \]
\end{proof}

\begin{lemma}  \label{Lemma6-2}
For any fixed $t \in [0, 1]$, if $\underline{V}$ and $v$ are respectively strictly locally convex subsolution and solution to \eqref{eq6-8}, then $v \geq \underline{V}$.  Thus the Dirichlet problem \eqref{eq6-8} has at most one strictly locally convex solution.
\end{lemma}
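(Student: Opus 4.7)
The plan is to prove Lemma \ref{Lemma6-2} by a weighted maximum principle that exploits the one-homogeneity of $f=\sigma_n^{1/n}$ together with the $K$-dependent choice of $\xi$ in \eqref{eq6-8}.

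First I would set up the contradiction hypothesis: assume $\max_{\overline\Omega}(\underline{V}-v)>0$, attained at some $z_0\in\Omega$ (necessarily interior since $\underline V=v=\underline v$ on $\partial\Omega$). At $z_0$ the standard max-point information reads $\underline{V}(z_0)>v(z_0)$, $\nabla'\underline{V}(z_0)=\nabla'v(z_0)$, and $\nabla'^2\underline{V}(z_0)\leq\nabla'^2 v(z_0)$ in the matrix sense.

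The heart of the argument is to divide the equation \eqref{eq6-8} by $\xi(v)$ and invoke one-homogeneity of $f$. This rewrites \eqref{eq6-8} as $f(M[v])=\psi(z)$, where $\psi(z)=(1-t)\mathcal{G}[\underline v]/\xi(\underline v)+t\epsilon$ and, by \eqref{eq6-1},
\[
M_{ij}[v]\,:=\,\frac{a_{ij}[v]}{\xi(v)}\,=\,\frac{1}{w}\left(\frac{\eta(v)}{\xi(v)}\,\delta_{ij}+\frac{\eta'(v)}{\xi(v)}\,\tilde\gamma^{ik}\nabla'_{kl}v\,\tilde\gamma^{lj}\right).
\]
For $K=0$ the coefficients satisfy $\eta/\xi=\eta'/\xi=e^{-v}$, giving $M[v]=(e^{-v}/w)(I+\tilde\gamma\nabla'^2 v\,\tilde\gamma)$; for $K=-1$ they satisfy $\eta/\xi=\coth v$ and $\eta'/\xi=1$, giving $M[v]=(1/w)(\coth v\,I+\tilde\gamma\nabla'^2 v\,\tilde\gamma)$. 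Strict local convexity of $v$ and $\underline{V}$ places both $M[v]$ and $M[\underline V]$ inside the admissible cone $\Gamma_n$.

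At $z_0$ the quantities $w$ and $\tilde\gamma$ agree for $v$ and $\underline{V}$, so a direct expansion yields $M[v](z_0)-M[\underline{V}](z_0)>0$ strictly: for $K=0$ the difference decomposes as
\[
\tfrac{e^{-v}}{w}\,\tilde\gamma(\nabla'^2 v-\nabla'^2\underline V)\tilde\gamma\;+\;\tfrac{e^{-v}-e^{-\underline V}}{w}\bigl(I+\tilde\gamma\nabla'^2\underline V\tilde\gamma\bigr),
\]
and for $K=-1$ it simplifies to $\tfrac{1}{w}\bigl[(\coth v-\coth\underline V)I+\tilde\gamma(\nabla'^2 v-\nabla'^2\underline V)\tilde\gamma\bigr]$, with strict positivity in both cases provided by $\underline V(z_0)>v(z_0)$. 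Strict monotonicity of $f$ on $\Gamma_n$ then gives $f(M[\underline V](z_0))<f(M[v](z_0))=\psi(z_0)$, contradicting the subsolution inequality $f(M[\underline V])\geq\psi$ at $z_0$. Uniqueness follows at once, since any two strictly locally convex solutions are each subsolutions of the other. The delicate point to watch is the strict positive definiteness of $M[v](z_0)-M[\underline V](z_0)$, and this is exactly where the precise choice of $\xi$ in \eqref{eq6-8} is used: for $K=0$ the coincidence $\eta/\xi=\eta'/\xi$ allows a clean factorization out of the scalar $e^{-v}/w$, while for $K=-1$ the identity $\eta'/\xi\equiv 1$ localizes the $v$-dependence to the diagonal $\coth v\,I$ term, which is strictly decreasing. (An alternative infinitesimal phrasing would combine Lemma \ref{Lemma6-1} with the mean value theorem, but the matrix inequality above has the virtue of making the role of $\xi$ transparent.)
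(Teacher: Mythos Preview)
Your argument is correct and genuinely different from the paper's. The paper argues by interpolation: it forms the linear path $v[s]=(1-s)v+s\underline V$, checks that each $v[s]$ remains strictly locally convex at $z_0$ (using that $(\eta/\eta')'\le 0$), and then applies a Rolle-type argument to $a(s)=\mathcal G[v[s]]-\psi(z)\,\xi(v[s])$ to produce a point $s_0$ where $a(s_0)=0$ and $a'(s_0)\ge 0$; Lemma~\ref{Lemma6-1} then forces $a'(s_0)<0$, the contradiction. Your route bypasses both the deformation and Lemma~\ref{Lemma6-1}: you exploit the one-homogeneity of $f=\sigma_n^{1/n}$ to absorb $\xi$ into the matrix, and the case-by-case identities $\eta/\xi=\eta'/\xi=e^{-v}$ (for $K=0$) and $\eta'/\xi\equiv 1$ (for $K=-1$) let you read off $M[v](z_0)-M[\underline V](z_0)>0$ directly. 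This is shorter and, as you say, makes the role of the particular $\xi$ completely explicit. The paper's approach costs more here but pays off elsewhere: Lemma~\ref{Lemma6-1} is reused in Theorem~\ref{Theorem6-1} to show invertibility of the linearized operator in the continuity method, and the deformation technique does not formally require one-homogeneity (only concavity and $f(0)=0$), so it would survive for more general $f$.
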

\begin{proof}
If not, then $\underline{V} - v$ achieves a positive maximum at some $z_0 \in \Omega$. We have
\begin{equation} \label{eq6-14}
\underline{V}(z_0) > v(z_0),\quad \nabla'\underline{V}(z_0) = \nabla' v(z_0), \quad \nabla'^2\underline{V}(z_0) \leq \nabla'^2 v(z_0)
\end{equation}
Consider the deformation $v[s] := (1 - s)\, v + s \,\underline{V}$ for $s \in [0, 1]$.  In view of \eqref{eq6-1}, we can verify that $v[s]$ is strictly locally convex near $z_0$ for any $s \in [0, 1]$. In fact,  at $z_0$,
\begin{equation*}
\begin{aligned}
 & \,\, \eta(v[s]) \,\delta_{ij} \,+ \,\eta'(v[s]) \,\, \tilde{\gamma}^{ik} \,\,  \nabla'_{kl} v[s] \,\, \tilde{\gamma}^{lj}\,  \geq \,  \eta(v[s]) \,\delta_{ij} \,+ \,\eta'(v[s]) \, \tilde{\gamma}^{ik} \,\,\nabla'_{kl} \underline{V} \,\, \tilde{\gamma}^{lj} \\
 = \,\, & \eta'(v[s]) \,\Big( \frac{\eta(v[s])}{\eta'(v[s])} - \frac{\eta(\underline{V})}{\eta'(\underline{V})}\, \Big) \,\delta_{ij}
  +  \,\frac{\eta'(v[s])}{\eta'(\underline{V})} \Big( \eta(\underline{V}) \,\delta_{ij} \,+ \,\eta'(\underline{V})\,\tilde{\gamma}^{ik} \,\,\nabla'_{kl} \underline{V} \,\, \tilde{\gamma}^{lj} \Big ) > 0
\end{aligned}
\end{equation*}
where the last inequality is true since
\[\Big(\frac{\eta}{\eta'}\Big)' (v) \leq  0. \]

Now we define a differentiable function of $s \in [0, 1]$,
\[ a(s) := \mathcal{G} \Big[ v[s] \Big] \,-\, \Big(  ( 1 - t ) \frac{\mathcal{G}[ \underline{v} ]}{ \xi (\underline{v}) } + t \,\epsilon \Big)  \,\xi ( v[s]) \,\Big\vert_{z_0} \]
Since
\[ a(0) = \mathcal{G} [ v ] \,-\, \Big( ( 1 - t ) \frac{\mathcal{G}[ \underline{v} ]}{ \xi (\underline{v}) } + t \,\epsilon  \Big)  \,\xi ( v ) \, = 0 \]
and
\[ a(1) = \mathcal{G} [\underline{V}] \,-\, \Big(  ( 1 - t ) \frac{\mathcal{G}[ \underline{v} ] }{ \xi (\underline{v}) } + t \,\epsilon \Big)  \,\xi (\underline{V}) \, \geq 0,\]
there exists $s_0 \in [0, 1]$ such that $a(s_0) = 0$ and $a'(s_0) \geq 0$, that is,
\begin{equation}\label{eq6-15}
\mathcal{G} \Big[ v[s_0] \Big] (z_0) \,=\, \Big( ( 1 - t ) \frac{\mathcal{G}[ \underline{v} ]}{ \xi (\underline{v}) }  + t \,\epsilon \Big)  \,\xi ( v[s_0] ) (z_0) \,
\end{equation}
and
\begin{equation} \label{eq6-16}
\begin{aligned}
&\mathcal{G}^{ij}\Big[  v[s_0]  \Big] \nabla'_{ij}  (\underline{V} - v)(z_0)
 + \mathcal{G}^i \Big[   v[s_0]  \Big] \nabla'_i  (\underline{V} - v)(z_0)
\\ & +  \left(\mathcal{G}_v \Big[  v[s_0] \Big] - \Big( ( 1 - t ) \frac{\mathcal{G}[\underline{v}] }{ \xi (\underline{v}) } + t \,\epsilon  \Big) \xi'( v[s_0] ) \right)  (\underline{V} - v)(z_0) \geq 0
\end{aligned}
\end{equation}
However, by \eqref{eq6-14}, \eqref{eq6-15} and Lemma \ref{Lemma6-1}, the above expression should be strictly less than $0$, which is a contradiction.
\end{proof}

\vspace{2mm}

\begin{thm} \label{Theorem6-1}
For any $t \in [0, 1]$, the Dirichlet problem \eqref{eq6-8} has a unique strictly locally convex solution $v$, which satisfies $v \geq \underline{v}$ in $\Omega$.
\end{thm}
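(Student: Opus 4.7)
The plan is to establish existence via the continuity method on $t \in [0,1]$; uniqueness and $v \geq \underline{v}$ then follow at once from Lemma \ref{Lemma6-2} (applied with $\underline{V} = \underline{v}$, or with $\underline{V}$ a putative second solution). The starting observation is that $\underline{v}$ is itself a strictly locally convex subsolution of \eqref{eq6-8} for every $t \in [0,1]$: substituting $v = \underline{v}$ reduces the required inequality to $\mathcal{G}[\underline{v}] \geq \epsilon\,\xi(\underline{v})$, which holds by \eqref{eq6-20}. In particular $\underline{v}$ solves \eqref{eq6-8} at $t = 0$, so the set
\[ T \,:=\, \{\, t \in [0,1] \,:\, \text{\eqref{eq6-8} admits a strictly locally convex solution in } C^{4,\alpha}(\overline{\Omega}) \,\} \]
is nonempty, and the goal reduces to showing that $T$ is both open and closed.

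For openness, I would apply the implicit function theorem at any $t_0 \in T$ with solution $v_0$. Writing $\psi_t(z) := (1-t)\,\mathcal{G}[\underline{v}]/\xi(\underline{v}) + t\,\epsilon$, the linearization of \eqref{eq6-8} in $v$ at $v_0$ is
\[ L w \,=\, \mathcal{G}^{ij}[v_0]\,\nabla'_{ij} w \,+\, \mathcal{G}^i[v_0]\,\nabla'_i w \,+\, \Big(\mathcal{G}_v[v_0] \,-\, \psi_{t_0}(z)\,\xi'(v_0)\Big)\, w. \]
At $v_0$ one has $\mathcal{G}[v_0] = \psi_{t_0}(z)\,\xi(v_0)$, so Lemma \ref{Lemma6-1} applied with $\psi = \psi_{t_0}$ forces the zeroth-order coefficient of $L$ to be strictly negative. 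Since $\{\mathcal{G}^{ij}[v_0]\}$ is positive definite on strictly locally convex $v_0$, $L$ is a uniformly elliptic linear operator with strictly negative zeroth-order term, hence an isomorphism from $\{w \in C^{4,\alpha}(\overline{\Omega}) : w|_{\partial\Omega} = 0\}$ to $C^{2,\alpha}(\overline{\Omega})$ by the Fredholm alternative and the classical maximum principle. Strict local convexity is an open condition in $C^2$, so the solutions produced for $t$ near $t_0$ remain admissible.

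For closedness, the task is uniform a priori $C^{2,\alpha}$ estimates along the family. A lower bound $v \geq \underline{v}$ is automatic from Lemma \ref{Lemma6-2}; an upper bound and a gradient bound can be produced after pushing everything back to the radial-graph variable $\rho = \zeta(\eta(v))$ via the argument of Lemma \ref{Lemma5-1}, which is where \eqref{eq1-9} enters. The right-hand side of \eqref{eq6-8} is uniformly positive and smooth once $v$ is $C^0$-controlled, because $\mathcal{G}[\underline{v}]/\xi(\underline{v}) \geq \epsilon$ by \eqref{eq6-20}; reformulated in $\rho$ this is an equation of the type \eqref{eq1-1} with $k = n$, so Theorem \ref{Theorem1-1} yields a uniform $C^2$ bound. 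Because we are in the Gauss-curvature case, the upper bound on all $\kappa_i$ combined with $\prod \kappa_i = \psi_t\,\xi(v) \geq c_0 > 0$ immediately gives a uniform positive lower bound on each $\kappa_i$, so strict local convexity is preserved with quantitative constants. The equation is then uniformly elliptic and concave in the Hessian; Evans--Krylov delivers a uniform $C^{2,\alpha}$ estimate, and Schauder theory upgrades to $C^{4,\alpha}$, completing closedness.

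The main technical obstacle is this preservation of strict local convexity with uniform constants: only for $k = n$ does the upper bound on $\kappa_{\max}$ coming from Theorem \ref{Theorem1-1} automatically translate into a lower bound on $\kappa_{\min}$, and this in turn depends critically on the uniform positivity of $\psi_t$ built into the auxiliary family by the choice of $\epsilon$ in \eqref{eq6-20}. Once both ingredients are in place, combining openness, closedness and connectedness of $[0,1]$ forces $T = [0,1]$, and Lemma \ref{Lemma6-2} delivers uniqueness and the inequality $v \geq \underline{v}$.
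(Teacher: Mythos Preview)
Your proposal is correct and follows essentially the same route as the paper: both arguments run the continuity method starting from $\underline{v}$ at $t=0$, invoke Lemma \ref{Lemma6-1} to make the linearized operator invertible for openness, use Lemma \ref{Lemma6-2} for uniqueness and $v\geq\underline{v}$, and close the set of admissible parameters via the uniform $C^2$ estimates of Sections 3--4 together with the $k=n$ lower bound on principal curvatures and Evans--Krylov. The only cosmetic difference is that the paper carries out the continuity in $C^{2,\alpha}$ rather than $C^{4,\alpha}$, and phrases the estimate step directly through \eqref{eq4-2} rather than by appeal to Theorem \ref{Theorem1-1}.
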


\begin{proof}
Uniqueness is proved in Lemma \ref{Lemma6-2}. We prove the existence using standard continuity method. Recall that $u$ and $v$ are related by transformation \eqref{eq6-2}. Hence the $C^2$ estimate \eqref{eq4-2} established in section 3 and 4 implies the $C^2$ bound for strictly locally convex solutions $v$ of \eqref{eq6-8} with $v \geq \underline{v}$, which in turn gives an upper bound for all principal curvatures of the radial graph.
Since $f = 0$ on $\partial\Gamma_n$, the principal curvatures admit a uniform positive lower bound,
which implies that equation \eqref{eq6-8} is uniformly elliptic for strictly locally convex solutions $v$ with $v \geq \underline{v}$.
We can then apply Evans-Krylov theory \cite{Evans, Krylov} to obtain
\begin{equation} \label{eq6-10}
\Vert v \Vert_{C^{2, \alpha} (\overline{\Omega})}  \leq C
\end{equation}
where $C$ is independent of $t$.

Now we consider
\[ C_0^{2, \alpha} (\overline{\Omega}) := \{ w \in C^{2, \alpha}( \overline{\Omega} ) \,| \,w = 0 \,\, \mbox{on} \,\, \partial\Omega \}, \]
which is a subspace of $C^{2, \alpha}( \overline{\Omega} )$. Obviously,
\[ \mathcal{U} := \left\{ w \in C_0^{2, \alpha} (\overline{\Omega}) \,\Big| \, \underline{v} + w \,\,\mbox{is}\,\,\mbox{stricly}\,\,\mbox{locally}\,\,\mbox{convex}\,\,  \right\} \]
is an open subset of $C_0^{2, \alpha} (\overline{\Omega})$.
Construct a map $\mathcal{L}: \,\mathcal{U} \times [ 0, 1 ] \rightarrow C^{\alpha}(\overline{\Omega})$,
\[ \mathcal{L} ( w, t ) = \mathcal{G} [ \underline{v} + w] \, - \, \Big( ( 1 - t ) \frac{\mathcal{G}[\underline{v}]}{ \xi (\underline{v}) }  + t \,\epsilon  \Big)  \,\xi (\underline{v} + w)  \]
Set
\[ \mathcal{S} = \{ t \in [0, 1] \,|\, \mathcal{L}(w, t) = 0 \,\,\mbox{has}\,\,\mbox{a}\,\,\mbox{solution}\,\,\mbox{in}\,\,\mathcal{U}\, \} \]

First note that
\[ \mathcal{L}(0, 0) = \mathcal{G} [ \underline{v} ] \, -  \frac{\mathcal{G}[\underline{v}]}{ \xi (\underline{v}) }  \,\xi (\underline{v}) = 0  \]
hence $0 \in \mathcal{S}$ and $\mathcal{S} \neq \emptyset$.

$\mathcal{S}$ is open in $[0, 1]$. In fact, for any $t_0 \in \mathcal{S}$, there exists $w_0 \in \mathcal{U}$ such that $\mathcal{L} ( w_0, t_0 ) = 0$. The Fr\'echet derivative of $\mathcal{L}$ with respect to $w$ at $(w_0, t_0)$ is a linear elliptic operator from $C^{2, \alpha}_0 (\overline{\Omega})$ to $C^{\alpha}(\overline{\Omega})$,
\[
\begin{aligned}
\mathcal{L}_w \big|_{(w_0, t_0)} ( h )  \,\,=  \,\,  \mathcal{G}^{ij} [  \underline{v} + w_0 ] \,\nabla'_{ij} h  + \mathcal{G}^i [ \underline{v} + w_0 ] \, \nabla'_i h   \\
+ \left(\mathcal{G}_v [  \underline{v} + w_0 ] - \Big( ( 1 - t_0 ) \frac{\mathcal{G}[\underline{v}]}{ \xi (\underline{v}) } + t_0\, \epsilon  \Big) \, \xi'(\underline{v} + w_0) \right) h
\end{aligned}
\]
By Lemma \ref{Lemma6-1}, $\mathcal{L}_w \big|_{(w_0, t_0)}$ is invertible. Hence by implicit function theorem, a neighborhood of $t_0$ is also contained in $\mathcal{S}$.

$\mathcal{S}$ is closed in $[0, 1]$. Let $t_i$ be a sequence in $\mathcal{S}$ converging to $t_0 \in [0, 1]$ and $w_i \in \mathcal{U}$ be the unique  solution associated with $t_i$ (the uniqueness is guaranteed by Lemma \ref{Lemma6-2}), i.e. $\mathcal{L} (w_i, t_i) = 0$. Since $\underline{v}$ is a subsolution of \eqref{eq6-8} in view of \eqref{eq6-20},  by Lemma \ref{Lemma6-2}, $w_i \geq 0$. Then by \eqref{eq6-10} we see that $v_i := \underline{v} + w_i$ is a bounded sequence in $C^{2, \alpha}(\overline{\Omega})$. Possibly passing to a subsequence $v_i$ converges to a strictly locally convex solution $v_0$ of \eqref{eq6-8} as $i \rightarrow \infty$. Obviously $w_0 := v_0 - \underline{v} \in \mathcal{U}$ and $\mathcal{L}(w_0, t_0) = 0$. Thus $t_0 \in \mathcal{S}$.
\end{proof}

Now we assume that $\underline{v}$ is not a solution of \eqref{eq2-26}, for otherwise we are done.

\begin{lemma} \label{Lemma6-3}
Let $v$ be a strictly locally convex solution of \eqref{eq6-9}. If $v \geq \underline{v}$ in $\Omega$, then
$v > \underline{v}$ in $\Omega$ and ${\bf n}(v - \underline{v}) > 0$ on $\partial\Omega$, where ${\bf n}$ is the interior unit normal to $\partial\Omega$.
\end{lemma}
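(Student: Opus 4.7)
The plan is to cast this as a strong maximum principle / Hopf boundary-point lemma for a linear elliptic inequality satisfied by $w := v - \underline{v}$. By hypothesis $w \geq 0$ in $\Omega$ and $w = 0$ on $\partial\Omega$. First I would introduce the interpolation $v[s] := \underline{v} + s w = (1-s)\underline{v} + s v$ for $s \in [0,1]$. The same calculation performed at the beginning of Lemma \ref{Lemma6-2} (using \eqref{eq6-1} together with the monotonicity $(\eta/\eta')' \leq 0$) shows that $v[s]$ is strictly locally convex in $\Omega$ for every $s \in [0,1]$, so $\mathcal{G}$ is smooth and elliptic along the path.

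Next I would linearize. Writing
\[ \mathcal{G}[v] - \mathcal{G}[\underline{v}] \,=\, \int_0^1 \frac{d}{ds}\mathcal{G}[v[s]]\, ds \,=\, a^{ij}(z)\,\nabla'_{ij} w + b^i(z)\,\nabla'_i w + c(z)\, w, \]
with $a^{ij}(z) = \int_0^1 \mathcal{G}^{ij}[v[s]]\,ds$ uniformly positive-definite and $b^i, c$ bounded (the bounds coming from the already-established $C^2$ estimate \eqref{eq4-2} and the associated positive lower bound on the principal curvatures along the path). A parallel application of the mean value theorem to the right-hand side of \eqref{eq6-9} produces bounded $\tilde b^i, \tilde c$ with
\[ (1-t)\epsilon\bigl[\xi(v) - \xi(\underline{v})\bigr] + t\bigl[\psi(z,v,\nabla' v) - \psi(z,\underline{v},\nabla'\underline{v})\bigr] = \tilde b^i(z)\,\nabla'_i w + \tilde c(z)\, w. \]
Subtracting the subsolution inequality for $\underline{v}$ (i.e.\ \eqref{eq2-29} combined with \eqref{eq6-20}) from the equation for $v$ then yields
\[ L w \,:=\, a^{ij}\,\nabla'_{ij} w + (b^i - \tilde b^i)\,\nabla'_i w + (c - \tilde c)\, w \,\leq\, 0 \quad\mbox{in}\quad \Omega, \]
with \emph{strict} inequality somewhere in $\Omega$: for $t \in [0,1)$ this strictness comes from \eqref{eq6-20}, and for $t = 1$ from the standing assumption that $\underline{v}$ is not a solution of \eqref{eq2-26}.

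Now I would run the strong maximum principle. Suppose $w(z_0) = 0$ at some $z_0 \in \Omega$; then $w$ has an interior minimum of $0$. To bypass the lack of a sign on the zeroth-order coefficient $c - \tilde c$, replace $L$ by $\hat L := L - M$ with $M \geq \sup_\Omega |c - \tilde c|$. Since $w \geq 0$, one has $\hat L w \leq -M w \leq 0$ and $\hat L$ has zeroth-order coefficient $c - \tilde c - M \leq 0$, so the classical strong maximum principle forces $w \equiv 0$ in $\Omega$. But this contradicts the strict subsolution property just noted. Hence $w > 0$ throughout $\Omega$. For any $z_0 \in \partial\Omega$, the smoothness of $\partial\Omega$ gives an interior sphere at $z_0$; the same reduction $L \rightsquigarrow \hat L$ reduces us to a standard Hopf boundary-point lemma, producing ${\bf n}(v - \underline{v})(z_0) = {\bf n} w(z_0) > 0$.

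The only nontrivial step is verifying that the path $v[s]$ stays strictly locally convex so that $a^{ij}$ is uniformly positive-definite; this is precisely the computation already performed in Lemma \ref{Lemma6-2}, so it carries over verbatim and no new obstacle arises. Everything else is the standard strong maximum principle / Hopf toolkit applied to a linear elliptic inequality with bounded coefficients.
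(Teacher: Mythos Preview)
There is a genuine gap in your argument: the claim that the interpolation $v[s] = (1-s)\underline{v} + s v$ remains strictly locally convex for all $s\in[0,1]$ and all $z\in\Omega$ is not justified, and it does \emph{not} follow from the computation in Lemma~\ref{Lemma6-2}. That computation was performed at a single point $z_0$ where two special facts held simultaneously: $\nabla'\underline{V}(z_0)=\nabla' v(z_0)$ (so $\tilde{\gamma}^{ik}$ and $w$ were the same for every $v[s]$) and $\nabla'^2\underline{V}(z_0)\leq\nabla'^2 v(z_0)$ (so one could replace $\nabla'^2 v[s]$ by $\nabla'^2\underline{V}$). Neither is available at a generic point of $\Omega$. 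In the $v$-variable the convexity condition reads (for $K=0$, say) $B[v]:=\nabla'^2 v + \nabla' v\otimes\nabla' v + I>0$, and a direct computation gives
\[
B[v[s]] \;=\; (1-s)\,B[\underline{v}] + s\,B[v] \;-\; s(1-s)\,(\nabla'\underline{v}-\nabla' v)\otimes(\nabla'\underline{v}-\nabla' v),
\]
so the rank-one correction can destroy positivity when $\nabla'\underline{v}\neq\nabla' v$. The analogous obstruction occurs for $K=-1$. Without convexity along the path, $\mathcal{G}[v[s]]$ need not be defined (the eigenvalues may leave $\Gamma_n$), $a^{ij}(z)=\int_0^1\mathcal{G}^{ij}[v[s]]\,ds$ is not known to be positive definite, and your linear inequality $Lw\leq 0$ is not available.

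The paper circumvents exactly this point by transforming back to the $u$-variable via \eqref{eq6-2}, where the strict local convexity condition is the \emph{linear} inequality $\nabla'_{ij}u + u\,\sigma_{ij}>0$ (see \eqref{eq3-17}); a linear path in $u$ (equivalently in $\tilde u=\mu u$) therefore stays admissible automatically. The integral linearization is then carried out in $\tilde u$ in suitable Euclidean coordinate charts on $\mathbb{S}^n$, and the maximum principle and Hopf lemma of \cite{GNN} (which permit a zeroth-order coefficient of arbitrary sign once $w\geq 0$) yield the conclusion. Your overall strategy is sound; the fix is to linearize along the $u$-path rather than the $v$-path.
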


The proof of Lemma \ref{Lemma6-3} is in the Appendix since it is long.

\vspace{2mm}

\begin{thm} \label{Theorem6-2}
For any $t \in [0, 1]$, the Dirichlet problem \eqref{eq6-9} has a strictly locally convex solution $v$ satisfying $v \geq \underline{v}$ in $\Omega$. In particular, \eqref{eq2-26} has a strictly locally convex solution $v$ satisfying $v \geq \underline{v}$ in $\Omega$.
\end{thm}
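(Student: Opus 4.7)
The plan is to use the degree-theoretic continuity argument of Y.\ Y.\ Li \cite{Li89} applied to the homotopy \eqref{eq6-9}, starting from $t = 0$ (where Theorem \ref{Theorem6-1} already provides a unique strictly locally convex solution, namely the $t = 1$ endpoint of \eqref{eq6-8}) and ending at $t = 1$, which is the equation \eqref{eq2-26} we seek to solve.

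First I would establish uniform a priori estimates along the homotopy. Viewing the right-hand side $\tilde{\psi}_t(z, v, p) = (1 - t)\,\epsilon\,\xi(v) + t\,\psi(z, v, p)$ as a single function, its $C^2$ norm, positive infimum, and all other quantities controlling Theorem \ref{Theorem1-1} and Theorem \ref{Theorem 1} are bounded uniformly in $t \in [0,1]$. Hence for any strictly locally convex solution $v$ of \eqref{eq6-9} with $v \geq \underline{v}$, the $C^2$ norm of $v$ is uniformly bounded. Since $f = \sigma_n^{1/n}$ vanishes on $\partial\Gamma_n$ and $\tilde{\psi}_t$ stays above a positive constant, the principal curvatures admit a uniform positive lower bound, making \eqref{eq6-9} uniformly elliptic on the relevant solution class. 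Evans--Krylov theory then yields a uniform $C^{2,\alpha}(\overline{\Omega})$ estimate.

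Next, setting $w = v - \underline{v} \in C^{2,\alpha}_0(\overline{\Omega})$, for $R$ sufficiently large I would define
\[
\mathcal{O}_R = \Bigl\{ w \in C^{2,\alpha}_0(\overline{\Omega}) : w > 0 \text{ in } \Omega,\ {\bf n}\, w > 0 \text{ on } \partial\Omega,\ \underline{v}+w \text{ strictly locally convex},\ \|w\|_{C^{2,\alpha}} < R \Bigr\}
\]
and the map $\mathcal{M}_t(w) = \mathcal{G}[\underline{v}+w] - \tilde{\psi}_t(z, \underline{v}+w, \nabla'(\underline{v}+w))$. Any zero of $\mathcal{M}_t$ in $\overline{\mathcal{O}_R}$ corresponds to a strictly locally convex solution of \eqref{eq6-9} with $v \geq \underline{v}$; by Lemma \ref{Lemma6-3} the inequalities become strict both in $\Omega$ and in the Hopf sense on $\partial\Omega$, and by the first step combined with the positive lower bound on principal curvatures the remaining defining conditions of $\mathcal{O}_R$ are met with strict margin. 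Hence no zero lies on $\partial \mathcal{O}_R$, so $\deg(\mathcal{M}_t, \mathcal{O}_R, 0)$ is well-defined and, by homotopy invariance, independent of $t$. At $t = 0$, Theorem \ref{Theorem6-1} yields a unique zero $w_0 = v_0 - \underline{v}$, and Lemma \ref{Lemma6-1} shows that the zeroth-order coefficient of the Fr\'echet derivative $(\mathcal{M}_0)_w|_{w_0}$ is strictly negative, making it an invertible elliptic operator. Therefore $\deg(\mathcal{M}_0, \mathcal{O}_R, 0) = \pm 1$, and homotopy invariance gives a nonzero degree for every $t \in [0,1]$, producing the desired solution.

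The hard part is Lemma \ref{Lemma6-3}: without the strict inequality $v > \underline{v}$ in $\Omega$ and the Hopf-type inequality ${\bf n}(v - \underline{v}) > 0$ on $\partial\Omega$, solutions could touch $\partial \mathcal{O}_R$ and the degree would not be defined. Proving these is a delicate strong maximum principle / Hopf lemma argument, nontrivial because the equation is fully nonlinear in the Hessian and because the right-hand side depends on $\nabla' v$, so the standard linearization produces a first-order term without an a priori sign. This is why the paper defers that lemma to an appendix.
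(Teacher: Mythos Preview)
Your proposal is correct and takes essentially the same route as the paper: a degree-theoretic homotopy via Li's theory, with Theorem \ref{Theorem6-1} and Lemma \ref{Lemma6-1} used to compute the degree at $t=0$ and Lemma \ref{Lemma6-3} used to keep solutions off $\partial\mathcal{O}$. The only notable differences are cosmetic---the paper works in $C^{4,\alpha}$ rather than $C^{2,\alpha}$ and builds explicit two-sided bounds $C_2^{-1}I < \tilde{A}[\underline{v}+w] < C_2 I$ into the definition of the open set (rather than merely ``strictly locally convex'') so that $\mathcal{M}_t$ is uniformly elliptic on all of $\overline{\mathcal{O}}$, which is what Li's degree theory requires; you should tighten your $\mathcal{O}_R$ in the same way.
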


\begin{proof}
Since $f = 0$ on $\partial\Gamma_n$, the $C^{2, \alpha}$ estimate for strictly locally convex solutions $v$ of \eqref{eq6-9} with $v \geq \underline{v}$ can be established in view of \eqref{eq6-2} and \eqref{eq4-2}, which in turn yields $C^{4, \alpha}$ estimate by classical Schauder theory
\begin{equation} \label{eq6-12}
\Vert v \Vert_{C^{4,\alpha}(\overline{\Omega})} < C_4
\end{equation}
Besides, we have the estimate (see the expression in \eqref{eq6-1}),
\begin{equation} \label{eq6-13}
C_2^{-1} I <  \tilde{A}[v]  : = \{ \eta'(v) \nabla'_{ij} v + \eta(v) v_i v_j +  \eta(v) \,\delta_{ij}\}  < C_2 I    \quad \mbox{in} \quad \overline{\Omega}
\end{equation}
where $C_2$ and $C_4$ are independent of $t$.

Let $ C_0^{\,4,\alpha} (\overline{\Omega})$ be the subspace of $C^{4,\alpha}( \overline{\Omega} )$ defined by
\[ C_0^{ 4, \alpha} (\overline{\Omega}) := \{ w \in C^{ 4, \alpha}( \overline{\Omega} ) \,| \,w = 0 \,\, \mbox{on} \,\, \partial\Omega \} \]
and consider the bounded open subset
\[ \mathcal{O} := \left\{ w \in C_0^{4, \alpha} (\overline{\Omega}) \,\left\vert\,\begin{footnotesize}\begin{aligned} & w > 0 \,\,\mbox{in}\,\,\Omega, \quad\quad \nabla'_{\bf n} w > 0 \,\,\mbox{on}\,\, \partial\Omega,\\ & C_2^{-1} I < \tilde{A}[\underline{v} + w] < C_2 I   \,\, \mbox{in} \,\, \overline{\Omega} \\ & \Vert w {\Vert}_{C^{4,\alpha}(\overline{\Omega})} < C_4 + \Vert\underline{v}\Vert_{C^{4,\alpha}(\overline{\Omega})} \end{aligned}\end{footnotesize} \right.\right\} \]
Construct a map
$\mathcal{M}_t (w):  \,\mathcal{O} \times [ 0, 1 ] \rightarrow C^{2,\alpha}(\overline{\Omega})$
\[ \mathcal{M}_t (w) \,= \,\mathcal{G} [ \underline{v} + w ] \,- ( 1 - t ) \,\epsilon \,\,\xi (\underline{v} + w)\, -\,  t \,\, \psi(z, \underline{v} + w, \nabla' (\underline{v} + w))\]
Let $v^0$ be the unique solution of \eqref{eq6-8} at $t = 1$ (the existence and uniqueness are guaranteed by Theorem \ref{Theorem6-1}). Note that $v^0$ is also the solution of \eqref{eq6-9} when $t = 0$. Set $w^0 = v^0 - \underline{v}$. By Lemma \ref{Lemma6-2}, we have $w^0 \geq 0$ in $\Omega$, which in turn implies that $w^0 > 0$ in $\Omega$ and $\nabla'_{\bf n} w^0 > 0$ on $\partial\Omega$ by Lemma \ref{Lemma6-3}. Also note that $v^0$ satisfies \eqref{eq6-12} and \eqref{eq6-13}. Thus, $w^0 \in \mathcal{O}$. From
Lemma \ref{Lemma6-3}, \eqref{eq6-12} and \eqref{eq6-13} we observe that $\mathcal{M}_t(w) = 0$ has no solution on $\partial\mathcal{O}$ for any $t \in [0, 1]$. Besides, $\mathcal{M}_t$ is uniformly elliptic on $\mathcal{O}$ independent of $t$. Hence the degree of $\mathcal{M}_t$ on $\mathcal{O}$ at $0$
\[ \deg (\mathcal{M}_t, \mathcal{O}, 0)  \]
is well defined and independent of $t$. Therefore we only need to compute
$ \deg (\mathcal{M}_0, \mathcal{O}, 0) $.

Note that $\mathcal{M}_0 ( w ) = 0$ has a unique solution $w^0 \in \mathcal{O}$, and the Fr\'echet derivative of $\mathcal{M}_0$ with respect to $w$ at $w^0$ is a linear elliptic operator from $C^{4, \alpha}_0 (\overline{\Omega})$ to $C^{2, \alpha}(\overline{\Omega})$,
\[
\mathcal{M}_{0,w} |_{w^0} ( h )  =  \,  \mathcal{G}^{ij}[ v^0 ] \,\nabla'_{ij} h  + \mathcal{G}^i [ v^0 ]\,  \nabla'_i h   + \Big(\mathcal{G}_v [ v^0 ]  - \,\epsilon \,\,\xi' (v^0) \Big) \,h
\]
By Lemma \ref{Lemma6-1}
\[ \mathcal{G}_v [  v^0 ]  - \,\epsilon \,\,\xi' (v^0) < 0 \quad\mbox{in} \quad \Omega \]
Hence $\mathcal{M}_{0,w} |_{w^0}$ is invertible. By the degree theory in \cite{Li89},
\[ \deg (\mathcal{M}_0, \mathcal{O}, 0) = \deg( \mathcal{M}_{0, w}\vert_{w^0}, B_1, 0 ) = \pm 1 \neq 0 \]
where $B_1$ is the unit ball in $C_0^{4,\alpha}(\overline{\Omega})$. Thus
\[ \deg(\mathcal{M}_t, \mathcal{O}, 0) \neq 0 \quad\mbox{for}\,\,\mbox{all}\,\,t \in [0, 1]\]
which implies that the Dirichlet problem \eqref{eq6-9} has at least one strictly locally convex solution for any $t \in [0, 1]$. In particular, $t = 1$ solves the Dirichlet problem  \eqref{eq2-26}.
\end{proof}

\vspace{6mm}

\section{Existence in \, $\mathbb{S}^{n + 1}_{+}$}

\vspace{5mm}

For any $\epsilon > 0$,  we want to prove the existence of a strictly locally convex solution to the following Dirichlet problem when $K = 1$,
\begin{equation} \label{eq7-2}
\left\{
\begin{aligned}
G[u] := G( \nabla'^2 u, \nabla' u, u ) \,\,= & \,\, \psi(z, u, \nabla' u) - \epsilon  \quad\quad &\mbox{in} \quad \Omega \\
u \,\, = & \,\, \varphi   &\mbox{on} \quad \partial\Omega
\end{aligned} \right.
\end{equation}
Then a strictly locally convex solution to  \eqref{eq2-13} follows from the uniform ($\epsilon$-independent) $C^2$ estimates (established in Section 3 and 4) and approximation.

As we have seen from last section, there does not exist an auxiliary equation in $\mathbb{S}^{n+1}_+$ with an invertible linearized operator. Hence we want to build a continuity process starting from an auxiliary equation in $\mathbb{R}^{n+1}$.

For this, we first consider a continuous version of \eqref{eq3-18}. For $t \in [0, 1]$, denote
\[a^t_{ij} = \frac{- (\zeta^t)' \phi^t}{\sqrt{(\phi^t)^2 + (\zeta^t)'^2 |\nabla' u|^2}} (\gamma^t)^{ik}\, ( \nabla'_{kl} u + u \,\delta_{kl} )\,(\gamma^t)^{lj}\]
where
\[(\gamma^t)^{ik} =   \,\frac{1}{\phi^t}\, \Big( \delta_{ik} - \frac{ (\zeta^t)'^2(u) u_i u_k }{\sqrt{(\phi^t)^2 + (\zeta^t)'^2(u) |\nabla' u|^2 } ( \phi^t + \sqrt{(\phi^t)^2 + (\zeta^t)'^2(u) |\nabla' u|^2 } )}\Big)\]
and
\[ \phi^t(\rho) = \frac{\sin (t \rho)}{t} \quad \quad \quad\mbox{with} \quad\quad \rho \in \Big( 0, \,\frac{\pi}{2 \,t} \Big) \]
\[ \zeta^t(u) = \frac{1}{t} \, \mbox{arccot} \frac{u}{t} \quad\quad\quad\mbox{with} \quad\quad u \in (0, \infty) \]
Note that these geometric quantities on $\Sigma$ correspond to the background metric
\[ \bar{g}^t = d \rho^2 + (\phi^t)^2(\rho)\, \sigma \]
Geometrically, $(\mathbb{R}^{n+1}, \bar{g}^t)$ is the upper hemisphere $\mathbb{S}^{n+1}_+(\frac{1}{t})$ with center $0$ and radius $\frac{1}{t}$. The corresponding sectional curvature is $K^t = t^2$. As $t$ varies from $0$ to $1$, $\bar{g}^t$ provides a deformation from $\mathbb{R}^{n+1}$ to $\mathbb{S}^{n+1}_+$.

Define
\[ G^t [ u ] \, = \, G^t( \nabla'^2 u, \nabla' u, u ) \, = \,F( a^t_{ij} ) \]
Hence $G^1 = G$.
The following property is true by direct calculation.
\begin{prop}
$G^t [u]$ is increasing with respect to $t$.
\end{prop}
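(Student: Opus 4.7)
The plan is to reduce the monotonicity $\partial_t G^t[u] \geq 0$ to an elementary algebraic fact by identifying a clean structure for $a^t_{ij}$. At a fixed point $z_0 \in \Omega$, I would choose a local orthonormal frame on $\mathbb{S}^n$ so that $\nabla' u$ is a multiple of $e_1$, i.e.\ $u_1 = |\nabla' u|$ and $u_\alpha = 0$ for $\alpha \geq 2$. Setting $\mathcal{A} = t^2 + u^2$, $\mathcal{B} = \mathcal{A} + |\nabla' u|^2$, and $s = \sqrt{\mathcal{A}/\mathcal{B}} \in (0,1]$, a direct computation gives $\partial_t(s^2) = 2t\,|\nabla' u|^2/\mathcal{B}^2 \geq 0$, so it suffices to show that $G^t[u]$ is non-decreasing in $s$.

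Using $\sin(\mbox{arccot}\, x) = 1/\sqrt{1+x^2}$, at $\rho = \zeta^t(u)$ one finds $\phi^t = 1/\sqrt{\mathcal{A}}$ and $(\zeta^t)'(u) = -1/\mathcal{A}$, whence
\[ \frac{-\phi^t (\zeta^t)'}{\sqrt{(\phi^t)^2 + ((\zeta^t)')^2 |\nabla' u|^2}} \,=\, \frac{1}{\sqrt{\mathcal{A}\mathcal{B}}}. \]
Substituting these into the defining formula for $(\gamma^t)^{ij}$ and using $|\nabla' u|^2 = \mathcal{B} - \mathcal{A}$ to telescope the bracket, the matrix $(\gamma^t)^{ij}$ collapses in this frame to the diagonal form $\sqrt{\mathcal{A}}\,(D_s)_{ij}$, where $D_s := \mbox{diag}(s,1,\ldots,1)$. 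Feeding everything back into the definition of $a^t_{ij}$ yields the compact identity
\[ a^t_{ij} \,=\, s\,(D_s M D_s)_{ij}, \qquad M_{kl} := \nabla'_{kl} u + u\,\delta_{kl}, \]
where $M$ is positive definite by the strict local convexity condition \eqref{eq3-17}, and \emph{independent of $t$}.

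With this structure I would compute $\sigma_k(a^t) = s^k\,\sigma_k(D_s M D_s)$ by summing $k\times k$ principal minors. For any $I \subset \{1,\ldots,n\}$ with $|I| = k$, the principal submatrix $(D_s M D_s)_I$ equals $\tilde D\, M_I\, \tilde D$, where $\tilde D$ is the $k\times k$ diagonal matrix having $s$ in the entry corresponding to index $1$ (when $1 \in I$) and $1$ otherwise; hence
\[ \det\bigl((D_s M D_s)_I\bigr) \,=\, s^{\,2\,\mathbf{1}_{\{1 \in I\}}}\,\det(M_I). \]
Summing over all $k$-subsets,
\[ \sigma_k(a^t) \,=\, s^{k+2}\alpha + s^k\beta, \qquad \alpha := \!\!\!\sum_{|I|=k,\,1\in I}\!\!\det(M_I), \qquad \beta := \!\!\!\sum_{|I|=k,\,1\notin I}\!\!\det(M_I), \]
both non-negative since every principal minor of a positive definite matrix is positive. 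Therefore $\partial_s\,\sigma_k(a^t) = (k+2)\,s^{k+1}\alpha + k\,s^{k-1}\beta \geq 0$, and combined with $\partial_t s \geq 0$ we conclude that $G^t[u] = \sigma_k(a^t)^{1/k}$ is non-decreasing in $t$.

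The main obstacle is algebraic rather than conceptual: one must carefully simplify the scalar prefactor and the matrix $(\gamma^t)^{ij}$ at $\rho = \zeta^t(u)$ in order to recognize the conjugation-by-$D_s$ structure. Once that reduction is in hand, the block-determinant identity for $D_s M D_s$ together with the positivity of principal minors of $M$ renders the monotonicity in $s$, and hence in $t$, immediate.
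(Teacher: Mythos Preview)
Your argument is correct, and it takes a genuinely different route from the paper's. The paper differentiates $G^t[u]$ directly: it writes $a^t_{ij}$ in the form $(1+|\nabla' u|^2/(u^2+t^2))^{-1/2}\,\tilde\gamma^{ik}(\nabla'_{kl}u+u\delta_{kl})\tilde\gamma^{lj}$, applies the product rule, and after computing $\partial_t\tilde\gamma^{ik}$ via $\partial_t\tilde\gamma_{ik}$ arrives at an expression of the shape $c(t)\,F^{ij}B_{iq}\tilde\gamma^{qk}(\nabla'_{kl}u+u\delta_{kl})\tilde\gamma^{lj}$ with $c(t)\ge 0$ and $B=|\nabla' u|^2 I + 2\nabla' u\otimes\nabla' u\ge 0$; non-negativity then follows by simultaneously diagonalizing $\{F^{ij}\}$ and $\{a^t_{ij}\}$ and using $\kappa_i>0$.

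Your approach sidesteps the $F^{ij}$ calculus entirely. By choosing a frame with $\nabla' u$ along $e_1$ you reduce $a^t$ to the transparent form $s\,D_s M D_s$ with $M$ positive definite and $t$-independent, and then exploit the principal-minor expansion of $\sigma_k$ to write $\sigma_k(a^t)=s^{k+2}\alpha+s^k\beta$ as a polynomial in the single variable $s$ with non-negative coefficients. This is more elementary and arguably more illuminating: it makes the $t$-dependence completely explicit and shows the result for every $k$ at once without invoking properties of $F$ or its derivatives. The paper's computation, on the other hand, is frame-independent and would adapt more readily to other symmetric curvature functions $f$, since it ultimately rests only on the simultaneous diagonalizability of $\{F^{ij}\}$ and $\{a_{ij}\}$ together with $f_i,\kappa_i>0$.
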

\begin{proof}
\[ a_{ij}^t = \,\Big(1 + \frac{|\nabla' u|^2}{u^2 + t^2} \Big)^{- \frac{1}{2}}\, \,\tilde{\gamma}^{ik}\,\big( \nabla'_{kl} u \,+\, u \,\delta_{kl} \big)\,\tilde{\gamma}^{lj} \]
where
\[ \tilde{\gamma}^{ik} \,=\, \delta_{ik} - \frac{u_i \,u_k}{\sqrt{u^2 + t^2 + |\nabla' u|^2} \,\big( \sqrt{u^2 + t^2} + \sqrt{u^2 + t^2 + |\nabla' u|^2} \big)} \]

\[ \begin{aligned}
\,\frac{\partial}{\partial t} \,G^t[ u ]  = \,& \Big( 1 + \frac{|\nabla' u|^2}{u^2 + t^2} \Big)^{- 3/2}F^{ij}  \cdot \\ &  \Big(  \frac{t\, |\nabla' u|^2}{(u^2 + t^2)^2}  \tilde{\gamma}^{ik} +  2 \Big( 1 + \frac{|\nabla' u|^2}{u^2 + t^2} \Big) \frac{\partial \tilde{\gamma}^{ik}}{\partial t} \Big) \big( \nabla'_{kl} u + u \delta_{kl} \big) \tilde{\gamma}^{lj}  \end{aligned} \]
The inverse $(\tilde{\gamma}_{ik})$ of $(\tilde{\gamma}^{ik})$ is given by
\[ \tilde{\gamma}_{ik} \,=\, \delta_{ik} + \frac{u_i \,u_k}{ u^2 + t^2 + \sqrt{ (u^2 + t^2)^2 + (u^2 + t^2) |\nabla' u|^2} }  \]
Since
\[  \frac{\partial \tilde{\gamma}^{ik}}{\partial t} \,=\, -  \tilde{\gamma}^{ip}\,\frac{\partial \tilde{\gamma}_{pq}}{\partial t} \, \tilde{\gamma}^{qk} \]
\[ \frac{\partial \tilde{\gamma}_{pq}}{\partial t} = \, - \frac{u_p\, u_q \,t}{(u^2 + t^2)^{3/2} \sqrt{u^2 + t^2 + |\nabla' u|^2}}\]
and
\[ \tilde{\gamma}^{ik} \,u_k \,=\, \frac{\sqrt{u^2 + t^2}}{\sqrt{u^2 + t^2 + |\nabla' u|^2}} \, u_i \]
therefore,
\[ \begin{aligned} \,\frac{\partial}{\partial t}\, G^t[ u ] = & \frac{t}{\sqrt{u^2 + t^2} \big( u^2 + t^2 + |\nabla' u|^2 \big)^{3/2} }    F^{ij} \cdot \\ & \Big( |\nabla' u|^2 \delta_{iq} + 2 u_i u_q \Big) \tilde{\gamma}^{qk}\, \big( \nabla'_{kl} u + u \delta_{kl} \big)\, \tilde{\gamma}^{lj} \geq 0
\end{aligned} \]
\end{proof}
Likewise, we define
\[
 \begin{aligned}
  \psi^t [ u ] := \,& \psi^t(z, u, \nabla' u) = \, \psi^t(V^t, \nu^t) \\ = \,& \psi^t\left( \frac{1}{\sqrt{u^2 + t^2}} \,z, \,\,\frac{\sqrt{u^2 + t^2}}{\sqrt{u^2 + t^2 + |\nabla' u|^2}} \Big( \nabla' u + z \Big) \right)
 \end{aligned} \]
Note that $z = \frac{\partial}{\partial\rho}$.

Recall that we have assumed a strictly locally convex subsolution.
\[\left\{
\begin{aligned}
G[\underline{u}] \,\,\geq & \,\,\psi(z, \underline{u}, \nabla' \underline{u})  \quad\quad &\mbox{in} \quad \Omega \\
\underline{u} \,\, = & \,\, \varphi   &\mbox{on} \quad \partial\Omega
\end{aligned} \right.
\]
Choose $\epsilon$ small such that
\[ \epsilon < \min \Big\{   \min\limits_{\overline{\Omega}} G^0[\underline{u}], \,\, \frac{1}{2}\,\min\limits_{[0, 1] \times \overline{\Omega}  \times [C_0^{-1}, C_0] \times \{ p \,\in \mathbb{R}^n \vert\, |p| \leq C_1 \}} \psi^t \Big\}  \]
By continuity, for $t \in [1 - \delta_1, \,1]$ where $\delta_1$ is a sufficiently small positive constant depending on $\epsilon$, we have
\begin{equation} \label{eq7-3}
\left\{
\begin{aligned}
G^t[\underline{u}] \,\,> & \,\, \psi^t [ \underline{u} ] - \,\frac{\epsilon}{2}  \quad\quad &\mbox{in} \quad \Omega \\
\underline{u} \,\, = & \,\, \varphi   &\mbox{on} \quad \partial\Omega
\end{aligned} \right.
\end{equation}
Denote $\mathcal{G}^t [\, v\, ] \,:= \mathcal{G}^t( \nabla'^2 v, \nabla' v, v )\, =: \, G^t [\, e^v \,]$. Consider the continuity process,
\begin{equation} \label{eq7-1}
\left\{ \begin{aligned}
\mathcal{G}^t [\, v\, ] \,  =  & \, \, \big( 1 - T(t) \big) \,\delta_2 \, e^{2 v} \, + \, T(t) \,\Big( \psi^t [\, e^v \,] - \epsilon \Big) \quad\quad & \mbox{in} \quad \Omega \\ v \,  = & \, \,\ln \varphi \quad \quad & \mbox{on} \quad \partial\Omega \end{aligned} \right.
\end{equation}
where $\delta_2$ is a small positive constant such that
\[ \delta_2  \,\max\limits_{\overline{\Omega}} \underline{u}^2 \,<\, \frac{\epsilon}{2} \]
and $T(t)$ is a smooth strictly increasing function with $T(0) = 0$,  $T(1) = 1$ satisfying
\[  \min\limits_{\overline{\Omega}} \,G^0 [\underline{u}] \, > \,2 \,\,T(1 - \delta_1)\, \,\max\limits_{ [0, 1] \times \overline{\Omega}}
\psi^t [ \underline{u}]  \]

\begin{prop}
$\underline{v} = \ln \underline{u}$ is a strict subsolution of \eqref{eq7-1} for any $t\in [0, 1]$.
\end{prop}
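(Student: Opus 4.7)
The plan is to verify, for every $t \in [0,1]$ and every $z \in \Omega$, the strict inequality
\[
G^t[\underline{u}](z) \,>\, \big(1-T(t)\big)\delta_2\,\underline{u}^2(z) \,+\, T(t)\big(\psi^t[\underline{u}](z) - \epsilon\big),
\]
which is exactly what strict subsolution means for \eqref{eq7-1} after noting $\mathcal{G}^t[\underline{v}] = G^t[e^{\underline{v}}] = G^t[\underline{u}]$. The boundary condition is immediate: $\underline{v} = \ln\underline{u} = \ln\varphi$ on $\partial\Omega$. I would split into two cases according to whether $t \in [0, 1-\delta_1]$ or $t \in [1-\delta_1, 1]$, and in each case use one of the three calibrations baked into the choice of $\epsilon$, $\delta_2$, and $T$.

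For $t \in [0, 1-\delta_1]$, since $T$ is increasing and $\psi^t[\underline{u}] - \epsilon < \psi^t[\underline{u}]$, the right-hand side is bounded above by $\delta_2 \max_{\overline{\Omega}}\underline{u}^2 + T(1-\delta_1)\max_{[0,1]\times\overline{\Omega}}\psi^t[\underline{u}]$. Using $\delta_2\max\underline{u}^2 < \epsilon/2$ and the defining inequality $\min_{\overline{\Omega}}G^0[\underline{u}] > 2\,T(1-\delta_1)\max\psi^t[\underline{u}]$, this bound is strictly less than $\epsilon/2 + \tfrac{1}{2}\min G^0[\underline{u}]$, which is strictly less than $\min G^0[\underline{u}]$ by the choice $\epsilon < \min_{\overline{\Omega}} G^0[\underline{u}]$. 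The Proposition just proved (monotonicity of $G^t[\underline{u}]$ in $t$) gives $G^t[\underline{u}](z) \geq G^0[\underline{u}](z) \geq \min G^0[\underline{u}]$, completing this case.

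For $t \in [1-\delta_1, 1]$, I would instead invoke the strict inequality \eqref{eq7-3}: $G^t[\underline{u}] > \psi^t[\underline{u}] - \epsilon/2$. The key observation is that the right-hand side of the subsolution inequality is a convex combination of $\delta_2\underline{u}^2$ and $\psi^t[\underline{u}] - \epsilon$, and from $\delta_2\underline{u}^2 < \epsilon/2$ together with $\epsilon < \tfrac{1}{2}\min\psi^t[\underline{u}]$ one sees that $\delta_2\underline{u}^2 < \epsilon < \psi^t[\underline{u}] - \epsilon$. The convex combination is therefore bounded by $\psi^t[\underline{u}] - \epsilon$, which is strictly smaller than $\psi^t[\underline{u}] - \epsilon/2 < G^t[\underline{u}]$.

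The only subtle point is the interplay of the four constants $\epsilon, \delta_1, \delta_2, T(1-\delta_1)$; the rest is just bookkeeping. The Proposition (monotonicity of $G^t$) carries the first case, and \eqref{eq7-3} carries the second; there is no genuine obstacle once the calibrations are arranged as the author has done.
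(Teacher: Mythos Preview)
Your proposal is correct and follows essentially the same approach as the paper: the same case split at $t = 1-\delta_1$, with monotonicity of $G^t$ plus the calibration of $T(1-\delta_1)$ handling the first case and \eqref{eq7-3} handling the second. The only cosmetic difference is that in the second case the paper bounds the right-hand side by the full sum $\delta_2\underline{u}^2 + (\psi^t[\underline{u}]-\epsilon)$ (both summands positive, coefficients at most one), whereas you bound the convex combination by its larger term; both routes land on $\psi^t[\underline{u}]-\epsilon/2$.
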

\begin{proof}
For $t \in [1 - \delta_1,  1]$,
\[ \begin{aligned}
\mathcal{G}^t [ \underline{v} ]  \,= &\, G^t [\underline{u}]  \, >   \, \psi^t [ \underline{u} ] - \frac{\epsilon}{2} \,> \, \delta_2  \, \underline{u}^2 +   \, \big( \psi^t [ \underline{u} ] - \epsilon \big) \\
\,\geq  & \, \big( 1 - T(t) \big) \, \delta_2  \, e^{2 \underline{v}} \, + \,  T(t)\,\, \big( \psi^t [ e^{\underline{v}} ] - \epsilon \big)
\end{aligned} \]
For $t \in [0, 1 - \delta_1]$,
\[ \begin{aligned}
\mathcal{G}^t [ \underline{v} ]  \,= & \, G^t [ \underline{u} ] \, \geq  \, G^0 [\underline{u}] \, > \, \frac{\epsilon}{2} \, +  \,T(1 - \delta_1) \, \psi^t [ \underline{u} ] \\ \geq &
 \,  \big( 1 - T(t) \big) \, \delta_2  \, \underline{u}^2 \, + \,  T(t)\,\, \big( \psi^t [ \underline{u} ] - \epsilon \big) \\ = &\, \big( 1 - T(t) \big) \, \delta_2  \, e^{2 \underline{v}} \, + \,  T(t)\,\, \big( \psi^t [ e^{\underline{v}} ] - \epsilon \big)
\end{aligned}
\]
\end{proof}

Now we can obtain the existence results in $\mathbb{S}^{n+1}_+$.

\begin{thm} \label{Theorem7-1}
For any $t \in [0, 1]$, the Dirichlet problem \eqref{eq7-1} has a strictly locally convex solution $v$ with $v \geq \underline{v}$ in $\Omega$. In particular, \eqref{eq7-2} has a strictly locally convex solution $u$ satisfying $u \geq \underline{u}$ in $\Omega$ when $K = 1$.
\end{thm}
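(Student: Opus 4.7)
The plan is to follow the degree-theoretic strategy of Theorem \ref{Theorem6-2}. The rationale for the particular continuation \eqref{eq7-1} is that at $t=0$ the background metric $\bar g^0$ is the Euclidean metric, so the equation reduces to $\mathcal G^0[v]=\delta_2 e^{2v}$ with boundary value $\ln\varphi$, which is (modulo the harmless replacement of $\epsilon$ by $\delta_2$) the auxiliary Dirichlet problem \eqref{eq6-8} at $t=1$ in the case $K=0$. By the same argument as Theorem \ref{Theorem6-1}, it admits a unique strictly locally convex solution $v^0\geq\underline v$ in $\overline\Omega$, and by Lemma \ref{Lemma6-1} its linearized operator is invertible. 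The endpoint $t=1$ of \eqref{eq7-1} is the desired equation \eqref{eq7-2} after $u=e^v$, so all that is needed is to transport a nonvanishing degree from $t=0$ to $t=1$.

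First I would establish $t$-uniform a priori bounds for strictly locally convex solutions $v$ of \eqref{eq7-1} with $v\geq\underline v$. The $C^1$ estimates of Lemma \ref{Lemma5-1}, transplanted through $u=e^v$, apply in each ambient $(\mathbb R^{n+1},\bar g^t)$; the $C^2$ boundary and global curvature estimates of Sections 3 and 4 were stated for a single space form but depend on constants varying continuously with $K^t=t^2\in[0,1]$, so after checking that the barrier $\Psi=Av+B\rho^2$ and the test function $\Theta=\tfrac12\ln P(\kappa)-N\ln\tau+\beta\Phi$ are controlled uniformly in $t$, one obtains $t$-independent $C^{2,\alpha}$ and $C^{4,\alpha}$ bounds via Evans--Krylov and Schauder theory together with a uniform positive lower bound on the principal curvatures. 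Next I would establish, mirroring Lemma \ref{Lemma6-2}, the comparison $v\geq\underline v$: at an interior maximum of $\underline v-v$ one interpolates $v[s]=(1-s)v+s\underline v$ and verifies it remains strictly locally convex (using the formula for $a_{ij}^t$ together with $(\eta/\eta')'\leq 0$), then differentiates the equation in $s$ at a critical point and derives a contradiction via the natural extension of Lemma \ref{Lemma6-1} to $\mathcal G^t$ and to the $\nabla'v$-dependent right-hand side. The strict separation $v>\underline v$ in $\Omega$ and $\nabla'_{\mathbf n}(v-\underline v)>0$ on $\partial\Omega$ then follows exactly as in Lemma \ref{Lemma6-3} by applying the strong maximum principle and Hopf lemma to the linear elliptic inequality satisfied by $v-\underline v$.

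With these ingredients I would define the bounded open set $\mathcal O\subset C_0^{4,\alpha}(\overline\Omega)$ and the map
\[\mathcal M_t(w)=\mathcal G^t[\underline v+w]-(1-T(t))\delta_2 e^{2(\underline v+w)}-T(t)\bigl(\psi^t[e^{\underline v+w}]-\epsilon\bigr)\]
in direct analogy with Theorem \ref{Theorem6-2}. The uniform estimates and strict separation guarantee that $\mathcal M_t(w)=0$ has no solution on $\partial\mathcal O$ for any $t\in[0,1]$, so $\deg(\mathcal M_t,\mathcal O,0)$ is well defined and $t$-independent. At $t=0$ the unique zero $w^0=v^0-\underline v$ lies in $\mathcal O$ and has invertible Fr\'echet derivative by Lemma \ref{Lemma6-1} (applied with $K=0$, $\xi(v)=e^{2v}$, $\psi=\delta_2$), giving degree $\pm 1\neq 0$; hence \eqref{eq7-1} is solvable at $t=1$. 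Letting $\epsilon\to 0$ and extracting a $C^{2,\alpha}$ convergent subsequence via the $\epsilon$-independent estimates of Sections 3 and 4 produces a strictly locally convex solution of the original problem on $\mathbb S_+^{n+1}$. The principal technical obstacle I anticipate is making the $t$-uniformity fully rigorous: both the boundary barriers of Section 3 and the curvature test function of Section 4 involve the background quantities $\phi^t$, $\zeta^t$, $\tau$ and the sectional curvature $K^t$, whose behavior must be tracked carefully as the ambient geometry degenerates from $\mathbb S_+^{n+1}$ back to $\mathbb R^{n+1}$, so that no constant blows up along the deformation and the strict-convexity-preserving interpolation argument in the comparison step remains valid uniformly in $t$.
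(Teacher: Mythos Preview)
Your proposal is correct and follows essentially the same degree-theoretic route as the paper: identify the $t=0$ problem with the $K=0$ auxiliary equation, invoke Theorem~\ref{Theorem6-1} and Lemma~\ref{Lemma6-1} there, verify $t$-uniform $C^{4,\alpha}$ and convexity bounds, and transport the nonzero degree to $t=1$.

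One point deserves adjustment. You propose to extend Lemma~\ref{Lemma6-2} (and hence Lemma~\ref{Lemma6-1}) to the full family \eqref{eq7-1} for every $t\in[0,1]$, handling the $\nabla' v$-dependent term $\psi^t[e^v]$. This extension is both unnecessary and not straightforward: the proof of Lemma~\ref{Lemma6-1} uses the precise structure $\psi(z)\xi(v)$ to obtain a sign on $\mathcal G_v-\psi\xi'$, and the term $T(t)\,\partial_v\bigl(\psi^t[e^v]\bigr)$ carries no such sign in general. The paper avoids this entirely: comparison is invoked \emph{only} at $t=0$, where the equation is exactly the $K=0$ auxiliary problem and Lemma~\ref{Lemma6-2} applies verbatim to place $w^0$ in $\mathcal O$. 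For $t\in(0,1]$ no comparison is needed, because membership in $\overline{\mathcal O}$ already forces $w\geq 0$, and then Lemma~\ref{Lemma6-3} (whose proof is a linear maximum-principle argument, insensitive to the specific right-hand side) excludes the boundary cases $w=0$ somewhere or $\nabla'_{\bf n}w=0$ somewhere. So drop the attempted extension of Lemmas~\ref{Lemma6-1}--\ref{Lemma6-2} and rely on the degree-theoretic structure of $\mathcal O$ instead.
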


\begin{proof}
The $C^{2, \alpha}$ estimates for strictly locally convex solutions $v$ of \eqref{eq7-1} with $v \geq \underline{v}$ is equivalent to the $C^{2, \alpha}$ estimates for strictly locally convex solutions $u$ with $u \geq \underline{u}$ to the Dirichlet problem
\begin{equation} \label{eq6-21}
\left\{ \begin{aligned}
G^t [\, u\, ] \,  =  & \, \, \big( 1 - T(t) \big) \,\delta_2 \, u^2 \, + \, T(t) \,\big( \psi^t [ u ] - \epsilon \big) \quad\quad & \mbox{in} \quad \Omega \\ u \,  = & \, \,\varphi \quad \quad & \mbox{on} \quad \partial\Omega \end{aligned} \right.
\end{equation}
which can be established by changing $\phi$ and $\zeta$ into $\phi^t$ and $\zeta^t$ in section 3, 4.  Then $C^{4, \alpha}$ estimate follows by classical Schauder theory. Thus we have the $t$-independent uniform estimates,
\begin{equation} \label{eq7-4}
\Vert v \Vert_{C^{4,\alpha}(\overline{\Omega})} < C_4 \quad \quad \mbox{and} \quad \quad
C_2^{-1}\, I <   \,\{ v_{ij} \,+\,v_i \, v_j\, + \,\delta_{ij} \} \,  < C_2\, I    \quad \mbox{in} \quad \overline{\Omega}
\end{equation}
Consider the subspace of $C^{4,\alpha}( \overline{\Omega} )$ given by
\[ C_0^{ 4, \alpha} (\overline{\Omega}) := \{ w \in C^{ 4, \alpha}( \overline{\Omega} ) \,| \,w = 0 \,\, \mbox{on} \,\, \partial\Omega \} \]
and the bounded open subset
\[ \mathcal{O} := \left\{ w \in C_0^{4, \alpha} (\overline{\Omega}) \,\left\vert\,\begin{footnotesize}\begin{aligned} & w > 0 \,\,\mbox{in}\,\,\Omega, \quad\quad \nabla'_{\bf n}\, w > 0 \,\,\mbox{on}\,\, \partial\Omega,\\ & C_2^{-1} \,I <  \, \{ (\underline{v} + w)_{ij} \,+\,(\underline{v} + w)_i \, (\underline{v} + w)_j\, + \,\delta_{ij} \} \,  < C_2 \, I   \,\, \mbox{in} \,\, \overline{\Omega} \\ & \Vert w {\Vert}_{C^{4,\alpha}(\overline{\Omega})} < C_4 + \Vert\underline{v}\Vert_{C^{4,\alpha}(\overline{\Omega})} \end{aligned}\end{footnotesize} \right.\right\} \]
Construct a map
$\mathcal{M}_t (w):  \,\mathcal{O} \times [ 0, 1 ] \rightarrow C^{2,\alpha}(\overline{\Omega})$,
\[ \mathcal{M}_t ( w ) = \mathcal{G}^t [ \underline{v} + w ] \, - \big( 1 - T(t) \big) \,\delta_2 \, e^{2 (\underline{v} + w )} - \, T(t) \, \Big( \psi^t [ e^{\underline{v} + w} ] - \epsilon \Big)\]
At $t = 0$, by Theorem \ref{Theorem6-1} for the case $K = 0$, there is a unique solution $v^0$ to
\eqref{eq7-1}.
By Lemma \ref{Lemma6-2} and Lemma \ref{Lemma6-3} we have $w^0 := v^0 - \underline{v}> 0$ in $\Omega$ and $\nabla'_{\bf n}\, w^0 > 0$ on $\partial\Omega$.  Moreover, $v^0$ satisfies \eqref{eq7-4} and thus $w^0 \in \mathcal{O}$.
Also, Lemma \ref{Lemma6-3} and  \eqref{eq7-4} implies that $\mathcal{M}_t( w ) = 0$ has no solution on $\partial\mathcal{O}$ for any $t \in [0, 1]$.
Besides, $\mathcal{M}_t$ is uniformly elliptic on $\mathcal{O}$ independent of $t$. Therefore,
$\deg (\mathcal{M}_t, \mathcal{O}, 0)$, the degree of $\mathcal{M}_t$ on $\mathcal{O}$ at $0$,
is well defined and independent of $t$.
Hence it suffices to compute $\deg (\mathcal{M}_0, \mathcal{O}, 0)$.

Note that $\mathcal{M}_0 ( w ) = 0$ has a unique solution $w^0 \in \mathcal{O}$. The Fr\'echet derivative of $\mathcal{M}_0$ with respect to $w$ at $w^0$ is a linear elliptic operator from $C^{4, \alpha}_0 (\overline{\Omega})$ to $C^{2, \alpha}(\overline{\Omega})$,
\[
\mathcal{M}_{0, w} |_{w^0} ( h )  =  \,  (\mathcal{G}^0)^{ij}[ v^0 ] \,\nabla'_{ij} h  + (\mathcal{G}^0)^i [ v^0 ] \, \nabla'_i h   + \big( (\mathcal{G}^0)_v [v^0]  - \,2 \,\delta_2 \, e^{2 v^0} \big)\, h
\]
By Lemma \ref{Lemma6-1}
\[ (\mathcal{G}^0)_v [v^0]  - \,2 \,\delta_2 \, e^{2 v^0}\, < 0 \quad\mbox{in} \quad \Omega \]
Thus $\mathcal{M}_{0,w} |_{w^0}$ is invertible. Applying the degree theory in \cite{Li89},
\[ \deg (\mathcal{M}_0, \mathcal{O}, 0) = \deg( \mathcal{M}_{0, w} |_{w^0}, B_1, 0) = \pm 1 \neq 0 \]
where $B_1$ is the unit ball in $C_0^{4,\alpha}(\overline{\Omega})$. Thus
\[ \deg(\mathcal{M}_t, \mathcal{O}, 0) \neq 0 \quad\mbox{for}\,\,\mbox{all}\,\,t \in [0, 1]\]
and this theorem is proved.
\end{proof}

\vspace{4mm}

\section{Appendix: Proof of Lemma \ref{Lemma6-3}}

\vspace{4mm}

\begin{proof}
Recall that we have assumed that $\underline{v}$ is not a solution of \eqref{eq2-26}. By \eqref{eq2-29} and \eqref{eq6-20} we know that $\underline{v}$ is a strict subsolution of \eqref{eq6-9} when $t \in [0, 1)$, while it is a subsolution but not a solution of \eqref{eq6-9} when $t = 1$. It is relatively easy to prove the conclusion when $t \in [0, 1)$, following the ideas in \cite{Su16}. For the case $t = 1$:
\begin{equation*}
\left\{ \begin{aligned} \mathcal{G} [ v ] \,\, =  & \, \,  \psi(z, v, \nabla' v) \quad & \mbox{in} \quad \Omega \\
 v \,\,  = & \,\, \underline{v} \quad \quad & \mbox{on} \quad \partial\Omega \end{aligned} \right.
\end{equation*}
we will make use of the maximum principle which was originally discovered in \cite{Serrin}, while more precisely stated for our purposes in \cite{GNN} (see section 1.3, p. 212). Because the maximum principle and Hopf lemma there are designed for domains in Euclidean spaces, we need to rewrite the above equation in a local coordinate system of $\mathbb{S}^n$. For keeping the strict local convexity of the variations in our proof, we first transform the above equation back under the transformation \eqref{eq6-2} into a form as \eqref{eq2-13}:
\begin{equation} \label{eq5-7}
\left\{ \begin{aligned}
G(\nabla'^2 u, \nabla' u, u) \,\,= & \,\,\psi(z, u, \nabla' u)  \quad \quad  &\mbox{in} \quad & \Omega \\
u \,\, = & \,\,\underline{u}  \quad \quad & \mbox{on} \quad & \partial\Omega \end{aligned} \right.
\end{equation}
Recall that $G(\nabla'^2 u, \nabla' u, u) = F(A[u])$ where $A[u] = \{ \gamma^{ik} h_{kl} \gamma^{lj} \}$. Since at this time we do not use local orthonormal frame on $\mathbb{S}^n$, but rather a local coordinate system of $\mathbb{S}^n$, $\gamma^{ik}$ and $h_{kl}$ will appear different from \eqref{eq3-10} and \eqref{eq3-13}.

Meanwhile, the subsolution assumption \eqref{eq2-29} (i.e. \eqref{eq1-4}) can be rewritten as
\begin{equation*}
\left\{ \begin{aligned}
G(\nabla'^2 \underline{u}, \nabla' \underline{u}, \,\underline{u}) \geq & \,\,\psi(z, \underline{u}, \nabla' \underline{u})  \quad  &\mbox{in} \quad \Omega\\
\underline{u} = & \,\,\varphi  \quad \quad &\mbox{on} \quad \partial \Omega \end{aligned} \right.
\end{equation*}
Note that $\underline{u}$ is not a solution of \eqref{eq5-7}.

(i) We first show that if a strictly locally convex solution $u$ of \eqref{eq5-7} satisfies $u \geq \underline{u}$ in $\Omega$, then $u > \underline{u}$ in $\Omega$. Let $N \notin \Omega$ be the north pole of $\mathbb{S}^n$. Take the radial projection of $\mathbb{S}^n \setminus \{N\}$ onto $\mathbb{R}^n \times \{ -1 \} \subset \mathbb{R}^{n+1}$ and let $\tilde{\Omega}$ be the image of $\Omega$. We thus have a coordinate system $(x_1, \ldots, x_n)$ on $\mathbb{R}^n \times \{ -1 \} \cong \mathbb{R}^n$.  The metric on $\mathbb{S}^n$, its inverse, and the Christoffel symbols are given respectively by
\[ \sigma_{ij} = \frac{16}{\mu^2} \,\delta_{ij}, \quad \mu = 4 + \sum x_i^2, \quad \sigma^{ij} = \frac{\mu^2}{16}\, \delta_{ij} \]
\[ {\Gamma}_{ij}^k = - \frac{2}{\mu} \,(\delta_{ik} x_j + \delta_{jk} x_i - \delta_{ij} x_k) \]
Consequently, the metric on $\Sigma$, its inverse and the second fundamental form on $\Sigma$ are given respectively by (c.f. \cite{SX15})
\begin{equation*}
g_{ij} = \phi^2 \,\sigma_{ij} + \zeta'^2(u)\, u_i u_j
\end{equation*}
\begin{equation*}
g^{ij} = \frac{1}{\phi^2} \Big( \sigma^{ij} - \frac{\zeta'^2(u) u^i u^j}{\phi^2 + \zeta'^2(u) |\nabla' u|^2} \Big), \quad u^i = \sigma^{ik} u_k
\end{equation*}
\begin{equation*}
h_{ij} =  \frac{- \zeta'(u) \phi}{\sqrt{\phi^2 + \zeta'^2(u) |\nabla' u|^2}} ( \nabla'_{ij} u + u \,\sigma_{ij} )
\end{equation*}
The entries of the symmetric matrices $\{\gamma_{ik}\}$ and $\{\gamma^{ik}\}$ depend only on $x_1, \ldots, x_n$, $u$ and the first derivatives of $u$.

Now, set $\tilde{u} = \mu u$. By straightforward calculation,
\begin{equation} \label{eq5-10}
\nabla'_{ij} u + u \,\sigma_{ij} = \frac{1}{\mu} \tilde{u}_{ij} + \frac{2 \delta_{ij}}{\mu^2} \Big( \tilde{u} - \sum\limits_k x_k \tilde{u}_k \Big)
\end{equation}
As a result, \eqref{eq5-7} can be transformed into the following form:
\begin{equation*}
\left\{ \begin{aligned}
\tilde{G}(D^2\tilde{u}, D \tilde{u}, \tilde{u}, x_1, \ldots, x_n)  = F\Big( A\Big[ \frac{\tilde{u}}{\mu} \Big]\Big)\, = & \,\tilde{\psi}(x_1, \ldots, x_n, \tilde{u}, D \tilde{u})  \quad   &\mbox{in} \quad & \tilde{\Omega} \\
\tilde{u} \,\, = & \, \mu \,\underline{u}  \quad  & \mbox{on} \quad & \partial\tilde{\Omega} \end{aligned} \right.
\end{equation*}
where $\tilde{u}_i = \frac{\partial\tilde{u}}{\partial x_i}$, $D \tilde{u} = ( \tilde{u}_1, \ldots, \tilde{u}_n )$, $\tilde{u}_{ij} = \frac{\partial^2 \tilde{u}}{\partial x_i \partial x_j}$ and $D^2 \tilde{u} = \{ \tilde{u}_{ij} \}$.

Meanwhile, $\tilde{\underline{u}} := \mu \,\underline{u}$ satisfies
\begin{equation*}
\left\{ \begin{aligned}
\tilde{G}(D^2 \tilde{\underline{u}}, D \tilde{\underline{u}}, \tilde{\underline{u}}, x_1, \ldots, x_n) \,\,\geq & \,\,\tilde{\psi}(x_1, \ldots, x_n, \tilde{\underline{u}}, D \tilde{\underline{u}})  \quad  &\mbox{in} \quad \tilde{\Omega}\\
\tilde{\underline{u}}\,\, = & \,\,\mu \, \underline{u}  \quad \quad & \mbox{on} \quad \partial \tilde{\Omega} \end{aligned} \right.
\end{equation*}
Subtract the above two,
\[ \mathcal{L} \big( \tilde{\underline{u}} - \tilde{u} \big) : =  a_{ij} (x) \,(\tilde{\underline{u}} - \tilde{u})_{ij} + b_i(x) \, (\tilde{\underline{u}} - \tilde{u})_{i} + c(x) \,(\tilde{\underline{u}} - \tilde{u}) \geq 0 \]
where $ x : = ( x_1, \ldots, x_n )$,
\[ a_{ij} (x)  = \int_0^1 \tilde{G}^{ij}\, d s, \quad\quad b_i(x) = \int_0^1 \big( \tilde{G}^i - \tilde{\psi}^i \big) \,d s, \quad\quad c(x) = \int_0^1 \big( \tilde{G}_{\tilde{u}} - \tilde{\psi}_{\tilde{u}} \big) \, d s,  \,\]
\[ \tilde{G}^{ij} : = \frac{\partial \tilde{G}}{\partial \tilde{u}_{ij}} \Big(D^2\tilde{u} + s\,D^2 (\tilde{\underline{u}} -\tilde{u}),\, D \tilde{u} + s\, D(\tilde{\underline{u}} - \tilde{u}), \,\tilde{u} + s\, (\tilde{\underline{u}} - \tilde{u}),\, x_1, \ldots, x_n \Big),   \]
\[ \tilde{\psi}^i : = \frac{\partial \tilde{\psi}}{ \partial \tilde{u}_i }\Big(x_1, \ldots, x_n, \,\tilde{u} + s \,(\tilde{\underline{u}} -\tilde{u}), \, D \tilde{u} + s \, D (\tilde{\underline{u}} -\tilde{u}) \Big), \]
\[ \mbox{and} \quad \tilde{G}^i = \frac{\partial\tilde{G}}{\partial \tilde{u}_i}, \quad  \tilde{G}_{\tilde{u}} = \frac{\partial\tilde{G}}{\partial \tilde{u}}, \quad\tilde{\psi}_{\tilde{u}} = \frac{\partial\tilde{\psi}}{\partial \tilde{u}} \quad  \mbox{can} \,\,\mbox{be} \,\,\mbox{defined}\,\, \mbox{similarly}.\]
In view of \eqref{eq5-10} and \eqref{eq2-14} we know that
\[ \tilde{G}^{ij} = \frac{\partial \tilde{G}}{\partial \tilde{u}_{ij}} = \frac{\partial F}{\partial a_{kl}} \,\frac{\partial a_{kl}}{\partial \tilde{u}_{ij}} = \frac{1}{\mu} \, \frac{\partial G}{\partial u_{ij}}  \]
Hence the linearized operator $\mathcal{L}$ is uniformly elliptic. Besides, its coefficients are uniformly bounded, which can be seen from the algebraic fact
\[ \Big( \frac{\partial \gamma^{11}}{\partial A}, \ldots, \frac{\partial \gamma^{1n}}{\partial A}, \ldots, \frac{\partial \gamma^{nn}}{\partial A} \Big)\, = \, \Big( \frac{\partial g^{11}}{\partial A}, \ldots, \frac{\partial g^{1n}}{\partial A}, \ldots, \frac{\partial g^{nn}}{\partial A} \Big) \big( I \otimes g^{-\frac{1}{2}} + g^{-\frac{1}{2}} \otimes I \big)^{-1}\]
where $g^{-\frac{1}{2}} = \{ \gamma^{ik} \}$ and $A$ can be $\tilde{u}$ or $\tilde{u}_i$ with $i = 1, \ldots, n$.

Therefore, we can apply the Maximum Principle (see p. 212 of \cite{GNN}) to conclude that $\tilde{u} > \tilde{\underline{u}}$ in $\tilde{\Omega}$, which immediately yields $u > \underline{u}$ in $\Omega$.

(ii) To prove ${\bf n}(u - \underline{u}) > 0$ on $\partial\Omega$, we pick an arbitrary point $z_0 \in \partial\Omega$ and assume $z_0$ to be the north pole of $\mathbb{S}^n \subset \mathbb{R}^{n + 1}$. We introduce a local coordinate system about $z_0$ by taking the radial projection of the upper hemisphere onto the tangent hyperplane of $\mathbb{S}^n$ at $z_0$ and identifying this hyperplane to $\mathbb{R}^n$. Denote the coordinates by $(y_1, \ldots, y_n)$  and assume that the positive $y_n$-axis is the interior normal direction to $\partial\Omega \subset \mathbb{S}^n$ at $z_0$. In this coordinate system, the metric on $\mathbb{S}^n$, its inverse, and the Christoffel symbols are given respectively by (see \cite{Oliker, GS93})
\[ \sigma_{ij} = \frac{1}{\mu^2} \Big( \delta_{ij} - \frac{y_i y_j}{\mu^2} \Big), \quad \mu = \sqrt{1 + \sum y_i^2}  \]
\[ \sigma^{ij} = \mu^2 ( \delta_{ij} + y_i y_j ) \]
\[ {\Gamma}_{ij}^k = - \frac{\delta_{ik} y_j + \delta_{jk} y_i}{\mu^2} \]
The metric $g_{ij}$, its inverse $g^{ij}$ and the second fundamental form $h_{ij}$ on $\Sigma$ have the form as above.
The entries of the symmetric matrices $\{\gamma_{ik}\}$ and $\{\gamma^{ik}\}$  depend only on $y_1, \ldots, y_n$, $u$ and the first derivatives of $u$.

Now set $\tilde{u} = \mu u$. By straightforward calculation we have
\begin{equation} \label{eq5-8}
\nabla'_{ij} u + u \,\sigma_{ij} = \mu^{- 1} \tilde{u}_{ij}
\end{equation}
Equation \eqref{eq5-7} can be transformed into an equation defined in an open neighborhood of $0$ on $\mathbb{R}^n$, which is the radial projection of a neighborhood of $z_0$ on $\mathbb{S}^n$:
\begin{equation*}
\tilde{G}(D^2\tilde{u}, D \tilde{u}, \tilde{u}, y_1, \ldots, y_n)  =  F\Big( A\Big[ \frac{\tilde{u}}{\mu} \Big]\Big) =  \,\tilde{\psi}(y_1, \ldots, y_n, \tilde{u}, D \tilde{u})
\end{equation*}
where $\tilde{u}_i = \frac{\partial\tilde{u}}{\partial y_i}$, $D \tilde{u} = ( \tilde{u}_1, \ldots, \tilde{u}_n )$, $\tilde{u}_{ij} = \frac{\partial^2 \tilde{u}}{\partial y_i \partial y_j}$ and $D^2 \tilde{u} = \{ \tilde{u}_{ij} \}$.

In view of \eqref{eq5-8} and \eqref{eq2-14} we know that
\[ \frac{\partial \tilde{G}}{\partial \tilde{u}_{ij}} = \frac{\partial F}{\partial a_{kl}} \,\frac{\partial a_{kl}}{\partial \tilde{u}_{ij}} = \frac{1}{\mu} \, \frac{\partial G}{\partial u_{ij}}  \]
By (i) and applying Lemma H (see p. 212 of \cite{GNN}) we find that $(\tilde{u} - \tilde{\underline{u}})_n (0) > 0$ and equivalently ${\bf n}(u - \underline{u}) (z_0) > 0$.

\end{proof}

\vspace{2mm}

\end{document}